\documentclass[reqno, 11pt]{amsart}

\usepackage{amsmath}
\usepackage{amsthm}
\usepackage{amssymb}
\usepackage{mathrsfs}
\usepackage{hyperref}
\usepackage[utf8]{inputenc}
\usepackage{dsfont}
\usepackage{enumitem}
\usepackage{mathabx}

\numberwithin{equation}{section}

\newtheorem{thm}{Theorem}[section]
\newtheorem{lem}[thm]{Lemma}

\theoremstyle{definition}
\newtheorem{Ass}[thm]{Assumption}
\newtheorem{rem}[thm]{Remark}
\newtheorem{cor}[thm]{Corollary}

\title[SVGD dynamics For highly concentrated kernels]{Stein Variational Gradient Descent dynamics for highly concentrated kernels}

\author{José A. Carrillo}
\address{{\it José A. Carrillo:} Mathematical Institute, University of Oxford, Woodstock Road, Oxford, OX2 6GG, United Kingdom}
\email{jose.carrillo@maths.ox.ac.uk}

\author{Jakub Skrzeczkowski}
\address{{\it Jakub Skrzeczkowski: } St John's College, University of Oxford, St Giles, Oxford, OX1 3JP, United Kingdom \& Mathematical Institute, University of Oxford, Woodstock Road, Oxford, OX2 6GG, United Kingdom}
\email{jakub.skrzeczkowski@maths.ox.ac.uk}

\author{Jethro Warnett}
\address{{\it Jethro Warnett: } Mathematical Institute, University of Oxford, Woodstock Road, Oxford, OX2 6GG, United Kingdom}
\email{warnett@maths.ox.ac.uk}

\begin{document}
    
    \keywords{Stein variational gradient descent; Non-local to local; Sampling; Bayesian inference}
    
    \subjclass{35Q62, 35Q68, 35B40, 62-08, 62D05}
    
    \maketitle
    
    \begin{abstract}
        Stein Variational Gradient Descent (SVGD) is a widely used in practice algorithm for scalable sampling with deterministic particle updates. We study its behavior in the singular limit where the kernel bandwidth tends to zero. In this regime, we show that the nonlocal SVGD dynamics converge to a local evolution equation that can be formally interpreted as a Wasserstein gradient flow with quadratic mobility. We analyze this singular limit in two settings: integrable kernels and weighted kernels. In the weighted case, the proof is supported by recently established Stein-log-Sobolev inequalities, which provide the necessary functional control. Overall, our results clarify how SVGD collapses from a nonlocal interacting particle system to a local gradient-flow dynamics as the kernel concentrates.
    \end{abstract}
    
    \setcounter{tocdepth}{1}

    
    \section{Introduction}
    Stein Variational Gradient Descent (SVGD) is a deterministic particle method for approximating a target distribution $\rho_\infty\,\propto\,e^{-V}$ by transporting a collection of particles so as to decrease the Kullback--Leibler divergence to $\rho_\infty$ (also known as the relative entropy) most rapidly within a reproducing-kernel Hilbert space (RKHS) \cite{liu_wang_2016_stein}. Each discrete SVGD update is obtained by projecting the functional gradient of the $\mathrm{KL}$ divergence onto a Stein-admissible vector field in an RKHS. Specifically, the update uses a positive-definite interaction kernel $K:\mathbb{R}^{d}\times\mathbb{R}^{d}\rightarrow\mathbb{R}$ which both regularizes the particle interaction and defines the RKHS geometry used for descent. The SVGD algorithm is an effective deterministic sampler and has motivated a variety of applications including amortized sampling and generator training \cite{feng_wang_liu_2017_learning}, energy-based reinforcement learning \cite{haarnoja_tang_abbeel_levine_2017_reinforcement}, model-predictive control \cite{lambert_ramos_boots_fox_fishman_2021_stein}, constrained or trust-aware sampling \cite{liu_tong_liu_2021_sampling}, and interacting-particle methods for quantization and measure approximation \cite{xu_korba_slepcev_2022_accurate}.\\
    
    Taken to the infinite-particle limit, the empirical particle system of SVGD formally converges to a nonlinear mean-field PDE. Writing $\rho_t$ for the time density of the continuum limit, the mean-field SVGD dynamics can be written as the continuity equation
    \begin{equation}\label{eq:PDE_SVGD_main_intro}
        \partial_t \rho_t(x) = -\mathrm{div}\big( \rho_t(x)\, v[\rho_t](x)\big),
    \end{equation}
    with the kernelized velocity field
    \begin{align*}
        v[\rho](x) = \int_{\mathbb{R}^{d}} \Big( K(x,y)\nabla \log\rho_{\infty}(y) + \nabla_y K(x,y)\Big)\rho(y)\,\mathrm{d} y,
    \end{align*}
    so that the particle interpretation $\dot X_t = v[\rho_t](X_t)$ matches the empirical SVGD update in the large-population limit. This mean-field description has been developed and rigorously justified in several works, including the gradient-flow / KSD-decay viewpoint in \cite{liu_2017_stein}, the scaling-limit and well-posedness results in \cite{lu_lu_nolen_2019_scaling}, non-asymptotic population-limit analyses \cite{korba_salim_arbel_luise_gretton_2021_non}, kernelized Wasserstein-gradient-flow interpretations \cite{chewi_legouic_lu_maunu_rigollet_2020_svgd}, and further convergence and stability improvements and geometric perspectives in \cite{salim_sun_richtarik_2022_convergence,sun_karagulyan_richtarik_2023_convergence,duncan_nusken_szpruch_2023_geometry,carrillo_skrzeczkowski_2023_convergence,he_balasubramanian_sriperumbudur_lu_2024_regularized}. Recently, there was some work done for the finite particle regime in \cite{banerjee_balasubramanian_ghosal_2025_improved,he_balasubramanian_banerjee_ghosal_2026_finite}.\\
    
    This paper investigates the behavior of the mean-field PDE \eqref{eq:PDE_SVGD_main_intro} when the interaction kernel is translation invariant and becomes increasingly concentrated near the origin. This problem was motivated in \cite{lu_lu_nolen_2019_scaling}. Concretely, let
    \begin{equation}\label{eq:general_form_kernel_general_weights}
        K_\sigma(x,y)=w(x)\,k_\sigma(x-y)\,w(y), \qquad k_\sigma(z)=\sigma^{-d}k(z/\sigma),
    \end{equation}
    denote a family of translation-invariant kernels with $k_\sigma$ concentrating to a Dirac mass as $\sigma\to 0$ (here, $k$ is a fixed, integrable function). Our goal is to understand whether, as $\sigma\to 0$, the nonlocal mean-field PDEs \eqref{eq:PDE_SVGD_main_intro} with $K := K_\sigma$ converge in a suitable topology to a well-defined local limit. Furthermore, we want to identify the limiting equation and study its analytic properties. \\
    
    To be more precise, we will consider two type of PDEs
    \begin{align}
        \label{eq:mf_svgd}
        \begin{split}
            \partial_t \varrho&=\mathrm{div}\Big(\varrho\;  (k_\sigma\ast (\nabla \varrho+\varrho\,\nabla V))\Big)\\
            \varrho(0,x) &= \varrho_0(x)
        \end{split}
        \begin{split}
            & \mbox{ on } (0,\infty) \times \mathbb{R}^{d}, \\
            &\mbox{ on }\mathbb{R}^{d},
        \end{split}
    \end{align}
    and
    \begin{align}
        \label{eq:mf_slsi_svgd}
        \begin{split}
            \partial_t \rho&=\mathrm{div}\Big(\rho\; e^{V-\frac{\mathbb{V}}{2}}\; \big(k_\sigma\ast (\nabla \left( \rho\, e^{V} \right) e^{-\frac{\mathbb{V}}{2}})\big)\Big)\\
            \rho(0,x) &= \rho_0(x)
        \end{split}
        \begin{split}
            & \mbox{ on } (0,\infty) \times \mathbb{R}^{d}, \\
            &\mbox{ on }\mathbb{R}^{d}.
        \end{split}
    \end{align}
    Both equations \eqref{eq:mf_svgd} and \eqref{eq:mf_slsi_svgd} use the kernel in \eqref{eq:general_form_kernel_general_weights}, where the first uses the weight $w=1$, and the second uses the weight
    \begin{align}
        \label{eq:slsi_weight}
        w(x)=e^{V(x)-\frac{\mathbb{V}(x)}{2}},
    \end{align}
    where $\mathbb{V}$ is a strictly convex quadratic polynomial such that $V\geq \mathbb{V}$. In both cases, $k_\sigma$ is always assumed to be of the form
    \begin{align*}
        k_\sigma = \omega_\sigma\ast\omega_\sigma,
    \end{align*}
    with some further assumptions to be specified later. The first equation is a standard formulation of the continuous SVGD method \cite{lu_lu_nolen_2019_scaling,duncan_nusken_szpruch_2023_geometry,liu_2017_stein} while the second is motivated by the recent paper \cite{carrillo_skrzeczkowski_warnett_2024_stein} which has proven for the very first time the existence of a kernel that satisfies the so-called Stein-log-Sobolev inequality, conjectured in one space dimention in \cite{duncan_nusken_szpruch_2023_geometry},
    \begin{align}
        \label{eq:slsi}
        \begin{split}
            \lambda\, \mathrm{KL}(\rho\,||\,\rho_\infty)&\leq \mathbb{D}^2(\rho \,||\, \rho_\infty)\\
            &:=\int_{\mathbb{R}^{d}}\int_{\mathbb{R}^{d}}\nabla\left(\frac{\,\mathrm{d} \rho}{\,\mathrm{d} \rho_\infty}\right)(x)\cdot K_\sigma(x,y)\nabla\left(\frac{\,\mathrm{d} \rho}{\,\mathrm{d} \rho_\infty}\right)(y)\,\mathrm{d}\rho_\infty(y)\,\mathrm{d}\rho_\infty(x),
        \end{split}
        \tag{SLSI}
    \end{align}
    where $K_{\sigma}$ is defined in \eqref{eq:general_form_kernel_general_weights} with the weight $w$ given by \eqref{eq:slsi_weight} and $k\in L^1(\mathbb{R}^{d})+L^2(\mathbb{R}^{d})$ with its Fourier transform satisfying the growth bounds
    \begin{equation}\label{eq:fourier_bound_kernel_introduction}
        \frac{1}{\mathcal{D}_0}\,\frac{1}{1+|\xi|^2}\leq \widehat{k}(\xi)\leq \mathcal{D}_1\,\frac{1}{1+|\xi|^2},
    \end{equation}
    and $\lambda=\lambda_0/\mathcal{D}_0$ is independent of $\sigma \in (0,1)$ (see Remark \ref{rem:idea_SLSI_constant_ind_of_scl}). The inequality \eqref{eq:slsi} implies exponential rate of convergence of $\mathrm{KL}(\rho_t\,||\,\rho_\infty) \to 0$ whenever $\rho_t$ solves \eqref{eq:mf_slsi_svgd}.
    
    In the limit $\sigma \to 0$, we expect to obtain for \eqref{eq:mf_svgd}
    \begin{align}
        \label{eq:lim_mf_svgd}
        \begin{split}
            \partial_t \varrho&=\mathrm{div}\Big(\varrho^2\;  \nabla\big(\ln(\varrho)+V\big)\Big)\\
            \varrho(0,x) &= \varrho_0(x)
        \end{split}
        \begin{split}
            & \mbox{ on } (0,\infty) \times \mathbb{R}^{d}, \\
            &\mbox{ on }\mathbb{R}^{d},
        \end{split}
    \end{align}
    and for \eqref{eq:mf_slsi_svgd}
    \begin{align}
        \label{eq:lim_mf_slsi_svgd}
        \begin{split}
            \partial_t \rho&=\mathrm{div}\Big(\rho^2\; e^{2V-\mathbb{V}}\; \nabla\big(\ln(\rho)+V\big)\Big)\\
            \rho(0,x) &= \rho_0(x)
        \end{split}
        \begin{split}
            & \mbox{ on } (0,\infty) \times \mathbb{R}^{d}, \\
            &\mbox{ on }\mathbb{R}^{d}.
        \end{split}
    \end{align}
    
    The target of the paper is to justify the limit $\sigma \to 0$ rigorously. More precisely, we show that the weak solutions $\{\rho^\sigma\}_\sigma$, $\{\varrho^\sigma\}_\sigma$ of both equations \eqref{eq:mf_svgd} and
    \eqref{eq:mf_slsi_svgd} converge to the weak solutions $\rho$, $\varrho$ of both equations \eqref{eq:lim_mf_svgd} and $\eqref{eq:lim_mf_slsi_svgd}$ respectively on bounded time intervals as $\sigma\to 0$ in a suitable weak topology. However, we are only able to show
    exponentially fast convergence of weak solutions $\{\rho^\sigma\}_\sigma$ with respect to the Kullback-Leibler divergence to $\rho_\infty$ as $t\to\infty$, uniformly with respect to $\sigma$. The same exponential rate holds for the solution $\rho$ of the limit equation \eqref{eq:lim_mf_slsi_svgd}, which converges to $\rho_\infty$ as $t\to\infty$. This is mainly due to an uniform in $\sigma$ estimate on the Stein-log-Sobolev inequality \eqref{eq:slsi}. The situation is different for the weak solutions $\{\varrho^\sigma\}_\sigma$ of \eqref{eq:mf_svgd} due to the lack of the corresponding Stein-log-Sobolev inequality \eqref{eq:slsi}. As a consequence, we cannot prove asymptotic convergence of the weak solution $\varrho$ of \eqref{eq:lim_mf_svgd} to $\rho_\infty$ in time. This remains as of now an open problem.\\
    
    The problem is important for the mathematical analysis of the SVGD method. Actually, it was conjectured in \cite[Equation 9]{lu_lu_nolen_2019_scaling} that if the kernel is scaled proportionally to the number of particles $N^\beta K(N^\beta\,\cdot)$, one expects the behavior of the particle system to resemble \eqref{eq:lim_mf_svgd}, and not \eqref{eq:mf_svgd}. In the current work we address this question partially by studying the effect of the concentration of the kernels at the continuous level \eqref{eq:mf_svgd} and \eqref{eq:mf_slsi_svgd}. The general problem of simultaneously studying concentration of the kernels and the mean-field limit from particles remains open and appears challenging. The main difficulty is that, for the interacting particle system, one lacks uniform estimates on the dissipation $\mathbb{D}^2(\rho_t \,||\, \rho_\infty)$, which are essential to our approach. Additionally, for the equations \eqref{eq:mf_svgd} we can apply a Dobrushin type result to justify the joint-limit $N\rightarrow\infty$ and $\sigma\to 0$ of the particle system to the mean-field limit, see for instance \cite[Theorem 1.1]{carrillo_skrzeczkowski_2023_convergence}. This is not possible for \eqref{eq:mf_slsi_svgd} because of the lack of regularity for the kernel $k$ implied by the growth estimates of its Fourier transform in  \eqref{eq:fourier_bound_kernel_introduction}. \\
    
    Our work also addresses new challenges in the study of nonlocal-to-local limits for partial differential equations, where one rigorously justifies the passage to the limit in nonlocal models. Such limits, among others, connect different equations in mathematical physics, such as the Cahn–Hilliard equation \cite{davoli_scarpa_trussardi_2021_nonlocal,elbar_skrzeczkowski_2023_degenerate}, and provide a rigorous foundation for numerical methods like the blob method \cite{carrillo_craig_patacchini_2019_blob,carrillo_esposito_skrzeczkowski_wu_2024_nonlocal,craig_jacobs_turanova_2025_nonlocal}. The challenge in the analysis of nonlocal-to-local limits is to control the quantities related to $\rho^\sigma$ or $\rho^\sigma\ast\omega_\sigma$, uniformly in $\sigma$. Our approach consists of two new novel ideas. First, in Lemma \ref{prop:bounds_gradient} we are able to deduce a uniform estimate on $\nabla \rho^\sigma \ast \omega_\sigma$ by exploiting the uniform bound on dissipation $\mathbb{D}^2(\rho^\sigma\, ||\, \rho_\infty)$ and viewing $(\rho^\sigma\, \nabla V)\ast \omega_\sigma$ as its small perturbation. This estimate is only local in space which poses additional challenges with kernels which are globally supported. Thus, we have to carefully control the tails.\\

    The second idea addresses so-called commutator estimates \cite{constantin_e_titi_1994_onsager,diperna_lions_1989_ordinary,elbar_perthame_skrzeczkowski_2024_limit}. When passing to the limit in the distributional formulation of \eqref{eq:mf_svgd}--\eqref{eq:mf_slsi_svgd}, we have to prove strong convergence of the term $(\nabla \varphi \,  \rho^\sigma)\ast \omega_\sigma$ where $\varphi$ is smooth and compactly supported. A priori, this is difficult because there is no compactness information available on $\rho^\sigma$ itself while $\rho^\sigma\ast\omega_\sigma$ is compact only locally in space. In Lemma \ref{lem:comm_est} we propose a new idea based on expanding test function $\nabla \varphi(x-y)$ into a Taylor's series:
    \begin{align*}
        &|(\nabla \varphi \,  \rho^\sigma)\ast \omega_\sigma - \nabla \varphi \, (\rho^\sigma\ast \omega_\sigma)|\\ &\leq \sum_{k=2}^{r} C_k\, |\nabla^k \varphi(x)|\, (\rho^\sigma\ast(\omega_\sigma\, |y|^{k-1}))+ C_{r+1} \, \|\nabla^{r+1} \varphi\|_{L^{\infty}} \, \rho^\sigma\ast(\omega_\sigma\, |y|^{r}).
    \end{align*}
    The second term converges strongly to 0 because $r$ is large. The first term can be handled by the estimate $|y|^{k-1}\leq |y|^r + 1$ and the Vitali convergence theorem since the test function localizes the reasoning to a compact set. We also introduce a new approach to estimating the commutator $(f\chi)\ast\omega_\sigma-\chi\,(f\ast\omega_\sigma)$ for a test function $\chi$, where $f$ has only $H^{-1}$ regularity in space, by working in Fourier variables, see Lemma \ref{lem:h-1_mod_bound}. This plays an important role in the compactness argument.  \\

    Moreover, the limiting PDE \eqref{eq:lim_mf_svgd} can be viewed as the 2-Wasserstein gradient flow of the functional $\mathcal{F}[\rho] = \int_{\mathbb{R}^{d}} (\rho\, \log \rho + \rho\, V) \,\mathrm{d} x$ with a convex mobility $m(\rho) = \rho^2$. However, a rigorous understanding of PDEs with nonlinear mobilities is still missing. In particular, PDEs of the form $\partial_t \rho = \mathrm{div}(m(\rho)\,\nabla \frac{\delta F}{\delta \rho})$ have rigorous gradient flow interpretation only for concave mobilities $m$ \cite{carrillo_lisini_savare_slepcev_2010_nonlinear} with additional results available for $m(\rho) = \rho^{\alpha}$ and $\alpha\in(0,1)$ \cite{decourcel_elbar_2025_repulsion,carrillo_gomezcastro_vazquez_2022_fast}. For convex mobilities like $m(\rho) = \rho^{\alpha}$ with $\alpha>1$ the available results deal only with $\mathcal{F}[\rho] = \frac{1}{2}\int_{\mathbb{R}^{d}} \rho\, (\mathcal{N}\ast \rho) \,\mathrm{d} x$, where $\mathcal{N}$ is a Newtonian potential \cite{decourcel_elbar_2025_repulsion,carrillo_gomezcastro_vazquez_2022_vortex}. Therefore, we are able to expand on this literature by adding a new class of functionals.\\
    
    The structure of the paper is as follows. In Section~\ref{sect:main_res}, we introduce relevant notation and present the main results of the paper. Sections~\ref{sec:unif_est}, \ref{sec:comm} and \ref{subsec:dt_est} are devoted to establishing crucial a~priori estimates, uniform with respect to the parameter $\sigma \in (0,1)$. Specifically, in Section~\ref{sec:unif_est} we establish some basic estimates, in Section \ref{sec:comm} we show commutator arguments, and in Section \ref{subsec:dt_est} we show how to control the time derivatives. Afterwards, in Section~\ref{sec:weak_conv} we establish strong and weak limits of the solutions $\{\rho^\sigma\}_{\sigma\in (0,1)}$ to some curve $\rho\in C^0([0,\infty);\;\mathscr{P}(\mathbb{R}^{d}))$. Finally, in Section \ref{sec:main_proof} we combine all the previous results to prove Theorems \ref{thm:ex_weak_sol_loc_pde} and \ref{thm:ex_weak_sol_nonloc_pde}. Additionally, in Appendix \ref{app:bessel_pot} we present the Bessel potential as an example of a kernel that satisfies Assumption \ref{assumptions:slsi}. Appendix \ref{app:weak_sol} proves the existence of weak solutions to \eqref{eq:mf_svgd} and Appendix \ref{app:aux} contains necessary auxiliary results that do not fit the rest of the paper.

    
    \section{Main results}\label{sect:main_res}
    
    We need to make assumptions on the kernel and on the potential for both equations \eqref{eq:mf_svgd} and \eqref{eq:mf_slsi_svgd} in order for us to prove the existence of weak solutions. Specifically, for the equation \eqref{eq:mf_slsi_svgd} we make the following assumption. Note that we do not assume that $\omega\geq 0$ in Assumption~\ref{assumptions:slsi}.
    
    \begin{Ass}
        \label{assumptions:slsi}
        There exists a radially symmetric function $\omega\in L^1(\mathbb{R}^{d})$ with unit mass $\int_{\mathbb{R}^{d}}\omega\,\mathrm{d} x=1$ and a fixed integer $m_0>\frac{d}{2}$ such that the following statements be true:
        \begin{enumerate}[label=(A\arabic*)]
            \item The idendity $k=\omega\ast\omega$ holds true.
            \item  \label{ass:four_mom}
            There exists $\mathcal{D}_0,\mathcal{D}_1\geq 1$, \begin{align*}
            p_0\in\begin{cases}
            (1,\frac{6}{5}]&\text{if }d=1,\\
            (1,\frac{6}{5})&\text{if }d=2,\\
            (1,\frac{2(d+1)}{2d+1}]&\text{if }d\geq 3,
        \end{cases}
    \end{align*}
    and a uniformly continuous function $\zeta\in C^0(\mathbb{R}^{d})$ such that the kernel $\omega$  satisfies
    \begin{align}
        \label{ass:fourier}
        &\zeta(x)=\hat{\omega}(x)\,\sqrt{1+|x|^2},\quad\frac{1}{\mathcal{D}_0}\leq \zeta\leq \mathcal{D}_1,\\
        \label{ass:moment}
        &|x|\,\omega\in  L^{p_0}(\mathbb{R}^{d}),
    \end{align}
    where $\hat{\omega}$ is the Fourier transform of $\omega$.
    \item\label{ass:quad}
    The potential $V\in C^1(\mathbb{R}^{d})\cap W_{\mathrm{loc}}^{2,\infty}(\mathbb{R}^{d})\cap H_{\mathrm{loc}}^{m_0}(\mathbb{R}^{d})$ satisfies the following conditions. There exists a strictly convex quadratic and non-negative polynomial $\mathbb{V}$ such that $V\geq \mathbb{V}$.
    \item\label{ass:init_slsi} There exists a probability distribution $\rho_0:\mathbb{R}^{d} \to [0,\infty)$ with unit mass $\int_{\mathbb{R}^{d}} \rho_0 \,\mathrm{d} x= 1$ with
    \begin{align*}
        \int_{\mathbb{R}^{d}} \rho_0\, |\log \rho_0| + \rho_0\, V \,\mathrm{d} x
        < \infty.
    \end{align*}
\end{enumerate}
\end{Ass}

\begin{rem}
    The range of admissible $p_0$ in Assumption \ref{assumptions:slsi} is determined by Lemma \ref{bnd:dt_rho} and the interval $\mathcal{K}_d$. Additionally, the continuity of $\zeta$ is needed in Lemma \ref{lem:h-1_mod_bound}.
\end{rem}

On the other hand, for the equation \eqref{eq:mf_svgd} we must make the upcoming assumption.

\begin{Ass}\label{assumptions:pot}
    There exists a radially symmetric function $\omega\in L^1(\mathbb{R}^{d})$ with unit mass $\int_{\mathbb{R}^{d}}\omega\,\mathrm{d} x=1$, $\omega \geq 0$ and fixed integer $m_0>\frac{d}{2}$ and $p_0>1$ such that the following hold:
    \begin{enumerate}[label=(B\arabic*)]
        \item The idendity $k=\omega\ast\omega$ holds true.
        \item\label{ass:sob_kern} The kernel satisfies
        \begin{align}
            \omega\,(1+|x|^{p_0^\ast\vee m_0})\in L^1(\mathbb{R}^d) \cap L^2(\mathbb{R}^d),
            \label{eq:pot_omega}
        \end{align}
        where we define $p_0^\ast:=\frac{p_0}{p_0-1}$ and use as shorthand $a\vee b=\max\{a,b\}$.
        
        \item\label{ass:pot}
        The non-negative potential $V\in C^2(\mathbb{R}^{d})$ satisfies the following conditions: there exists a constant $C_V \geq 1$ and for all $\alpha, \beta > 0$ there exists a constant $C_{\alpha,\beta} > 0$ such that
        \begin{align}
            & \lim_{R\to\infty} \;\inf_{|x|\geq R} V(x)=\infty,\label{eq:coercive}\\
            &|\nabla V(x)|^{p_0} \leq C_V \, (1+ V(x)),\label{eq:bound_nabla_V_by_V}\\
            \sup_{\theta\in [0,1]}|&\nabla^2 V(\theta x+(1-\theta)y)|^{p_0}\leq C_V\,(1+V(x)+V(y)),
            \label{eq:bound_hessian_V}\\
            \sup_{|y|<\alpha|x|+\beta} &(1+|x|)(|\nabla V(y)|+|\nabla^2 V(y)|)\leq C_{\alpha,\beta}\, (1+V(x)).
            \label{eq:bound_nabla_hessian_V}
        \end{align}
        \item\label{ass:init} There exists a probability distribution $\varrho_0:\mathbb{R}^{d} \to [0,\infty)$ with unit mass $\int_{\mathbb{R}^{d}} \varrho_0 \,\mathrm{d} x= 1$ with
        \begin{align*}
            \int_{\mathbb{R}^{d}} \varrho_0\, |\log \varrho_0| + \varrho_0\, V \,\mathrm{d} x
            < \infty.
        \end{align*}
    \end{enumerate}
\end{Ass}

\begin{rem}
    \label{rem:poly_growth_V}
    It was shown in \cite[Remark 2.3]{lu_lu_nolen_2019_scaling} that under the assumption \eqref{eq:bound_nabla_V_by_V} there exists some constant $C_0>0$ such that the potential $V$ is bounded by some polynomial growth
    \begin{align}
        \label{eq:V_poly_gorwth}
        V(x)\leq C_0\,(1+|x|^{p_0^\ast}).
    \end{align}
    We note that assumptions \eqref{eq:bound_hessian_V} and \eqref{eq:bound_nabla_hessian_V} are only used in the Appendix~\ref{app:weak_sol}. This is done so that we can use \cite[Theorem 2.4]{lu_lu_nolen_2019_scaling}, a key ingredient for our approximation scheme.
\end{rem}

With these assumptions we are able to prove the following theorems.

\begin{thm}
    \label{thm:ex_weak_sol_loc_pde}
    Let Assumption \ref{assumptions:slsi} be true. There exists a subsequence $\{\rho^\sigma\}_\sigma$ of weak solutions to \eqref{eq:mf_slsi_svgd} such that $\rho^\sigma_t\rightharpoonup \rho_t$ narrowly for all $t \in [0,\infty)$, where $\rho\in C^0([0,\infty);\;\mathscr{P}(\mathbb{R}^{d}))$ is a weak solution to \eqref{eq:lim_mf_slsi_svgd}, i.e. for all $T>0$ and for all test functions $\psi\in C_c^\infty([0,T]\times \mathbb{R}^{d})$ we have
    \begin{align}
        \label{eq:weak_sol_loc_pde_slsi}
        \begin{split}
            &\int_{\mathbb{R}^{d}}\psi(T,x)\, \rho_T(x) \,\mathrm{d} x- \int_{\mathbb{R}^{d}}\psi(0,x)\, \rho_0(x) \,\mathrm{d} x\\
            &=
            \int_0^T \int_{\mathbb{R}^{d}}\partial_t\psi\; \rho \,\mathrm{d} x\,\mathrm{d} t-\int_0^T\int_{\mathbb{R}^{d}} \rho\, e^{V-\frac{\mathbb{V}}{2}}\nabla \psi\cdot (\nabla\rho +\rho \nabla V)\, e^{V-\frac{\mathbb{V}}{2}}\,\mathrm{d} x\,\mathrm{d} t.
        \end{split}
    \end{align}
    Moreover, the curve is absolutely continuous with respect to the Wasserstein metric $W_1$ and it enjoys the following regularity, for all $T,R>0$,
    \begin{align}
        \label{eq:regularity_weak_sol_slsi}
        &\rho+|\rho\ln(\rho)|+|\rho V|\in L^\infty(0,T;\, L^1(\mathbb{R}^{d})),\\
        \label{eq:regularity_weak_sol_slsi_2}
        &\rho \,e^{V-\frac{\mathbb{V}}{2}}+|(\nabla \rho+\rho \nabla V) \,e^{V-\frac{\mathbb{V}}{2}}|\in L^2([0,T]\times \mathbb{R}^{d}),\\
        \label{eq:regularity_weak_sol_slsi_3}
        &\rho\, e^{V-\frac{\mathbb{V}}{2}}\in L^2(0,T;\, H^1(B_R)).
    \end{align}
    Additionally, $\rho$ satisfies an energy dissipation type inequality and exponential decay in the Kullback-Leibler distance
    \begin{equation}\label{eq:edi_slsi}
        \mathrm{KL}(\rho_t||\rho_\infty) + \int_0^t |\nabla(\rho_s \, e^{V})e^{-\frac{\mathbb{V}}{2}}|^2\,\mathrm{d} s \leq \mathrm{KL}(\rho_0||\rho_\infty), \quad \mathrm{KL}(\rho_t||\rho_\infty)
        \leq e^{-\lambda_0 t}\, \mathrm{KL}(\rho_0||\rho_\infty),
    \end{equation}
    where $\lambda_0$ is the constant from Theorem \ref{thm:slsi} below.
\end{thm}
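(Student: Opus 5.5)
The proof is a compactness argument driven by the uniform-in-$\sigma$ energy dissipation structure. We first rewrite \eqref{eq:mf_slsi_svgd} in terms of $u^\sigma := \rho^\sigma e^{V}$. Set
\[
G^\sigma:=\omega_\sigma\ast\big(\nabla(\rho^\sigma e^{V})\,e^{-\frac{\mathbb V}{2}}\big).
\]
Since $k_\sigma=\omega_\sigma\ast\omega_\sigma$ and $\omega$ is radially symmetric, the formal energy identity for \eqref{eq:mf_slsi_svgd} reads
\[
\mathrm{KL}(\rho^\sigma_t\|\rho_\infty)+\int_0^t\|G^\sigma_s\|_{L^2}^2\,\mathrm{d} s\le \mathrm{KL}(\rho_0\|\rho_\infty).
\]
The dissipation term is exactly $\mathbb D^2(\rho^\sigma\|\rho_\infty)$. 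For each fixed $\sigma$ we take the weak solutions from the existence theory, which we assume satisfy this inequality. By \eqref{eq:slsi} with $\lambda$ independent of $\sigma$, and Gr\"onwall, we get $\mathrm{KL}(\rho^\sigma_t\|\rho_\infty)\le e^{-\lambda_0 t}\mathrm{KL}(\rho_0\|\rho_\infty)$ uniformly in $\sigma$. Assumption \ref{ass:quad} makes $\rho_\infty$ have Gaussian-type tails, so the KL bound yields uniform control of $\rho^\sigma|\log\rho^\sigma|$ and $\rho^\sigma V$ in $L^\infty_tL^1$. This gives \eqref{eq:regularity_weak_sol_slsi} and tightness.

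Next we use the uniform bound on $G^\sigma$ in $L^2_{t,x}$ to control the mollified quantities, following Lemma \ref{prop:bounds_gradient}. Writing $\nabla(\rho e^V)e^{-\mathbb V/2}=\nabla(\rho e^{V-\mathbb V/2})+\tfrac12\rho e^{V-\mathbb V/2}\nabla\mathbb V$, we treat the second piece as a lower-order perturbation. This should give local-in-space bounds on $\omega_\sigma\ast(\rho^\sigma e^{V-\mathbb V/2})$ in $L^2_tH^1(B_R)$, with the tails of $\omega$ controlled through \eqref{ass:moment}. Lemma \ref{bnd:dt_rho} gives a uniform bound on $\partial_t\rho^\sigma$ in a negative Sobolev space, or equivalently $W_1$-equicontinuity. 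Arzel\`a--Ascoli in $\mathscr P$ with the narrow topology then gives a subsequence with $\rho^\sigma_t\rightharpoonup\rho_t$ narrowly for all $t$, and $\rho\in C^0([0,\infty);\mathscr P)$ that is $W_1$-absolutely continuous. Aubin--Lions, applied locally, gives strong $L^2_{\mathrm{loc}}$ convergence of $\omega_\sigma\ast(\rho^\sigma e^{V-\mathbb V/2})$. Its limit must be $\rho e^{V-\mathbb V/2}$, because the commutator of mollification with the smooth weight vanishes (Lemma \ref{lem:comm_est}). We also get weak $L^2$ convergence $G^\sigma\rightharpoonup G=\nabla(\rho e^V)e^{-\mathbb V/2}$. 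This yields \eqref{eq:regularity_weak_sol_slsi_2}--\eqref{eq:regularity_weak_sol_slsi_3}, and weak lower semicontinuity of the KL and of the $L^2$ norm passes \eqref{eq:edi_slsi} to the limit.

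To pass to the limit in the weak formulation, we write the flux term as
\[
\int\!\!\int \omega_\sigma\ast\big(\rho^\sigma e^{V-\frac{\mathbb V}{2}}\nabla\psi\big)\cdot G^\sigma\,\mathrm{d} x\,\mathrm{d} t .
\]
This is a weak--strong pairing, so it suffices to prove that $\omega_\sigma\ast(\rho^\sigma e^{V-\mathbb V/2}\nabla\psi)\to\rho e^{V-\mathbb V/2}\nabla\psi$ strongly in $L^2_{t,x}$. We split it as $\nabla\psi\,(\omega_\sigma\ast(\rho^\sigma e^{V-\mathbb V/2}))$ plus the commutator. The first piece converges by the local strong convergence, since $\psi$ has compact support. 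The commutator is handled by Lemma \ref{lem:h-1_mod_bound} in Fourier variables, using the continuity of $\zeta$, together with the Taylor expansion of Lemma \ref{lem:comm_est} and the moment assumption \eqref{ass:moment}.

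The main obstacle is this commutator/strong convergence step. Here $\omega$ may be sign-changing, and $k$ is only as regular as allowed by \eqref{eq:fourier_bound_kernel_introduction}. As a result, $\rho^\sigma$ itself has no compactness, and the information we do have is only local in space for a globally supported kernel. The plan is to bound the commutator in $L^2$ by the $H^{-1}$-type norm of $\rho^\sigma e^{V-\mathbb V/2}$, which is controlled through $G^\sigma$ and the Fourier lower bound on $\zeta$. We then show that this bound vanishes as $\sigma\to0$ by splitting into high and low frequencies and using uniform continuity of $\zeta$ near $\zeta(0)=1$.
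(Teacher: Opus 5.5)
You have a genuine gap in the a priori estimates. Nowhere do you obtain a uniform \emph{global} bound on $h^\sigma:=\rho^\sigma e^{V-\frac{\mathbb{V}}{2}}$, yet the paper's argument for \eqref{eq:mf_slsi_svgd} rests on \ref{bnd:rho} of Lemma~\ref{lem:estimates_slsi}, namely that $h^\sigma$ is bounded in $L^2([0,T]\times\mathbb{R}^d)$.

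Your last paragraph comes close but conflates two objects. $G^\sigma$ and the lower bound on $\zeta$ (via $\frac{1}{1+|\xi|^2}\le\mathcal{D}_0^2|\hat{\omega}_\sigma(\xi)|^2$) control $\nabla(\rho^\sigma e^{V})e^{-\frac{\mathbb{V}}{2}}$ in $L^2(0,T;H^{-1})$; that is \ref{bnd:h-1}. They do not control $h^\sigma$ itself. Passing from one to the other is the missing idea. The paper takes it from the SLSI paper in the form
\[
\|g\,e^{V-\frac{\mathbb{V}}{2}}\|_{L^2}^2\le C\Big(1+\big\|\nabla(g\,e^{V})e^{-\frac{\mathbb{V}}{2}}\big\|_{H^{-1}}^2\Big)\qquad\text{for probability densities } g .
\]

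Your proposed substitute, treating $\frac12 h^\sigma\nabla\mathbb{V}$ as a lower-order perturbation ``following Lemma~\ref{prop:bounds_gradient}'', cannot work here, for two reasons.
\begin{itemize}
\item That lemma dominates $(\varrho^\sigma\nabla V)\ast\omega_\sigma$ pointwise, which requires $\omega\ge 0$, as the paper explicitly notes. Under Assumption~\ref{assumptions:slsi}, $\omega$ may change sign.
\item That lemma closes using $\varrho^\sigma V\in L^1$ and polynomial growth of $V$. Here $e^{V-\frac{\mathbb{V}}{2}}\ge e^{\frac{\mathbb{V}}{2}}$ grows at least like a Gaussian, so the KL bound gives no integrability of $h^\sigma$ at infinity.
\end{itemize}
With a globally supported $\omega$ (e.g.\ the Bessel potential, which has only exponential tails), you then cannot even bound $\omega_\sigma\ast h^\sigma$ on $B_R$. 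The condition \eqref{ass:moment} constrains $\omega$, not the tails of $h^\sigma$. The term $\frac12 h\nabla\mathbb{V}$ is not lower order: the linear growth of $\nabla\mathbb{V}$ is precisely what supplies the coercivity in the inequality above.

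Everything downstream in your plan depends on this bound:
\begin{itemize}
\item the local $L^2_tH^1$ estimate and the Aubin--Lions step (the paper puts the cutoff inside, $(h^\sigma\chi_R)\ast\omega_\sigma$, and combines \ref{bnd:rho} with \eqref{eq:pert_map_is_bnd});
\item the time-derivative bound of Lemma~\ref{bnd:dt_rho}, which you cite but which is built on Lemmas~\ref{lem:ref_a_priori_est} and~\ref{lem:comm_est_slsi}, both resting on \ref{bnd:rho};
\item the identification of the weak limit of $G^\sigma$;
\item the global claim $\rho e^{V-\frac{\mathbb{V}}{2}}\in L^2([0,T]\times\mathbb{R}^d)$ in \eqref{eq:regularity_weak_sol_slsi_2};
\item the $W_1$-absolute continuity, which the paper derives from the limit equation using $\rho e^{V-\frac{\mathbb{V}}{2}}\nabla(\rho e^{V})e^{-\frac{\mathbb{V}}{2}}\in L^1$. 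A local negative-Sobolev bound on $\partial_t\rho^\sigma$ is not equivalent to $W_1$-equicontinuity.
\end{itemize}

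Your flux commutator needs the bound too. Lemma~\ref{lem:h-1_mod_bound} with $f=h^\sigma$, $\chi=\nabla\psi$ requires $\|h^\sigma\ast\omega_\sigma\|_{L^2_{t,x}}$ to be bounded, which is again \ref{bnd:rho}. Granted that, your Fourier route is a legitimate alternative to the first-order estimate of Lemma~\ref{lem:comm_est_slsi} for fixed smooth $\psi$. By contrast, Lemma~\ref{lem:comm_est}, which you invoke both for the commutator and for identifying the strong limit, is unavailable: it uses $\omega\ge0$ and $|x|^{m_0}\omega\in L^2$ from Assumption~\ref{assumptions:pot}.

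Finally, \eqref{eq:slsi} only yields the uniform rate $\lambda_0/\mathcal{D}_0$, not $\lambda_0$. To reach $e^{-\lambda_0 t}$ you must add that $\hat k(0)=1$ and continuity of $\hat k$ give $\hat k_\sigma(\xi)\ge\frac{1-\varepsilon}{1+|\xi|^2}$ for small $\sigma$ (Lemma~\ref{lem:optimal_slsi_val}). Hence the SLSI constant of $k_\sigma$ tends to $\lambda_0$, and only then do you pass to the limit by lower semicontinuity.
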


\begin{rem}
    \label{rem:ass_expl}
    The Bessel potential of order one satisfies the criteria outlined in Assumption~\ref{assumptions:slsi} (see Appendix \ref{app:bessel_pot} for more details). This potential is characterized by radial symmetry, is non-negative, possesses a singularity at the origin akin to the Newton potential, and exhibits exponential decay as the distance approaches infinity.
\end{rem}

The Assumption \ref{assumptions:slsi} has been chosen such that the kernel $k$ satisfies the so-called Stein-log-Sobolev inequality (SLSI). In particular, for the proof of Theorem \ref{thm:ex_weak_sol_loc_pde} we need to use the following result.

\begin{thm}{(see \cite[Theorem 1.1, Remark 1.2]{carrillo_skrzeczkowski_warnett_2024_stein})}
    \label{thm:slsi}
    Let $k\in L^1(\mathbb{R}^{d})+ L^2(\mathbb{R}^{d})$ be such that
    \begin{align*}
        \frac{1}{\mathcal{D}_0}\frac{1}{1+|\xi|^2}\leq \hat{k}(\xi)\leq \mathcal{D}_1\frac{1}{1+|\xi|^2},
    \end{align*}
    for some constants $\mathcal{D}_0, \mathcal{D}_1\geq 1$. Let $V\in L_\mathrm{loc}^\infty(\mathbb{R}^{d})$ such that there exists a strictly convex quadratic polynomial $\mathbb{V}\in C^\infty(\mathbb{R}^{d})$ such that $V\geq \mathbb{V}$. Let $\rho_\infty\in\mathscr{P}(\mathbb{R}^{d})$ be such that $\rho\propto e^{-V}$. Then, there exists a constant $\lambda_0$ depending only on $V$, $\mathbb{V}$ such that whenever $\rho\in \mathscr{P}(\mathbb{R}^{d})$ satisfies
    \begin{align*}
        \rho\in L^1(\mathbb{R}^{d}),\qquad
        \rho \, e^{V-\frac{\mathbb{V}}{2}}\in L^2(\mathbb{R}^{d}),\qquad
        \rho \, e^{V-\frac{\mathbb{V}}{2}}\nabla\mathbb{V}\in H^{-1}(\mathbb{R}^{d}),
    \end{align*}
    then the Stein-log-Sobolev inequality holds
    \begin{align}
        \label{eq:thm_slsi}
        \frac{\lambda_0}{\mathcal{D}_0}\,\mathrm{KL}(\rho\,||\,\rho_\infty)
        \leq \mathbb{D}^2(\rho\,||\,\rho_\infty)
        =\int_{\mathbb{R}^{d}}\nabla (\rho\, e^V)e^{-\frac{\mathbb{V}}{2}}\cdot (k\ast (\nabla (\rho \, e^V)e^{-\frac{\mathbb{V}}{2}}))\,\mathrm{d} x,
    \end{align}
    where we recall the definition of the dissipation in \eqref{eq:slsi} with $K(x,y)=w(x) k(x-y) w(y)$ and $w=e^{V-\frac{\mathbb{V}}{2}}$.
\end{thm}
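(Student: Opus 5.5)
The plan is to reduce \eqref{eq:thm_slsi} to a coercivity estimate for a Gaussian "annihilation operator" in $H^{-1}$, and to reach that estimate through a Hermite (harmonic-oscillator) spectral decomposition.

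\textbf{Step 1: reduction to an $H^{-1}$ norm.} Write $Z=\int e^{-V}$ and $f=\frac{\mathrm{d}\rho}{\mathrm{d}\rho_\infty}=Z\rho e^V$, and set $g=\nabla(\rho e^V)e^{-\frac{\mathbb{V}}{2}}$. By Plancherel and the lower Fourier bound on $\hat k$,
\begin{align*}
\mathbb{D}^2(\rho\,||\,\rho_\infty)=\int \hat k(\xi)\,|\hat g(\xi)|^2\,\mathrm{d}\xi\geq \frac{1}{\mathcal{D}_0}\,\|g\|_{H^{-1}}^2 .
\end{align*}
So it suffices to show $\mathrm{KL}(\rho\,||\,\rho_\infty)\leq C(V,\mathbb{V})\,\|g\|_{H^{-1}}^2$, with $C$ independent of $k$; then $\lambda_0=1/C$. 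Next introduce $u=\rho\, e^{V-\frac{\mathbb{V}}{2}}\in L^2$. Since $\nabla(\rho e^V)=\nabla(u e^{\mathbb{V}/2})$, we get
\begin{align*}
g=\nabla u+\tfrac12 u\nabla\mathbb{V}=:\mathcal{A}u .
\end{align*}
The hypotheses $u\in L^2$ and $u\nabla\mathbb{V}\in H^{-1}$ are exactly what make $g\in H^{-1}$ meaningful. After an affine change of variables we may assume $\mathbb{V}(x)=\tfrac12 x^\top Q x+\text{const}$. Then $\mathcal{A}$ is a (multi-dimensional, anisotropic) annihilation operator whose kernel is spanned by $G=e^{-\mathbb{V}/2}$.

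\textbf{Step 2: entropy bounded by an $L^2$ distance to the kernel.} For every constant $c>0$ we have the variational bound $\mathrm{KL}=\int f\log f\,\mathrm{d}\rho_\infty\leq\int (f\log(f/c)-f+c)\,\mathrm{d}\rho_\infty$, and pointwise $f\log(f/c)-f+c\leq (f-c)^2/c$. Since $V\geq\mathbb{V}$,
\begin{align*}
\mathrm{KL}\leq \frac{1}{cZ}\int (f-c)^2 e^{-\mathbb{V}}\,\mathrm{d}x=\frac{1}{cZ}\,\big\|Zu-cG\big\|_{L^2}^2 .
\end{align*}
Choose $c=Z\langle u,G\rangle/\|G\|^2$, so that $Zu-cG=Zu^\perp$, the orthogonal projection onto $G^\perp$ scaled by $Z$. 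This choice also gives a lower bound on $c$: $\langle u,G\rangle=\int \rho e^{V-\mathbb{V}}\geq\int\rho=1$, so $c\geq Z/\|G\|^2$. Hence $\mathrm{KL}\leq C(V,\mathbb{V})\,\|u^\perp\|_{L^2}^2$, and since $\mathcal{A}G=0$ we also have $\mathcal{A}u=\mathcal{A}u^\perp$.

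\textbf{Step 3: the main obstacle, coercivity $\|\mathcal{A}w\|_{H^{-1}}\gtrsim\|w\|_{L^2}$ for $w\perp G$.} This is genuinely harder than an $L^2$ spectral gap, because we only control a negative Sobolev norm. I would argue by duality, $\|\mathcal{A}w\|_{H^{-1}}\geq \langle \mathcal{A}w,\phi\rangle/\|\phi\|_{H^1}$, with the test function $\phi=\mathcal{A}N^{-1}w$. Here $N=\mathcal{A}^*\mathcal{A}$ is the shifted harmonic oscillator, which is diagonal in the Hermite basis adapted to $Q$ and invertible on $G^\perp$. With this choice,
\begin{align*}
\langle\mathcal{A}w,\mathcal{A}N^{-1}w\rangle=\langle w, N N^{-1}w\rangle=\|w\|^2 .
\end{align*}
It remains to bound $\|\mathcal{A}N^{-1}w\|_{H^1}^2$. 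Both $\nabla$ and multiplication by $x$ are bounded by $(N+1)^{1/2}$ in Hermite terms, so this norm is controlled by $\langle N^{-1}w,(N+1)N\,N^{-1}w\rangle\lesssim\|w\|^2$ on $G^\perp$, where $N\geq$ its first nonzero eigenvalue. This gives $\|\mathcal{A}w\|_{H^{-1}}\geq c_Q\|w\|$.

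Combining Steps 1–3 yields \eqref{eq:thm_slsi}, with $\lambda_0$ depending only on $Q$, $Z$ and $\|G\|$, that is, only on $V$ and $\mathbb{V}$. The delicate points I expect are twofold. First, one must justify the duality pairing for $u$ that is merely in $L^2$ with $\mathcal{A}u\in H^{-1}$; I would handle this by density of Hermite expansions. Second, one must keep the constants in the change of variables explicit in the anisotropic case.
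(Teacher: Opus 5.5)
Your proof is correct, but the core estimate is obtained differently from the paper. The paper does not reprove this theorem; it imports it from \cite{carrillo_skrzeczkowski_warnett_2024_stein}. Remark~\ref{rem:idea_SLSI_constant_ind_of_scl} describes the structure there: the inequality is first established for one specially constructed reference kernel $k_{0,d}$, with constant $\Lambda$ and $\hat k_{0,d}\le \mathcal{C}_1/(1+|\xi|^2)$. It is then transferred to a general $k$ via $\frac{1}{\mathcal{C}_1\mathcal{D}_0}\hat k_{0,d}\le\hat k$, which gives $\lambda_0=\Lambda/\mathcal{C}_1$. Your Step~1 uses the same Fourier comparison but takes the multiplier $(1+|\xi|^2)^{-1}$ itself (the $H^{-1}$ norm) as the reference, so no special kernel is needed. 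You then prove the reference inequality $\mathrm{KL}\lesssim\|\nabla u+\tfrac12 u\nabla\mathbb{V}\|_{H^{-1}}^2$ directly, in two pieces, both of which check out. Step~2 is a $\chi^2$-type bound after projecting off $G$: indeed $t\log t-t+1\le (t-1)^2$ on $[0,\infty)$, and $\langle u,G\rangle=\int\rho\, e^{V-\mathbb{V}}\ge 1$. Step~3 is the $H^{-1}$-coercivity of the annihilation operator on $G^\perp$, via the explicit test field $\phi=\mathcal{A}N^{-1}w$: here $N=-\Delta+\tfrac14 x^\top Q^2x-\tfrac12\mathrm{tr}\,Q$ has spectrum $\{\sum_i q_i n_i\}$, and $\|\nabla\phi\|_{L^2}^2\lesssim\langle (N+1)N^{-1}w,w\rangle$. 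The pairing identity $\langle\mathcal{A}w,\phi\rangle=\langle w,\mathcal{A}^*\phi\rangle$ that you flagged is fine: approximate $\phi$ by $C_c^\infty$ fields in the norm $\|\phi\|_{H^1}+\|x\phi\|_{L^2}$, which is finite by the same Hermite calculus. Your route buys a self-contained argument with the explicit constant $\lambda_0=c_Q^2/\int e^{-\mathbb{V}}$. This constant depends only on $\mathbb{V}$, slightly sharper than stated, because $V\ge\mathbb{V}$ forces $\langle u,G\rangle\ge1$. Combined with the mass constraint $1=\int\rho=\langle u,Ge^{\mathbb{V}-V}\rangle$, which bounds the $G$-component of $u$ by $Z^{-1}(1+\|G\|_{L^2}\|u^\perp\|_{L^2})$, your Step~3 also yields the $L^2$ bound on $\rho\,e^{V-\frac{\mathbb{V}}{2}}$ that the paper borrows from \cite{carrillo_skrzeczkowski_warnett_2024_stein} to prove \ref{bnd:rho}. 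The reference-kernel formulation, in turn, puts all dependence on $k$ into a single pointwise comparison of Fourier transforms, which makes uniformity over the rescaled kernels $k_\sigma$ immediate. Your version has the same uniformity, since only $\mathcal{D}_0$ enters.
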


\begin{rem}\label{rem:idea_SLSI_constant_ind_of_scl}
    The proof of Theorem \ref{thm:slsi} is based on constructing a particular kernel $k_{0,d}$ that satisfies the Stein-log-Sobolev inequality \eqref{eq:thm_slsi} with constant $\Lambda$ and the Fourier bound $\hat{k}_{0,d}(\xi) \leq \frac{\mathcal{C}_1}{1+|\xi|^2}$. We can compare it with an arbitrary kernel through the chain of inequalities
    \begin{align*}
        \frac{1}{\mathcal{C}_1\,\mathcal{D}_0}\hat{k}_{0,d}(\xi)\leq \frac{1}{\mathcal{D}_0}\frac{1}{1+|\xi|^2}
        \leq \hat{k}(\xi).
    \end{align*}
    Letting $\lambda_0 = \frac{\Lambda}{\mathcal{C}_1}$ (this depends only on $k_{0,d}$), the resulting SLSI coefficient for $k$ is given by $\frac{\lambda_0}{\mathcal{D}_0}$. Moreover, this constant is uniform for all scalings $\{k_\sigma\}_\sigma$, since we have the lower bound
    \begin{align*}
        \frac{1}{\mathcal{D}_0}\frac{1}{1+|\xi|^2}
        \leq \frac{1}{\mathcal{D}_0}\frac{1}{1+\sigma^2 |\xi|^2}\leq\hat{k}(\sigma\xi)=\hat{k}_\sigma(\xi),
    \end{align*}
    so the constant $\mathcal{D}_0$ does not depend on $\sigma$.
\end{rem}

\begin{thm}
    \label{thm:ex_weak_sol_nonloc_pde}
    Let Assumption \ref{assumptions:pot} hold. There exists a subsequence $\{\varrho^\sigma\}_\sigma$ of weak solutions to \eqref{eq:mf_svgd} such that $\varrho^\sigma_t\rightharpoonup \varrho_t$ narrowly for all $t \in [0,\infty)$, where $\varrho\in C^0([0,\infty);\;\mathscr{P}(\mathbb{R}^{d}))$ is a weak solution to \eqref{eq:lim_mf_svgd}, i.e. for all $T>0$ and for all test functions $\psi\in C_c^\infty([0,T]\times \mathbb{R}^{d})$ we have
    \begin{align}
        \label{eq:weak_sol_loc_pde}
        \begin{split}
            &\int_{\mathbb{R}^{d}}\psi(T,x)\, \varrho_T(x) \,\mathrm{d} x- \int_{\mathbb{R}^{d}}\psi(0,x)\, \varrho_0(x) \,\mathrm{d} x\\
            &=
            \int_0^T \int_{\mathbb{R}^{d}}\partial_t\psi\; \varrho \,\mathrm{d} x\,\mathrm{d} t-\int_0^T\int_{\mathbb{R}^{d}} \varrho\, \nabla \psi\cdot (\nabla \varrho+\varrho \nabla V) \,\mathrm{d} x\,\mathrm{d} t.
        \end{split}
    \end{align}
    The curve is absolutely continuous with respect to the Wasserstein metric $W_1$ and it enjoys the following regularity, for all $T,R>0$,
    \begin{align}
        \label{eq:regularity_weak_sol}
        &\varrho+|\varrho\ln(\varrho)|+|\varrho V|\in L^\infty(0,T;\, L^1(\mathbb{R}^{d})),\\
        \label{eq:regularity_weak_sol_2}
        &|\nabla\varrho+\varrho\nabla V|\in L^2([0,\infty)\times \mathbb{R}^{d}),\\
        \label{eq:regularity_weak_sol_3}
        &\varrho\in L^2(0,T;\, H^1(B_R)).
    \end{align}
    Additionally, $\varrho$ satisfies an energy dissipation type inequality
    \begin{equation}\label{eq:lim_edi}
        \mathrm{KL}(\varrho_t\,||\,\rho_\infty) + \int_0^t |\nabla \varrho_s +\varrho_s\nabla V|^2\,\mathrm{d} s \leq \mathrm{KL}(\varrho_0\,||\,\rho_\infty).
    \end{equation}
\end{thm}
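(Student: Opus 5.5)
The plan is a compactness argument: uniform-in-$\sigma$ estimates from the entropy dissipation, then passage to the limit $\sigma\to0$ in the weak formulation of \eqref{eq:mf_svgd}.

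\textbf{Step 1: existence and uniform estimates.} For each $\sigma\in(0,1)$, take a weak solution $\varrho^\sigma$ of \eqref{eq:mf_svgd}, supplied by the approximation scheme of Appendix~\ref{app:weak_sol}, which relies on \cite[Theorem 2.4]{lu_lu_nolen_2019_scaling}. This is where \eqref{eq:bound_hessian_V} and \eqref{eq:bound_nabla_hessian_V} are used. Since $k_\sigma=\omega_\sigma\ast\omega_\sigma$ with $\omega$ radially symmetric, the formal energy identity is
\begin{align*}
\mathrm{KL}(\varrho^\sigma_t\,||\,\rho_\infty)+\int_0^t\int_{\mathbb{R}^d}\big|\omega_\sigma\ast(\nabla\varrho^\sigma_s+\varrho^\sigma_s\nabla V)\big|^2\,\mathrm{d} x\,\mathrm{d} s\leq \mathrm{KL}(\varrho_0\,||\,\rho_\infty).
\end{align*}
I would keep it as an inequality for the approximants. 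By Assumption~\ref{ass:init} and the coercivity \eqref{eq:coercive}, the right-hand side is finite. This gives the following uniform bounds:
\begin{itemize}
\item $\varrho^\sigma|\log\varrho^\sigma|$ and $\varrho^\sigma V$ are bounded in $L^\infty_tL^1_x$, which yields tightness;
\item $G^\sigma:=\omega_\sigma\ast(\nabla\varrho^\sigma+\varrho^\sigma\nabla V)$ is bounded in $L^2((0,\infty)\times\mathbb{R}^d)$.
\end{itemize}
Following Lemma~\ref{prop:bounds_gradient}, I then treat $(\varrho^\sigma\nabla V)\ast\omega_\sigma$ as a perturbation. Using \eqref{eq:bound_nabla_V_by_V} and the moment condition \eqref{eq:pot_omega}, split $\omega$ into a compact part and a tail part to get a local bound on $\nabla(\varrho^\sigma\ast\omega_\sigma)$ in $L^2(0,T;L^2(B_R))$. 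Combined with a local $L^2$ bound on $u^\sigma:=\varrho^\sigma\ast\omega_\sigma$ (from Sobolev or Gagliardo–Nirenberg and the $L^1$ bound), this shows $u^\sigma$ is bounded in $L^2(0,T;H^1(B_R))$.

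\textbf{Step 2: compactness.} The weak formulation shows that $\partial_t\varrho^\sigma$ is bounded in a dual space of smooth compactly supported functions. Indeed, $\omega_\sigma\ast(\varrho^\sigma\nabla\psi)$ is controlled in $L^2$ using the entropy bound and $\omega\in L^2$. The same holds for $\partial_t u^\sigma$. Then:
\begin{itemize}
\item Aubin–Lions gives $u^\sigma\to\varrho$ strongly in $L^2(0,T;L^2(B_R))$.
\item Arzelà–Ascoli in $W_1$, using tightness, gives $\varrho^\sigma_t\rightharpoonup\varrho_t$ narrowly for every $t$, along a diagonal subsequence over $T\in\mathbb{N}$.
\item The limits of $u^\sigma$ and $\varrho^\sigma$ coincide, because $u^\sigma-\varrho^\sigma\to0$ in distributions.
\item $G^\sigma\rightharpoonup G$ weakly in $L^2$. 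I identify $G=\nabla\varrho+\varrho\nabla V$ by testing against smooth functions and moving the mollifier onto the test function.
\end{itemize}

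\textbf{Step 3: passage to the limit.} The nonlinear term in the weak formulation reads $\int\int (\omega_\sigma\ast(\varrho^\sigma\nabla\psi))\cdot G^\sigma$. I write
\begin{align*}
\omega_\sigma\ast(\varrho^\sigma\nabla\psi)=\nabla\psi\,u^\sigma+\big[\omega_\sigma\ast(\varrho^\sigma\nabla\psi)-\nabla\psi\,(\omega_\sigma\ast\varrho^\sigma)\big].
\end{align*}
The first term converges strongly in $L^2_{t,x}$ because of the strong local convergence of $u^\sigma$. The commutator tends to $0$ strongly in $L^2$ by Lemma~\ref{lem:comm_est}: a Taylor expansion of $\nabla\psi(x-y)$ up to order $r\ge p_0^*\vee m_0$, with the moments of $\omega$ from \eqref{eq:pot_omega} and a Vitali argument on the support of $\psi$. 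A product of a strongly convergent factor and a weakly convergent factor ($G^\sigma$) then converges, which yields \eqref{eq:weak_sol_loc_pde}.

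\textbf{Step 4: regularity and energy inequality.} Regularity \eqref{eq:regularity_weak_sol}–\eqref{eq:regularity_weak_sol_3} follows from lower semicontinuity of the entropy and of the $L^2$ norms. The energy inequality \eqref{eq:lim_edi} follows from narrow lower semicontinuity of $\mathrm{KL}$ together with weak lower semicontinuity of $\|G^\sigma\|_{L^2}$.

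\textbf{Main obstacle.} I expect the hardest step to be the local $H^1$ bound on $u^\sigma$ in Step~1. The kernel $\omega$ has global support and $\nabla V$ is unbounded, so $(\varrho^\sigma\nabla V)\ast\omega_\sigma$ on $B_R$ picks up contributions from far away. Controlling these tails uniformly in $\sigma$, via $|\nabla V|^{p_0}\lesssim 1+V$ and the $L^1\cap L^2$ moments of $\omega$, is the delicate part. I would first attempt it with Hölder in $y$, using the exponent pair $(p_0,p_0^*)$.
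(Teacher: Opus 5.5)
Your overall architecture matches the paper's: entropy-dissipation bounds, the local $H^1$ bound of Lemma~\ref{prop:bounds_gradient}, Aubin--Lions for $u^\sigma=\varrho^\sigma\ast\omega_\sigma$, the commutator Lemma~\ref{lem:comm_est}, a weak--strong product, and lower semicontinuity. The gap is the time-derivative bound in your Step~2, which is not uniform in $\sigma$ as you justify it. Young's inequality gives
\[
\|\omega_\sigma\ast(\varrho^\sigma\nabla\psi)\|_{L^2_x}\le \|\omega_\sigma\|_{L^2}\,\|\varrho^\sigma\nabla\psi\|_{L^1_x}=\sigma^{-d/2}\,\|\omega\|_{L^2}\,\|\varrho^\sigma\nabla\psi\|_{L^1_x},
\]
and the $L\log L$ bound cannot compensate for the factor $\sigma^{-d/2}$. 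This is exactly the argument of \ref{bnd:mf_scl_del_dt} in Lemma~\ref{lem:unif_bnds_approx}, where $\sigma$ is fixed and uniformity is only needed in $\delta$; it does not survive $\sigma\to0$. For $\partial_t u^\sigma$ there is a further problem: $\psi\ast\omega_\sigma$ is not compactly supported, so ``the same holds'' is not automatic. Both the Aubin--Lions compactness of $u^\sigma$ and the narrow convergence at every $t$ rest on this bound, so your Step~2 is not established as written.

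The missing idea is that localization must already enter the time-derivative estimate. The paper does this in Lemma~\ref{bnd:dt_rho}. It first uses Lemma~\ref{lem:comm_est} to replace $(\varrho^\sigma\nabla\psi)\ast\omega_\sigma$ by $\nabla\psi\,u^\sigma$, up to a vanishing error. It then applies H\"older with the local bound $u^\sigma\in L^p((0,T)\times B_R)$, $p=\frac{2q}{q-2}>2$, from Lemma~\ref{lem:ref_a_priori_est}; this is why the test space is $L^q(0,T;W^{m_0+1,\infty}_0(B_R))$ with $q\in\mathcal{K}_d$. For $u^\sigma$, Lemma~\ref{bnd:dt_rho_chi} bounds $|\nabla\psi\ast\omega_\sigma|$ by compactly supported pieces plus an $O(\sigma^{m_0})$ global remainder, which is paid for by $\|u^\sigma\|_{L^2}\lesssim\sigma^{-d/2}$ since $m_0>\frac d2$. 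Since $\omega\ge0$ here, a positivity splitting also works. If $\mathrm{supp}\,\psi\subset B_R$, then
\[
|(\varrho^\sigma\nabla\psi)\ast\omega_\sigma|\le\|\nabla\psi\|_{L^\infty}\big(u^\sigma\,\mathds{1}_{B_{2R}}+(\varrho^\sigma\mathds{1}_{B_R})\ast(\omega_\sigma\mathds{1}_{\{|z|\ge R\}})\big),
\]
and by \eqref{eq:pot_omega}, $\|\omega_\sigma\mathds{1}_{\{|z|\ge R\}}\|_{L^2}\le R^{-m_0}\sigma^{m_0-\frac d2}\,\||z|^{m_0}\omega\|_{L^2}$.

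A related point: your estimates do not support ``Arzel\`a--Ascoli in $W_1$''. Equicontinuity in $W_1$ needs Lipschitz test functions without compact support, and hence a global $L^2$ bound on $u^\sigma$, which you do not have. The paper instead gets compactness in $C^0([0,T];H^{-M}(B_R))$ and identifies the narrow limit at every $t$ via tightness and the $L\log L$ bound (Lemma~\ref{lem:basic_conv}). Note also that the paper proves $W_1$ absolute continuity of the limit (Step~5 of Section~\ref{sec:main_proof}) only for the weighted equation, where a global bound on $\rho\, e^{V-\frac{\mathbb{V}}{2}}$ is available. That part of the statement is therefore not covered by either argument.
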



\subsection{Notation}

Let us first solidify some conventions and notation used throughout the paper. All objects live on $\mathbb{R}^d$ or on $[0,T]\times\mathbb{R}^d$ with variables $x$ and $(t,x)$ respectively. For $R>0$ we denote the open ball by $B_R:=\{x\in\mathbb{R}^d:\ |x|<R\}$. The Schwartz space is written $\mathcal{S}(\mathbb{R}^d)$, with dual $\mathcal{S}'(\mathbb{R}^d)$. The space $\mathrm{Lip}_L(\mathbb{R}^d)$ consists of $L$--Lipschitz functions on~$\mathbb{R}^d$. Let $\mathscr{P}(\mathbb{R}^{d})$ be the space of Borel probability measures on $\mathbb{R}^d$, endowed with the Kantorovich–Rubinstein distance, which coincides with the $1$–Wasserstein metric:
\[
W_1(\mu,\nu)
:=\sup_{f\in\mathrm{Lip}_1(\mathbb{R}^d)}
\left|\int_{\mathbb{R}^d} f\,\mathrm{d} \mu-\int_{\mathbb{R}^d} f\,\mathrm{d} \nu\right|.
\]
A curve $[0,T]\ni t\mapsto \mu_t\in\mathscr{P}(\mathbb{R}^{d})$ is called absolutely continuous in $(\mathscr{P}(\mathbb{R}^{d}),W_1)$ if
\[
\sup_{t\in[0,T]}\int_{\mathbb{R}^d} |x| \,\mathrm{d}\mu_t < \infty
\]
and if there exists $f\in L^1(0,T)$ such that for all $0\le t\le s\le T$,
\[
W_1(\mu_t,\mu_s)\le \int_t^s f(r)\,\mathrm{d} r.
\]
For $s\in\mathbb{R}$ we use the standard Sobolev space
\[
H^s(\mathbb{R}^d):=\{u\in\mathcal{S}'(\mathbb{R}^d): (1+|\xi|^2)^{s/2}\widehat{u}(\xi)\in L^2(\mathbb{R}^d)\}.
\]
If $X$ is a Banach space of functions or distributions on $\mathbb{R}^d$, the Bochner space $L^p(0,T;X)$ carries the norm
\[
\|f\|_{L^p(0,T;X)}=\Big(\int_0^T \|f(t)\|_X^p\,\mathrm{d} t\Big)^{1/p}.
\]
We abbreviate these norms using subscripts on $t$ and $x$, e.g.
\[
\|\,\cdot\,\|_{L_t^p H_x^1}
:=\|\,\cdot\,\|_{L^p(0,T;H^1(\mathbb{R}^d))},
\qquad
\|\,\cdot\,\|_{H^1}:=\|\,\cdot\,\|_{H^1(\mathbb{R}^d)}.
\]
The same conventions apply to all other function spaces.
For any Banach space $X$, we write $X^\ast$ for its dual.
Convergences are denoted by $f_n\to f$ (strong) and $f_n\rightharpoonup f$ (weak). For $p,q,r\in[1,\infty]$ with $1+\tfrac1r=\tfrac1p+\tfrac1q$ and $f\in L^p(\mathbb{R}^d)$, $g\in L^q(\mathbb{R}^d)$, the Young inequality yields
\[
(f\ast g)(x) := \int_{\mathbb{R}^d} f(x-y)\,g(y)\,\mathrm{d} y \in L^r(\mathbb{R}^d),
\qquad
\|f\ast g\|_{L^r}\le \|f\|_{L^p}\|g\|_{L^q}.
\]
Given measurable functions $\varphi:\mathbb{R}\to\mathbb{R}$, $\omega:\mathbb{R}^d\to\mathbb{R}$, and $f:\mathbb{R}\times \mathbb{R}^d\to\mathbb{R}$, we define temporal and spatial mollification by
\[
(f\ast\varphi)(t,x):=\int_{\mathbb{R}} f(s,x)\,\varphi(t-s)\,\mathrm{d} s,
\qquad
(f\ast\omega)(t,x):=\int_{\mathbb{R}^d} f(t,y)\,\omega(x-y)\,\mathrm{d} y.
\]
The context will always make clear which variable is being mollified.
For measurable sets $E\subseteq\mathbb{R}^d$ we denote the indicator function by $\mathds{1}_E$. For $f\in L^1(\mathbb{R}^d)+L^2(\mathbb{R}^d)$ we use the Fourier transform
\[
\mathcal{F}(f)(\xi)=\,\widehat f(\xi):=\int_{\mathbb{R}^d} e^{-2\pi i x\cdot\xi}\,f(x)\,\mathrm{d} x.
\]
For a multiindex $\alpha\in\mathbb{N}^d$ and a test function $\psi\in C_c^\infty(\mathbb{R}^d)$,
\[
\partial^\alpha\psi
=\partial_{x_1}^{\alpha_1}\cdots\partial_{x_d}^{\alpha_d}\psi,
\qquad
(y^\alpha\psi)(x)=x_1^{\alpha_1}\cdots x_d^{\alpha_d}\psi(x),
\]
and we write
\[
\alpha!:=\alpha_1!\cdots \alpha_d!,
\qquad
|\alpha|:=\alpha_1+\cdots+\alpha_d.
\]
Throughout the paper we use the following interval notation:
\[
\begin{array}{c|ccc}
    d & \mathcal{I}_d & \mathcal{J}_d & \mathcal{K}_d\\ \hline
    1 & [1,\infty] & [1,3] & [6,\infty)\\
    2 & [1,\infty) & [1,3) & (6,\infty)\\
    \ge 3 & [1,\tfrac{2d}{\,d-2\,}] & [1,2+\tfrac{2}{d}] & [2d+2,\infty)
\end{array}
\]
Given two values $a,b\in\mathbb{R}$ we use as shorthand $a\vee b=\max\{a,b\}$. We fix a smooth cutoff $\chi\in C_c^\infty(\mathbb{R}^d)$ such that $\chi=1$ on $B_1$, $\chi=0$ outside $B_2$, and $0\le \chi\le 1$.
For $R>0$ we then define the rescaled cutoff
\begin{equation}\label{eq:definition_cutoff_rescaled}
    \chi_R(x):=\chi(x/R).
\end{equation}


\section{Uniform estimates}
\label{sec:unif_est}
In this section we establish estimates that will be used later. Reader should note that some estimates listed here are only proved for one of the equations \eqref{eq:mf_svgd} and \eqref{eq:mf_slsi_svgd}.
Throughout this paper we assume that $\sigma\in (0,1)$ and we define  $\omega_\sigma(x):=\sigma^{-d}(x/\sigma)$. The following technical lemma will be a crucial part for proving the a priori estimates and the commutator arguments for Theorem \ref{thm:ex_weak_sol_loc_pde}.

\begin{lem}
    \label{lem:h-1_mod_bound}
    Let \eqref{ass:fourier} hold. Let $f\in L^2(0,T; H^{-1}(\mathbb{R}^{d}))$ and $\chi\in C_c^\infty(\mathbb{R}^{d})$. Then, we have that $f\ast\omega_\sigma \in L^2([0,T]\times \mathbb{R}^{d})$ and there exists a constant $C>0$ independent of $\sigma$ and $f$ (but dependent on $\chi$) such that
    \begin{align}
        \label{eq:pert_map_is_bnd}
        \|(f\chi)\ast\omega_\sigma\|_{L_{t,x}^2}\leq C \|f\ast\omega_\sigma\|_{L_{t,x}^2}.
    \end{align}
    Moreover, we have
    \begin{align}
        \label{eq:h-1_comm}
        \lim_{\sigma\to 0}\;\sup_{\substack{f\in L^2(0,T;H^{-1}(\mathbb{R}^{d})) \\ \|f\ast\omega_\sigma\|_{L_{t,x}^2}\leq 1}} \|(f\chi)\ast\omega_\sigma-\chi\,(f\ast\omega_\sigma)\|_{L_{t,x}^2}=0.
    \end{align}
\end{lem}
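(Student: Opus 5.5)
The approach is to work entirely in Fourier variables, pointwise in time, and then integrate over $t\in[0,T]$. By \eqref{ass:fourier},
\begin{align*}
\widehat{\omega_\sigma}(\xi)=\widehat{\omega}(\sigma\xi)=\frac{\zeta(\sigma\xi)}{\sqrt{1+\sigma^2|\xi|^2}},
\end{align*}
so, with $\zeta$ bounded between $\mathcal{D}_0^{-1}$ and $\mathcal{D}_1$, the quantity $\|f\ast\omega_\sigma\|_{L^2_x}$ is equivalent, uniformly in $\sigma$, to the weighted norm $\|(1+\sigma^2|\xi|^2)^{-1/2}\widehat f\|_{L^2_\xi}$. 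Since $(1+\sigma^2|\xi|^2)^{-1}\le \sigma^{-2}(1+|\xi|^2)^{-1}$ for $\sigma<1$, this first gives $f\ast\omega_\sigma\in L^2_{t,x}$ for fixed $\sigma$ whenever $f\in L^2(0,T;H^{-1})$. All remaining estimates are then phrased in terms of the function
\begin{align*}
g(\eta):=\widehat f(\eta)\,\widehat\omega(\sigma\eta),\qquad \|g\|_{L^2_\xi}=\|f\ast\omega_\sigma\|_{L^2_x}.
\end{align*}

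For \eqref{eq:pert_map_is_bnd} we use $\widehat{f\chi}=\widehat f\ast\widehat\chi$. This gives
\begin{align*}
\mathcal{F}\big((f\chi)\ast\omega_\sigma\big)(\xi)=\int \widehat\chi(\xi-\eta)\,\frac{\widehat\omega(\sigma\xi)}{\widehat\omega(\sigma\eta)}\,g(\eta)\,\mathrm{d}\eta .
\end{align*}
The ratio is controlled using the bounds on $\zeta$ together with a Peetre-type inequality, $1+\sigma^2|\eta|^2\le 2(1+\sigma^2|\xi-\eta|^2)(1+\sigma^2|\xi|^2)\le 2(1+|\xi-\eta|^2)(1+\sigma^2|\xi|^2)$. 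This yields
\begin{align*}
\left|\frac{\widehat\omega(\sigma\xi)}{\widehat\omega(\sigma\eta)}\right|\le \sqrt2\,\mathcal{D}_0\mathcal{D}_1\,(1+|\xi-\eta|).
\end{align*}
Hence the integral operator has a convolution-dominated kernel $C(1+|z|)|\widehat\chi(z)|$, which lies in $L^1$ because $\widehat\chi$ is Schwartz. Young's inequality (equivalently, the Schur test) then gives the bound with $C=C\,\|(1+|z|)\widehat\chi\|_{L^1}$, independent of $\sigma$. Squaring and integrating in time gives \eqref{eq:pert_map_is_bnd}.

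For the commutator \eqref{eq:h-1_comm}, note that $\mathcal{F}(\chi(f\ast\omega_\sigma))=\widehat\chi\ast g$. The Fourier transform of the commutator is therefore the integral operator with kernel
\begin{align*}
\widehat\chi(\xi-\eta)\Big(\frac{\widehat\omega(\sigma\xi)}{\widehat\omega(\sigma\eta)}-1\Big)
\end{align*}
applied to $g$. We fix $M>0$ and split the kernel according to whether $|\xi-\eta|>M$ or $|\xi-\eta|\le M$.

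On the region $|\xi-\eta|>M$, the bound above dominates the kernel by $C(1+|z|)|\widehat\chi(z)|\mathds{1}_{|z|>M}$. Its $L^1$ norm tends to $0$ as $M\to\infty$, uniformly in $\sigma$.

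On the region $|\xi-\eta|\le M$, we write the ratio as
\begin{align*}
\frac{\zeta(\sigma\xi)}{\zeta(\sigma\eta)}\cdot\sqrt{\frac{1+\sigma^2|\eta|^2}{1+\sigma^2|\xi|^2}} .
\end{align*}
Since $|\sigma\xi-\sigma\eta|\le\sigma M$, uniform continuity of $\zeta$ together with $\zeta\ge\mathcal{D}_0^{-1}$ makes the first factor $1+o(1)$ as $\sigma\to0$, uniformly in $\xi,\eta$. For the second factor, $\big|\sqrt{1+\sigma^2|\eta|^2}-\sqrt{1+\sigma^2|\xi|^2}\big|\le\sigma M$, and both square roots are at least $1$, so this factor is also $1+O(\sigma M)$. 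Thus on this region the kernel is bounded by $\varepsilon(\sigma,M)|\widehat\chi(\xi-\eta)|$ with $\varepsilon(\sigma,M)\to0$ as $\sigma\to0$ for each fixed $M$.

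Applying Young's inequality to both pieces, the commutator norm is at most
\begin{align*}
\Big(\varepsilon(\sigma,M)\|\widehat\chi\|_{L^1}+C\,\|(1+|z|)\widehat\chi\|_{L^1(|z|>M)}\Big)\,\|g\|_{L^2}.
\end{align*}
Taking the supremum over $\|g\|_{L^2_{t,x}}\le1$, then letting $\sigma\to0$ and afterwards $M\to\infty$, yields \eqref{eq:h-1_comm}.

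The main obstacle is the uniform-in-$(\xi,\eta)$ estimate of the ratio $\widehat\omega(\sigma\xi)/\widehat\omega(\sigma\eta)-1$ on the near-diagonal region. This is exactly where the continuity of $\zeta$ assumed in Assumption~\ref{assumptions:slsi} (in fact its uniform continuity) is needed.
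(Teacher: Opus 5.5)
Your proposal is correct and follows essentially the same route as the paper. You pass to Fourier variables with $g=\widehat f\,\widehat{\omega}_\sigma$, bound the kernel $\frac{\widehat\omega(\sigma\xi)}{\widehat\omega(\sigma\eta)}\widehat\chi(\xi-\eta)$ via the Peetre-type inequality and a Schur/Young argument, and split the commutator kernel into the regions $|\xi-\eta|>M$ (Schwartz tails of $\widehat\chi$) and $|\xi-\eta|\le M$ (uniform continuity of $\zeta$ together with smallness of the ratio of square roots). Your deviations from the paper are cosmetic simplifications: you dominate by a convolution kernel and apply Young directly, you use the $1$-Lipschitz bound for $x\mapsto\sqrt{1+|x|^2}$ instead of the logarithmic argument, and you use $\|\widehat\chi\|_{L^1}$ rather than $R^d\|\widehat\chi\|_{L^\infty}$ near the diagonal.
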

\begin{rem}The convergence statement \eqref{eq:h-1_comm} is very delicate. One might try to obtain this result by writing
    $$
    (f\chi)\ast\omega_\sigma-\chi\,(f\ast\omega_\sigma) = \int_{\mathbb{R}^{d}} f(y)\,(\chi(y) - \chi(x))\, \omega_{\sigma}(x-y) \,\mathrm{d} y.
    $$
    However, in order to further estimate this term, one would need to impose additional sign assumptions on $f$ and $\omega_{\sigma}$. This does not seem reasonable as $f$ is only a distribution.
\end{rem}
\begin{proof}[Proof of Lemma \ref{lem:h-1_mod_bound}]
    By using the Plancherel Theorem and the convolution formula, it is sufficient to prove \eqref{eq:pert_map_is_bnd} and \eqref{eq:h-1_comm} in the Fourier space. This means we must show
    \begin{align*}
        &\|(\hat{f}\ast\hat{\chi})\,\hat{\omega}_\sigma\|_{L_{t,\xi}^2}\leq C\,\|\hat{f}\,\hat{\omega}_\sigma\|_{L_{t,\xi}^2}\\
        &\lim_{\sigma\to 0}\;\sup_{\substack{f\in L^2(0,T;H^{-1}(\mathbb{R}^{d})) \\ \|\hat{f}\hat{\omega}_\sigma\|_{L_{t,\xi}^2}\leq 1}} \|(\hat{f}\ast\hat{\chi})\,\hat{\omega}_\sigma-(\hat{f}\hat{\omega}_\sigma)\ast\hat{\chi}\|_{L_{t,\xi}^2}=0.
    \end{align*}
    We may substitute $\hat{f}\, \hat{\omega}_\sigma$ by $g\in L^2([0,T]\times \mathbb{R}^{d})$, as $\hat{f}\,\hat{\omega}_\sigma\in L^2([0,T]\times \mathbb{R}^{d})$ by our assumptions. Thus, we can rephrase the statements \eqref{eq:pert_map_is_bnd} and \eqref{eq:h-1_comm} as
    \begin{align*}
        &\|((g/\hat{\omega_\sigma})\ast\hat{\chi})\,\hat{\omega}_\sigma\|_{L_{t,\xi}^2}\leq C\,\|g\|_{L_{t,\xi}^2},\\
        &\lim_{\sigma\to 0}\;\sup_{\substack{g\in L^2([0,T]\times \mathbb{R}^{d}) \\ \|g\|_{L_{t,\xi}^2}\leq 1}} \|((g/\hat{\omega}_\sigma)\ast\hat{\chi})\,\hat{\omega}_\sigma-g\ast\hat{\chi}\|_{L_{t,\xi}^2}=0,
    \end{align*}
    or equivalently in terms of linear operators
    \begin{align*}
        &\|T_\sigma g\|_{L_{t,\xi}^2}\leq C\,\|g\|_{L_{t,\xi}^2},\\
        &\| T_\sigma-T_0\|_{\mathrm{op}}\rightarrow 0\text{ as }\sigma\rightarrow 0\text{ as a linear operator on }L^2([0,T]\times \mathbb{R}^{d}),
    \end{align*}
    where we define the operators on $L^2([0,T]\times \mathbb{R}^{d})$
    \begin{align*}
        T_\sigma g(\xi)
        :=\big(((g/\hat{\omega}_\sigma)\ast\hat{\chi})\,\hat{\omega}_\sigma\big)(\xi)=\int_{\mathbb{R}^{d}}K_\sigma(\xi,\eta)\,g(\eta)\,\mathrm{d}\eta,\quad
        T_0 g:=g\ast\hat{\chi},
    \end{align*}
    with the kernel defined by
    \begin{align*}
        K_\sigma(\xi,\eta):=\frac{\hat{\omega}_\sigma(\xi)}{\hat{\omega}_\sigma(\eta)}\hat{\chi}(\xi-\eta).
    \end{align*}
    
    \underline{Step 1: $T_\sigma$ is a bounded operator.}
    Using $|\eta|^2\leq 2|\xi-\eta|^2+2|\xi|^2$ we find
    \begin{align}
        \label{eq:bnd_ration_xi_eta}
        \sqrt{\frac{1+\sigma^2|\eta|^2}{1+\sigma^2|\xi|^2}}
        \leq \sqrt{\frac{1+2\sigma^2|\xi|^2+2\sigma^2|\xi-\eta|^2}{1+\sigma^2|\xi|^2}}
        \leq \sqrt{2}\,\sqrt{1+\sigma^2|\xi-\eta|^2}.
    \end{align}
    Hence, we discover the uniform upper bound
    \begin{align*}
        &\left|\frac{\hat{\omega}_\sigma(\xi)}{\hat{\omega}_\sigma(\eta)}\hat{\chi}(\xi-\eta)\right|
        \leq \mathcal{D}_0\mathcal{D}_1\sqrt{\frac{1+\sigma^2|\eta|^2}{1+\sigma^2|\xi|^2}}\,|\hat{\chi}(\xi-\eta)|
        \leq \sqrt{2}\,\mathcal{D}_0\mathcal{D}_1\sqrt{1+\sigma^2|\xi-\eta|^2}\,|\hat{\chi}(\xi-\eta)|.
    \end{align*}
    We now estimate $T_{\sigma} g$ by using the same idea as in the Schur's test (see for instance \cite[Theorem 5.2]{halmos_sunder_2012_bounded}). We estimate
    \begin{align}
        \label{eq:TK_bounded_op_1}
        \int_{\mathbb{R}^{d}} |K_\sigma(\xi,\eta)|\, |g(\eta)| \,\mathrm{d} \eta
        &\leq \left(\int_{\mathbb{R}^{d}} |K_\sigma(\xi,\eta)| \,\mathrm{d} \eta \right)^{1/2}\, \left(\int_{\mathbb{R}^{d}} |K_\sigma(\xi,\eta)|\, |g(\eta)|^2 \,\mathrm{d} \eta \right)^{1/2}.
    \end{align}
    Then, by using Fubini's Theorem, and the fact that $\mathcal{\chi}$ is a Schwartz function, the $L^2$ norm is controlled by
    \begin{align}
        \label{eq:TK_bounded_op_2}
        \begin{split}
            \|T_{\sigma}g\|^2_{L^2_{\xi}}
            &\leq \left(\sup_{\xi \in \mathbb{R}^{d}} \int_{\mathbb{R}^{d}} |K_{\sigma}(\xi,\eta)| \,\mathrm{d} \eta \right) \left(\sup_{\eta \in \mathbb{R}^{d}} \int_{\mathbb{R}^{d}} |K_{\sigma}(\xi,\eta)| \,\mathrm{d} \xi \right) \|g\|^2_{L^2_{\xi}}\\
            &\leq 2(\mathcal{D}_0\mathcal{D}_1)^2\left(\int_{\mathbb{R}^{d}}\sqrt{1+\sigma^2|\xi|^2}\,|\hat{\chi}(\xi)|\,\mathrm{d} \xi\right)^2
            \|g\|^2_{L^2_{\xi}}.
        \end{split}
    \end{align}
    Integrating in time yields the desired boundedness \eqref{eq:pert_map_is_bnd}.
    
    \underline{Step 2: $T_\sigma-T_0$ converges to the null-operator.} We use again the same boundedness idea in \eqref{eq:TK_bounded_op_1} and \eqref{eq:TK_bounded_op_2}. This means it suffices to prove the convergence
    \begin{equation}\label{eq:target_operators_norm_convergence_a_la_Schur_test}
        \limsup_{\sigma\rightarrow 0} \;\sup_{\xi\in\mathbb{R}^{d}} \int_{\mathbb{R}^{d}}|K_\sigma(\xi,\eta)-\hat{\chi}(\xi-\eta)|\,\mathrm{d} \eta
        =0,\quad
        \limsup_{\sigma\rightarrow 0} \;\sup_{\eta\in\mathbb{R}^{d}} \int_{\mathbb{R}^{d}}|K_\sigma(\xi,\eta)-\hat{\chi}(\xi-\eta)|\,\mathrm{d} \xi
        =0.
    \end{equation}
    To that end, fix any $\varepsilon>0$. Since $\hat{\chi}$ is Schwartz, choose $R>0$ such that
    \begin{align*}
        \int_{|z|>R}\sqrt{1+|z|^2}\,|\hat{\chi}(z)|\,\mathrm{d} z<\varepsilon.
    \end{align*}
    
    \underline{Step 2.1: Far from the diagonal.} We use the same estimate \eqref{eq:bnd_ration_xi_eta}, then we discover the uniform upper bound
    \begin{align*}
        &\left|\left(\frac{\hat{\omega}_\sigma(\xi)}{\hat{\omega}_\sigma(\eta)}-1\right)\hat{\chi}(\xi-\eta)\right|
        \leq \left(1+\frac{\hat{\omega}_\sigma(\xi)}{\hat{\omega}_\sigma(\eta)}\right)\,|\hat{\chi}(\xi-\eta)|\\
        &\leq \left(1+\mathcal{D}_0\mathcal{D}_1\sqrt{\frac{1+\sigma^2|\eta|^2}{1+\sigma^2|\xi|^2}}\right)\,|\hat{\chi}(\xi-\eta)|
        \leq C\sqrt{1+|\xi-\eta|^2}\,|\hat{\chi}(\xi-\eta)|,
    \end{align*}
    for some constant $C>0$ independent of $\sigma$ and $\varepsilon$ (recalling that $\sigma<1$). Taking the integral now yields
    \begin{align*}
        &\int_{|\xi-\eta|>R} \left|\left(\frac{\hat{\omega}_\sigma(\xi)}{\hat{\omega}_\sigma(\eta)}-1\right)\hat{\chi}(\xi-\eta)\right|\,\mathrm{d} \xi
        \leq C\int_{|\xi-\eta|>R}\sqrt{1+|\xi-\eta|^2}\,|\hat{\chi}(\xi-\eta)|\,\mathrm{d}\xi\\
        &\leq C\int_{|z|>R}\sqrt{1+|z|^2}\,|\hat{\chi}(z)|\,\mathrm{d} z
        \leq C\varepsilon.
    \end{align*}
    The same bound holds if we integrate in $\eta$ instead.
    
    \underline{Step 2.2: Near the diagonal.} Take any $\xi,\eta\in\mathbb{R}^{d}$ such that $|\xi-\eta|\leq R$. Let $\theta$ be a modulus of continuity for $\zeta$, this means $|\zeta(\xi)-\zeta(\eta)|\leq\theta(|\xi-\eta|)$, $\theta$ is non-decreasing and $\theta(t)\rightarrow 0$ as $t\rightarrow 0$. Then, we estimate
    \begin{align*}
        \left|\frac{\zeta(\sigma \xi)}{\zeta(\sigma \eta)}-1\right|
        =\frac{|\zeta(\sigma \xi)-\zeta(\sigma \eta)|}{\zeta(\sigma \eta)}
        \leq \mathcal{D}_0\,\theta(\sigma |\xi-\eta|)
        \leq \mathcal{D}_0\,\theta(\sigma R).
    \end{align*}
    Define the function $\phi(t)=\tfrac{1}{2}\ln(1+|t|^2)$. Since $|\phi'(t)|\le\tfrac{1}{2}$, we estimate
    \begin{align*}
        \left|\ln\frac{\sqrt{1+\sigma^2|\eta|^2}}{\sqrt{1+\sigma^2|\xi|^2}}\right|
        =|\phi(\sigma\eta)-\phi(\sigma\xi)|
        \leq \frac{\sigma}{2}|\xi-\eta|
        \leq \frac{\sigma R}{2}.
    \end{align*}
    We use this to derive two bounds (recalling that $\sigma<1$ and $e^{x}$ is $e^\alpha$-Lipschitz on $(-\infty,\alpha]$)
    \begin{align*}
        \left|\frac{\sqrt{1+\sigma^2|\eta|^2}}{\sqrt{1+\sigma^2|\xi|^2}}-1\right|
        \leq \left|e^{\ln\frac{\sqrt{1+\sigma^2|\eta|^2}}{\sqrt{1+\sigma^2|\xi|^2}}} - e^{0}
        \right|
        \leq  \frac{\sigma R}{2}\,e^{\frac{R}{2}},
        \qquad
        \left|\frac{\sqrt{1+\sigma^2|\eta|^2}}{\sqrt{1+\sigma^2|\xi|^2}}\right|
        \leq e^{\frac{\sigma R}{2}}
        \leq e^{\frac{R}{2}}.
    \end{align*}
    Thus we can bound the error close to the diagonal by
    \begin{align*}
        &\left|\frac{\hat{\omega}_\sigma(\xi)}{\hat{\omega}_\sigma(\eta)}-1\right|\,|\hat{\chi}(\xi-\eta)|
        =\left|\left(\frac{\zeta(\sigma \xi)}{\zeta(\sigma\eta)}-1\right)\frac{\sqrt{1+\sigma^2|\eta|^2}}{\sqrt{1+\sigma^2|\xi|^2}}+\left(\frac{\sqrt{1+\sigma^2|\eta|^2}}{\sqrt{1+\sigma^2|\xi|^2}}-1\right)\right|\,\|\hat{\chi}\|_\infty \\
        &\leq \left(\mathcal{D}_0\,\theta(\sigma R)+\frac{\sigma R}{2}\right)e^{\frac{R}{2}}\,\|\hat{\chi}\|_\infty .
    \end{align*}
    \underline{Step 3: Final estimate.} By combining the bound away from the diagonal and near the diagonal, we compute the integral
    \begin{align*}
        &\limsup_{\sigma\rightarrow 0}\;\sup_{\eta\in\mathbb{R}^d}\int_{\mathbb{R}^{d}}\left|\left(\frac{\hat{\omega}_\sigma(\xi)}{\hat{\omega}_\sigma(\eta)}-1\right)\hat{\chi}(\xi-\eta)\right|\,\mathrm{d} \xi\\
        &\leq \limsup_{\sigma\rightarrow 0}\;\sup_{\eta\in\mathbb{R}^d}\int_{|\xi-\eta|>R}\left|\left(\frac{\hat{\omega}_\sigma(\xi)}{\hat{\omega}_\sigma(\eta)}-1\right)\hat{\chi}(\xi-\eta)\right|\,\mathrm{d} \xi
        +\int_{|\xi-\eta|\leq R}\left|\left(\frac{\hat{\omega}_\sigma(\xi)}{\hat{\omega}_\sigma(\eta)}-1\right)\hat{\chi}(\xi-\eta)\right|\,\mathrm{d} \xi\\
        &\leq \limsup_{\sigma\rightarrow 0}\;C\varepsilon+C\,R^d\left(\mathcal{D}_0\,\theta(\sigma R)+\frac{\sigma R}{2}\right)e^{\frac{R}{2}}\,\|\hat{\chi}\|_\infty
        =C\varepsilon,
    \end{align*}
    for some constant $C>0$ independent of $\sigma$ and $\varepsilon$. The same bound holds if we integrate in $\eta$ instead of $\xi$. As $\varepsilon$ can be chosen as small as desired, we have proven \eqref{eq:target_operators_norm_convergence_a_la_Schur_test}.
\end{proof}

\begin{lem}[Basic gradient flow estimates for \eqref{eq:mf_svgd} and \eqref{eq:mf_slsi_svgd}]
    \label{lem:gf_est} Let Assumption \ref{assumptions:slsi} hold. Let $\{\rho^\sigma\}_\sigma$ be the weak solutions to \eqref{eq:mf_slsi_svgd}. Then, for all $T>0$ the following assertions hold:
    \begin{enumerate}[label=(GF\arabic*)]
        \item\label{est_gf_kl} $\{\rho^\sigma\}_\sigma$, $\{\rho^\sigma\,V\}_\sigma$, $\{\rho^\sigma\,|\log\rho^\sigma|\}_\sigma$ are uniformly bounded in $L^{\infty}(0,T; L^1(\mathbb{R}^d))$;
        \item\label{est_gf_dis} $\{((\nabla \rho^\sigma + \rho^\sigma \, \nabla V)e^{V-\frac{\mathbb{V}}{2}})\ast \omega_\sigma \}_\sigma$ is uniformly bounded in $L^2((0,T)\times \mathbb{R}^d)$.
    \end{enumerate}
    Analogue assertions hold for $\{\varrho^\sigma\}_\sigma$ solving \eqref{eq:mf_svgd} under Assumption \ref{assumptions:pot} if the sequence in \ref{est_gf_dis} is replaced with $\{(\nabla \varrho^\sigma + \varrho^\sigma \, \nabla V)\ast \omega_\sigma \}_\sigma$.
\end{lem}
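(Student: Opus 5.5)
The plan is to derive the energy (KL) dissipation identity for \eqref{eq:mf_slsi_svgd} and \eqref{eq:mf_svgd} and to read off both assertions from it. Formally, for \eqref{eq:mf_slsi_svgd} I test the equation with $\log\rho^\sigma + V = \log(\rho^\sigma e^V)$, i.e.\ the first variation of $\mathrm{KL}(\rho\,||\,\rho_\infty)$. Using $\nabla(\log(\rho e^V)) = \nabla(\rho e^V)/(\rho e^V)$ and $k_\sigma = \omega_\sigma\ast\omega_\sigma$ with $\omega_\sigma$ radially symmetric, the dissipation term becomes
\begin{align*}
\frac{\mathrm{d}}{\mathrm{d} t}\mathrm{KL}(\rho^\sigma_t\,||\,\rho_\infty) = -\int_{\mathbb{R}^{d}} \nabla(\rho^\sigma e^V)e^{-\frac{\mathbb{V}}{2}}\cdot k_\sigma\ast\big(\nabla(\rho^\sigma e^V)e^{-\frac{\mathbb{V}}{2}}\big)\,\mathrm{d} x = -\big\|\big(\nabla(\rho^\sigma e^V)e^{-\frac{\mathbb{V}}{2}}\big)\ast\omega_\sigma\big\|_{L^2}^2 .
\end{align*}
Here I use Plancherel, $\hat k_\sigma = \hat\omega_\sigma^2\geq 0$, and the fact that $\omega_\sigma$ is real and even, so $\hat\omega_\sigma$ is real. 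Since $\nabla(\rho e^V)e^{-\mathbb{V}/2} = (\nabla\rho+\rho\nabla V)e^{V-\frac{\mathbb{V}}{2}}$, integrating in time gives
\begin{align*}
\mathrm{KL}(\rho^\sigma_t\,||\,\rho_\infty)+\int_0^t\big\|\big((\nabla\rho^\sigma+\rho^\sigma\nabla V)e^{V-\frac{\mathbb{V}}{2}}\big)\ast\omega_\sigma\big\|_{L^2}^2\,\mathrm{d} s\leq \mathrm{KL}(\rho_0\,||\,\rho_\infty).
\end{align*}
The right-hand side is finite and independent of $\sigma$ by \ref{ass:init_slsi}, because $\mathrm{KL}(\rho_0||\rho_\infty)=\int\rho_0\log\rho_0+\rho_0V+\log Z$. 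This directly gives \ref{est_gf_dis}. The same computation with $w=1$ gives the analogue for \eqref{eq:mf_svgd}.

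For \ref{est_gf_kl}, mass conservation gives the $L^1$ bound on $\rho^\sigma$, since weak solutions are probability densities. From the KL bound we get a uniform bound on $\int \rho^\sigma\log\rho^\sigma + \rho^\sigma V$. To split it into bounds on $\rho^\sigma V$ and $\rho^\sigma|\log\rho^\sigma|$, I control the negative part of $\rho\log\rho$ by the standard Carleman-type estimate. On $\{\rho\leq e^{-V/2}\}$ we have $\rho|\log\rho|\leq C e^{-V/4}$, and on the complement $|\log\rho|\leq V/2$. This gives
\begin{align*}
\int \rho(\log\rho)_- \leq \tfrac12\int\rho V + C\int e^{-V/4}.
\end{align*}
The integral $\int e^{-V/4}$ is finite because $V\geq\mathbb{V}$ is strictly convex quadratic (respectively because of coercivity \eqref{eq:coercive} together with the growth of $V$; if needed, I would use $e^{-\delta|x|}$-type splitting with $\int\rho|x|\lesssim\int\rho V+1$ instead). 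Since $V\geq 0$, absorbing gives bounds on $\int\rho V$ and $\int\rho|\log\rho|$ uniformly in $t\in[0,T]$ and $\sigma$.

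The main obstacle is rigor. The solutions are only weak, so testing with $\log\rho^\sigma+V$ is not justified directly. I would obtain the dissipation inequality at the level of the approximation scheme used to construct weak solutions (Appendix~\ref{app:weak_sol}, via \cite[Theorem 2.4]{lu_lu_nolen_2019_scaling} for \eqref{eq:mf_svgd}, and the construction in \cite{carrillo_skrzeczkowski_warnett_2024_stein} for \eqref{eq:mf_slsi_svgd}), where solutions are smooth and positive. I would then pass to the limit using lower semicontinuity of KL under narrow convergence and weak lower semicontinuity of the $L^2$ norm. In effect, the statement holds for the weak solutions constructed in this way, which carry the energy dissipation inequality.
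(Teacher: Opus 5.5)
Your proposal is correct and follows the paper's own route. You obtain the KL dissipation inequality \eqref{eq:entropy_identity} by testing with $\log(\rho^\sigma e^V)$ and using $k_\sigma=\omega_\sigma\ast\omega_\sigma$, then bound the negative part of $\rho\log\rho$ pointwise (the paper uses Lemma~\ref{lem:control_neg_log}). Your remark about deriving the inequality at the level of the approximation scheme is more careful than the paper, which simply asserts it.

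Two small precisions. First, on $\{\rho>e^{-V/2}\}$ you only get $(\log\rho)_-\le V/2$, not $|\log\rho|\le V/2$; this is all you actually use. Second, under Assumption~\ref{assumptions:pot}, integrability of $e^{-V/4}$ does not follow from coercivity alone. It follows from \eqref{eq:bound_nabla_hessian_V}: taking $y$ in a fixed ball where $\nabla V\neq 0$ forces $1+V(x)\geq c\,(1+|x|)$ for some $c>0$.
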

\begin{proof}
    We focus on the proof of $\{\rho^\sigma\}_\sigma$ and the proof for $\{\varrho^\sigma\}_\sigma$ is similar. We show that the weak solution $\rho^{\sigma}$ satisfies
    \begin{align}
        \label{eq:entropy_identity}
        \mathrm{KL}(\rho_T^\sigma\,||\, \rho_\infty) + \int_0^T \int_{\mathbb{R}^{d}} |((\nabla \rho_t^\sigma + \rho_t^\sigma \, \nabla V)e^{V-\frac{\mathbb{V}}{2}})\ast\omega_\sigma|^2 \,\mathrm{d} x\,\mathrm{d} t \leq \mathrm{KL}(\rho_0 \,||\,\rho_\infty).
    \end{align}
    The identity \eqref{eq:entropy_identity} follows by multiplying \eqref{eq:mf_slsi_svgd} by $\log(\rho^\sigma e^{V})$, exploiting $k_{\sigma}=\omega_{\sigma}\ast\omega_{\sigma}$ and using integration by parts. Then, \eqref{eq:entropy_identity} together with Lemma \ref{lem:control_neg_log} implies bounds \ref{est_gf_kl} and \ref{est_gf_dis}.
\end{proof}

In the following lemma we derive estimates specifically for the weak solutions $\{\rho^\sigma\}_\sigma$ of \eqref{eq:mf_slsi_svgd}. These bounds derive from the Stein-log-Sobolev inequality \eqref{eq:slsi}.

\begin{lem}[Estimates for solutions of \eqref{eq:mf_slsi_svgd}]
    \label{lem:estimates_slsi}
    Let Assumption \ref{assumptions:slsi} hold and let $\{\rho^\sigma\}_\sigma$ be the weak solution to \eqref{eq:mf_slsi_svgd}. Then, for any fixed $T,R>0$ the following assertions are true:
    \begin{enumerate}[label=(S\arabic*)]
        \item \label{bnd:rho}
        $\{\rho^\sigma e^{V-\frac{\mathbb{V}}{2}}\}_\sigma$ is uniformly bounded in $L^2([0,T]\times \mathbb{R}^{d})$;
        \item \label{bnd:h-1} $\{(\nabla\rho^\sigma +\rho^\sigma\nabla V)e^{V-\frac{\mathbb{V}}{2}}\}_\sigma$ is uniformly bounded in $L^2(0,T;\, H^{-1}(\mathbb{R}^{d}))$;
        \item \label{bnd:h1}
        $\{(\rho^{\sigma} e^{V-\frac{\mathbb{V}}{2}}\chi_R)\ast\omega_\sigma\}_\sigma$ is uniformly bounded in $L^2(0,T;\, H^1(\mathbb{R}^{d}))$, where $\chi_R$ is defined in \eqref{eq:definition_cutoff_rescaled}, with the bound depending on $R>0$.
    \end{enumerate}
\end{lem}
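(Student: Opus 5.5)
Write $u^\sigma := \nabla(\rho^\sigma e^V)e^{-\frac{\mathbb{V}}{2}} = (\nabla\rho^\sigma+\rho^\sigma\nabla V)e^{V-\frac{\mathbb{V}}{2}}$, so that \eqref{eq:entropy_identity} reads $\mathrm{KL}(\rho^\sigma_T\|\rho_\infty)+\|u^\sigma\ast\omega_\sigma\|^2_{L^2_{t,x}}\le \mathrm{KL}(\rho_0\|\rho_\infty)$. We prove \ref{bnd:h-1} first, then \ref{bnd:rho}, then \ref{bnd:h1}.

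\textbf{Step 1: \ref{bnd:h-1}.} By Plancherel, $\|u^\sigma\ast\omega_\sigma\|_{L^2_x}^2=\int|\hat u^\sigma(\xi)|^2|\hat\omega(\sigma\xi)|^2\,\mathrm{d}\xi$. By \eqref{ass:fourier},
\[
|\hat\omega(\sigma\xi)|^2 \geq \frac{1}{\mathcal{D}_0^{2}}\,\frac{1}{1+\sigma^2|\xi|^2} \geq \frac{1}{\mathcal{D}_0^{2}}\,\frac{1}{1+|\xi|^2}
\]
for $\sigma<1$. Hence $\|u^\sigma\|_{H^{-1}}\le \mathcal{D}_0\|u^\sigma\ast\omega_\sigma\|_{L^2}$, and integrating in time together with Lemma \ref{lem:gf_est}, \ref{est_gf_dis}, gives \ref{bnd:h-1}. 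The identification $\mathbb{D}^2(\rho^\sigma_t\|\rho_\infty)=\|u^\sigma_t\ast\omega_\sigma\|_{L^2}^2$ uses $k_\sigma=\omega_\sigma\ast\omega_\sigma$ with $\omega$ radial.

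\textbf{Step 2: \ref{bnd:rho}.} Set $g:=\rho^\sigma e^{V-\frac{\mathbb{V}}{2}}$. Then
\[
\nabla g = u^\sigma - \tfrac12 g\,\nabla\mathbb{V}.
\]
The aim is to use the structure from the proof of Theorem \ref{thm:slsi} (\cite{carrillo_skrzeczkowski_warnett_2024_stein}), whose key estimate is that $\|g\|_{L^2}^2$ is controlled by $\|u\|_{H^{-1}}^2$ plus lower-order terms. Concretely, pair $u=\nabla g+\tfrac12 g\nabla\mathbb{V}$ against $(1-\Delta)^{-1}$ of itself. Since $\mathbb{V}$ is strictly convex and quadratic, $\nabla\mathbb{V}=Ax+b$ with $A>0$. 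Expanding the square, the cross term $\langle\nabla g,(1-\Delta)^{-1}(g\nabla\mathbb{V})\rangle$ is handled via the commutator $[(1-\Delta)^{-1},x]$, which is explicit and smoothing, and it produces a positive multiple of $\|g\|^2_{H^{-1}}$-type quantities plus $\operatorname{tr}A\,\|g\|$-type terms. This yields
\[
\|g\|_{L^2}^2\le C\big(\|u\|_{H^{-1}}^2+\|\rho^\sigma\|_{L^1}\cdot(\ldots)\big).
\]
Time-integrating and using \ref{bnd:h-1} then gives \ref{bnd:rho}. If the direct computation is awkward, a fallback is to observe that the a priori finiteness hypotheses of Theorem \ref{thm:slsi} are exactly $g\in L^2$, $g\nabla\mathbb{V}\in H^{-1}$, and that its proof proceeds through such a quantitative bound, which we would cite. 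This step is the main obstacle: the $L^2$ control must come purely from the $H^{-1}$ dissipation and the quadratic confinement, since no other information is available.

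\textbf{Step 3: \ref{bnd:h1}.} Compute
\[
\nabla\big((g\chi_R)\ast\omega_\sigma\big)=(\chi_R u^\sigma)\ast\omega_\sigma-\tfrac12(\chi_R g\nabla\mathbb{V})\ast\omega_\sigma+(g\nabla\chi_R)\ast\omega_\sigma .
\]
For the first term, apply Lemma \ref{lem:h-1_mod_bound}, \eqref{eq:pert_map_is_bnd}, with $f=u^\sigma\in L^2(0,T;H^{-1})$ (by Step 1) and $\chi=\chi_R$; this bounds it by $C_R\|u^\sigma\ast\omega_\sigma\|_{L^2_{t,x}}$, which is uniformly bounded. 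In the remaining two terms $g$ is multiplied by the compactly supported smooth functions $\chi_R\nabla\mathbb{V}$ and $\nabla\chi_R$, so Young's inequality with $\|\omega_\sigma\|_{L^1}=\|\omega\|_{L^1}$ and Step 2 bound them in $L^2_{t,x}$. Finally, the $L^2$ norm of $(g\chi_R)\ast\omega_\sigma$ itself is bounded in the same way by Step 2.
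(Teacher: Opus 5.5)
Your Steps 1 and 3 are essentially the paper's argument verbatim. For \ref{bnd:h-1} you use the same Fourier lower bound $\frac{1}{1+|\xi|^2}\le\mathcal{D}_0^2|\widehat{\omega}_\sigma(\xi)|^2$. For \ref{bnd:h1} you use the same gradient identity, Lemma~\ref{lem:h-1_mod_bound} for the localized flux term, and Young's inequality plus \ref{bnd:rho} for the rest. For \ref{bnd:rho}, your fallback is exactly what the paper does. It quotes from \cite[pp.~19--20]{carrillo_skrzeczkowski_warnett_2024_stein} the estimate $\|g\|_{L^2}^2\le C\,(1+\|u\|_{H^{-1}}^2)$, valid whenever $\rho=g\,e^{\frac{\mathbb{V}}{2}-V}$ is a probability density, and combines it with \ref{bnd:h-1}.

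The self-contained computation you sketch first would not close as stated, however. Expanding $\|u\|_{H^{-1}}^2$ and handling the cross term via $[(1-\Delta)^{-1},x]$ only gives $\|g\|_{L^2}^2\le C\,(\|u\|_{H^{-1}}^2+\|g\|_{H^{-1}}^2)$. The trace and commutator contributions are indefinite errors of size $\|g\|_{H^{-1}}^2$, not helpful terms. Moreover, $\|g\|_{H^{-1}}$ is not controlled by $\|\rho^\sigma\|_{L^1}$: since $e^{V-\frac{\mathbb{V}}{2}}\ge e^{\frac{\mathbb{V}}{2}}$, a unit-mass bump placed far out makes $\|g\|_{H^{-1}}$ arbitrarily large.

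The structural reason is that $u=e^{-\frac{\mathbb{V}}{2}}\nabla(g\,e^{\frac{\mathbb{V}}{2}})$ vanishes for $g=c\,e^{-\frac{\mathbb{V}}{2}}$. So the mass can only enter through the \emph{linear} constraint $\int g\,e^{\frac{\mathbb{V}}{2}-V}\,\mathrm{d}x=1$. That constraint fixes the component of $g$ along $e^{-\frac{\mathbb{V}}{2}}$, because $e^{\frac{\mathbb{V}}{2}-V}\in L^2$ and $\int e^{-\frac{\mathbb{V}}{2}}\,e^{\frac{\mathbb{V}}{2}-V}\,\mathrm{d}x=\int e^{-V}\,\mathrm{d}x>0$.

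You would additionally need a coercivity estimate $\|g^\perp\|_{L^2}\le C\|u\|_{H^{-1}}$ on the orthogonal complement. One way is a compactness argument in $H^{-1}$: the same expansion bounds $\|g\nabla\mathbb{V}\|_{H^{-1}}$, which gives tightness. With these two ingredients your route works; without them, Step 2 rests on the citation, as in the paper.
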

\begin{proof}
    We first observe that, by assumption \eqref{ass:fourier}, we have for $\sigma\in(0,1)$
    \begin{equation}\label{eq:estimate_between_generator_h-1_and_square_root_of_kernel}
        \frac{1}{1+|\xi|^2}  \leq \frac{1}{1+\sigma^2\, |\xi|^2} \leq \mathcal{D}_0^2\,|\widehat{\omega}_{\sigma}|^2.
    \end{equation}
    To see \ref{bnd:h-1}, we let $f^\sigma = \nabla(\rho^{\sigma} e^V)e^{-\frac{\mathbb{V}}{2}}$, change to Fourier variables and apply \eqref{eq:estimate_between_generator_h-1_and_square_root_of_kernel} to obtain
    \begin{align*}
        \|f^\sigma\|_{H^{-1}_x}^2 = \int_{\mathbb{R}^{d}} \frac{|\widehat{f^\sigma}(\xi)|^2}{1+|\xi|^2} \,\mathrm{d} \xi \leq \mathcal{D}_0^2 \int_{\mathbb{R}^{d}} |\widehat{f^\sigma}(\xi)|^2 \, |\widehat{\omega}_{\sigma}|^2  \,\mathrm{d} \xi ,
    \end{align*}
    which is bounded in $L^2(0,T)$ by \ref{est_gf_dis}. To see \ref{bnd:rho}, we note that \cite[p. 19-20]{carrillo_skrzeczkowski_warnett_2024_stein} gives a constant $C$ such that for all probability measures $g:\mathbb{R}^{d} \to \mathbb{R}$ satisfying $g\, e^{V-\frac{\mathbb{V}}{2}} \in L^2(\mathbb{R}^{d})$, $\nabla(g e^V)e^{-\frac{\mathbb{V}}{2}} \in H^{-1}(\mathbb{R}^{d})$ we have
    $$
    \| g\, e^{V-\frac{\mathbb{V}}{2}} \|^2_{L^2_x} \leq C\left(1 + \int_{\mathbb{R}^{d}} \frac{|\mathcal{F}(\nabla(g\, e^V)e^{-\frac{\mathbb{V}}{2}})(\xi)|^2}{1+|\xi|^2} \,\mathrm{d} \xi \right).
    $$
    Applying this inequality with $g = \rho^{\sigma}$ we arrive at \ref{bnd:rho} by applying \ref{bnd:h-1}. Next, for the bound \ref{bnd:h1} we first observe that the sequence $\{(\rho^\sigma e^{V-\frac{\mathbb{V}}{2}}\chi_R)\ast\omega_\sigma\}_\sigma$ is uniformly bounded in $L^2((0,T)\times \mathbb{R}^{d})$ by \ref{bnd:rho} and the Young convolution inequality. Next, for the gradient we compute the identity
    \begin{align}
        \label{eq:grad_rho}
        \nabla ((\rho^\sigma e^{V-\frac{\mathbb{V}}{2}}\chi_R)\ast\omega_\sigma)
        =((\nabla \rho^\sigma +\rho^\sigma\nabla V)e^{V-\frac{\mathbb{V}}{2}}\chi_R)\ast \omega_\sigma+(\rho^\sigma e^{V-\frac{\mathbb{V}}{2}}\nabla(\chi_R e^{-\frac{\mathbb{V}}{2}})e^{\frac{\mathbb{V}}{2}})\ast\omega_\sigma.
    \end{align}
    For the first term on the RHS of \eqref{eq:grad_rho}, there exists by Lemma \ref{lem:h-1_mod_bound} a constant $C>0$ independent of $\sigma$ such that the following inequality holds
    \begin{align*}
        \|((\nabla \rho^\sigma +\rho^\sigma\nabla V)e^{V-\frac{\mathbb{V}}{2}}\chi_R)\ast \omega_\sigma\|_{L_{t,x}^2}
        \leq C \|((\nabla \rho^\sigma +\rho^\sigma\nabla V)e^{V-\frac{\mathbb{V}}{2}})\ast \omega_\sigma\|_{L_{t,x}^2}.
    \end{align*}
    We observe that the RHS is uniformly bounded by \ref{est_gf_dis} of Lemma \ref{lem:gf_est}. For the second term on the RHS of \eqref{eq:grad_rho}, we observe that $\nabla(\chi_R e^{-\frac{\mathbb{V}}{2}})e^{\frac{\mathbb{V}}{2}}\in L^\infty(\mathbb{R}^{d})$ so by using the Young convolution inequality and \ref{bnd:rho}, we discover that $\{(\rho^\sigma e^{V-\frac{\mathbb{V}}{2}}\nabla(\chi_R\, e^{-\frac{\mathbb{V}}{2}})e^{\frac{\mathbb{V}}{2}})\ast\omega_\sigma\}_\sigma$ is uniformly bounded in $L^2([0,T]\times\mathbb{R}^{d})$. This concludes the proof of \ref{bnd:h1}.
\end{proof}

\begin{rem}
    We have used Lemma \ref{lem:h-1_mod_bound} in order to derive a $H^1$ bound on the sequence $\{(\rho^{\sigma} e^{V-\frac{\mathbb{V}}{2}}\chi_R)\ast\omega_\sigma\}_\sigma$ so that we can obtain a strongly convergent subsequence in Lemma~\ref{conv:rho_slsi}.
\end{rem}

\begin{lem}[Estimates for solutions of \eqref{eq:mf_svgd}]\label{prop:bounds_gradient}
    Let Assumption \ref{assumptions:pot} hold and let $\{\varrho^\sigma\}_\sigma$ be the weak solution to \eqref{eq:mf_svgd}. Then, for any fixed $T,R>0$, the sequence $\{\varrho^\sigma\ast \omega_\sigma\}_\sigma$ is uniformly bounded in $L^2(0,T; H^1(B_R))$.
\end{lem}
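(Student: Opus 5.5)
We need $\varrho^\sigma\ast\omega_\sigma$ bounded in $L^2(0,T;H^1(B_R))$ uniformly in $\sigma$. The idea, as announced in the introduction, is to write
\[
\nabla(\varrho^\sigma\ast\omega_\sigma)=(\nabla\varrho^\sigma+\varrho^\sigma\nabla V)\ast\omega_\sigma-(\varrho^\sigma\nabla V)\ast\omega_\sigma .
\]
The first term is uniformly bounded in $L^2((0,T)\times\mathbb{R}^d)$ by the analogue of \ref{est_gf_dis} in Lemma \ref{lem:gf_est}. The second term is treated as a perturbation that can be absorbed.

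\textbf{Step 1: bound the perturbation on $B_{2R}$.} The target is the estimate
\[
\|(\varrho^\sigma\nabla V)\ast\omega_\sigma\|_{L^2(B_{2R})}\le C_R\,\|\varrho^\sigma\ast|\omega_\sigma|\,\|_{L^2(B_{3R})}+(\text{tail}),
\]
using that $\nabla V$ is bounded on compact sets ($V\in C^2$). Since $\omega\ge0$ under Assumption \ref{assumptions:pot}, we have $\omega_\sigma=|\omega_\sigma|$. Split $\varrho^\sigma=\varrho^\sigma\chi_{3R}+\varrho^\sigma(1-\chi_{3R})$.

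For the near part, the bound $|\nabla V|\le C_R$ on $B_{6R}$ gives $|(\varrho^\sigma\chi_{3R}\nabla V)\ast\omega_\sigma|\le C_R\,\varrho^\sigma\ast\omega_\sigma$ pointwise.

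For the far part with $x\in B_{2R}$, the interaction involves only $|x-y|\ge R$. Use \eqref{eq:bound_nabla_V_by_V} to write $|\nabla V|\le C(1+V)^{1/p_0}$, and H\"older against the moment $\omega(1+|z|^{p_0^\ast})\in L^1\cap L^2$ from \eqref{eq:pot_omega}. This yields a pointwise bound of the form $C\,\|\varrho^\sigma(1+V)\|_{L^1}\,\sup_{|z|\ge R}\omega_\sigma(z)$, or an $L^2$ bound via Young's inequality with $\omega_\sigma\mathds{1}_{|z|\ge R}$. This step needs care, because the $L^2$ norm of $\omega_\sigma\mathds{1}_{|z|\ge R}$ must stay bounded as $\sigma\to0$. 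Since $\|\omega_\sigma\mathds 1_{|z|\ge R}\|_{L^2}^2=\sigma^{-d}\int_{|z|\ge R/\sigma}\omega^2$ and $\omega|z|^{m_0}\in L^2$ with $m_0>d/2$, this is bounded by $\sigma^{2m_0-d}R^{-2m_0}\|\omega|z|^{m_0}\|_{L^2}^2\le C_R$. The remaining factor $\varrho^\sigma|\nabla V|$ is controlled in $L^1$ by \ref{est_gf_kl}, since $|\nabla V|^{p_0}\le C_V(1+V)$ and $p_0>1$.

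\textbf{Step 2: close the estimate for $\varrho^\sigma\ast\omega_\sigma$ in $L^2(B_{3R})$.} This is the main obstacle, since \ref{est_gf_kl} gives only $L^1$ and entropy control. I would use a Gagliardo--Nirenberg/Sobolev interpolation on the localized function $u=\chi_{3R}(\varrho^\sigma\ast\omega_\sigma)$. We have $\|u\|_{L^1}\le1$ and
\[
\|\nabla u\|_{L^2}\le \|\nabla(\varrho^\sigma\ast\omega_\sigma)\|_{L^2(B_{6R})}+C.
\]
The gradient on the right is, by Steps 1--2 at radius $6R$, at most $\|G^\sigma\|_{L^2}+C_R\|\varrho^\sigma\ast\omega_\sigma\|_{L^2(B_{9R})}+C$, where $G^\sigma:=(\nabla\varrho^\sigma+\varrho^\sigma\nabla V)\ast\omega_\sigma$. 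Interpolation gives $\|u\|_{L^2}\le C\|u\|_{L^1}^{1-\theta}\|\nabla u\|_{L^2}^{\theta}$ with $\theta<1$. This produces an inequality of the form $X_R\le C(1+\|G^\sigma\|+X_{3R})^\theta$, which has a radius loss.

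To avoid the radius loss, I would instead work globally with weights. Alternatively, use $\nabla V$ bounded on $B_{\tilde R}$ and absorb via Young's inequality $ab\le\epsilon a^2+C_\epsilon b^{2\theta/(1-\theta)}$ applied on a single ball, with the far part handled by Step 1 tails. If the radius loss persists, I would use a global version: multiply by $\chi_R$ with $R$ fixed and bound $\|\varrho^\sigma\ast\omega_\sigma\|_{L^2(\mathbb R^d)}$ via Nash's inequality. This requires the gradient on all of $\mathbb R^d$, where $\nabla V$ grows; that growth is controlled using the moment bound from \eqref{eq:V_poly_gorwth} and $\varrho^\sigma V\in L^1$, pointwise in time. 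The resulting inequality reads
\[
Y\le C(1+\|G^\sigma(t)\|_{L^2})^{\theta}Y^{0}+\dots,
\]
which, after squaring and integrating in time, gives the $L^2_t$ bound since $G^\sigma\in L^2_{t,x}$.

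\textbf{Step 3: conclude.} Once $\varrho^\sigma\ast\omega_\sigma$ is bounded in $L^2((0,T)\times B_{R'})$ for a suitable $R'>R$, Step 1 bounds $(\varrho^\sigma\nabla V)\ast\omega_\sigma$ in $L^2((0,T)\times B_R)$. Combined with the decomposition of the gradient above and the bound on $G^\sigma$, this yields the $L^2(0,T;H^1(B_R))$ bound.
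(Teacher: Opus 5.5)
Your Step 1 is correct and is a reasonable variant of the paper's Step 1. You split $\varrho^\sigma$ with a cutoff, use local boundedness of $\nabla V$ on the near part, and apply Young's inequality with $\|\omega_\sigma\mathds{1}_{\{|z|\ge R\}}\|_{L^2}\le \sigma^{m_0-d/2}R^{-m_0}\|\,|z|^{m_0}\omega\|_{L^2}$ on the far part. This avoids the polynomial growth bound \eqref{eq:V_poly_gorwth} and the higher moment $|y|^{p_0^\ast\vee m_0}$ on which the paper's global pointwise estimate \eqref{eq:bound_varrho_nabla_V_omega} relies.

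The genuine gap is Step 2: you never obtain a $\sigma$-uniform bound on $\|\varrho^\sigma\ast\omega_\sigma\|_{L^2((0,T)\times B_{R'})}$ for any $R'$. The cutoff Gagliardo--Nirenberg argument gives $X_R\lesssim(1+\|G^\sigma\|+X_{3R})^\theta$, which, as you note, does not close as written. The fallbacks are either unspecified (``weights'', ``a single ball'') or do not work. The global Nash route needs $\|\nabla(\varrho^\sigma\ast\omega_\sigma)\|_{L^2(\mathbb{R}^d)}$, hence $\|(\varrho^\sigma\nabla V)\ast\omega_\sigma\|_{L^2(\mathbb{R}^d)}$. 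Since $\nabla V$ grows, the only available bound on this is of the type $\|(1+|x|^{p_0^\ast})(\varrho^\sigma\ast\omega_\sigma)\|_{L^2(\mathbb{R}^d)}$, or $\sigma^{-d/2}$ via Young. The entropy and $\varrho^\sigma V\in L^1$ bounds give no weighted $L^2$ control. So the ``$\dots$'' in your last display hides exactly the term that cannot be estimated, and it could not be absorbed into $\|\varrho^\sigma\ast\omega_\sigma\|_{L^2}$ anyway, since it dominates it.

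The radius loss is avoidable, and removing it is the missing idea. Your near-part bound $|(\varrho^\sigma\chi_{2R}\nabla V)\ast\omega_\sigma|(x)\le C_R\,(\varrho^\sigma\ast\omega_\sigma)(x)$ holds pointwise at the same $x$. For $x\in B_R$ the far part only sees $|y|\ge 2R$, hence $|x-y|\ge R$. So the perturbation on $B_R$ is controlled by $\|\varrho^\sigma\ast\omega_\sigma\|_{L^2(B_R)}$ itself, and your enlargement to $B_{3R}$ is unnecessary. What you then need is an interpolation inequality on the ball \emph{without} a cutoff, namely the Ehrling-type estimate $\|f\|_{L^2(B_R)}\le\delta\|\nabla f\|_{L^2(B_R)}+C_\delta\|f\|_{L^1(B_R)}$ (Lemma \ref{lem:interpolation}, proved by contradiction via Rellich compactness on $B_R$). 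Writing $u=\varrho^\sigma\ast\omega_\sigma$ and using $\|u_t\|_{L^1}\le 1$, you get
\[
\|\nabla u\|_{L^2((0,T)\times B_R)}\le \|G^\sigma\|_{L^2_{t,x}}+C_{R,T}+C_R\,\delta\,\|\nabla u\|_{L^2((0,T)\times B_R)}+C_R\,C_\delta\sqrt{T}.
\]
Choosing $\delta C_R\le 1/2$ absorbs the gradient term. This is legitimate because for fixed $\sigma$ the left side is finite: $(\varrho^\sigma\nabla V)\ast\omega_\sigma\in L^\infty_tL^2_x$ since $\omega_\sigma\in L^2$. This is exactly how the paper closes the argument, after which your Step 3 goes through.
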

\begin{proof}
    We will employ the estimate in \ref{est_gf_dis} of Lemma \ref{lem:gf_est} and control the sequence $\{(\varrho^\sigma\, \nabla V) \ast \omega_\sigma\}_\sigma$. Our reasoning is divided into three steps.
    
    \smallskip
    
    \underline{Step 1: Pointwise estimate on $\{(\varrho^\sigma\, \nabla V) \ast \omega_\sigma\}$.}
    We first show that there exists a constant $C>0$ independent of $\sigma$ such that
    \begin{align}
        |(\varrho^\sigma\, \nabla V) \ast \omega_\sigma|
        \leq C(1+|x|^{p_0^\ast})\,(\varrho^\sigma\ast\omega_\sigma)+C\, (\varrho^\sigma\ast(|y|^{p_0^\ast\vee m_0}\omega_\sigma)). \label{eq:bound_varrho_nabla_V_omega}
    \end{align}
    To that end, we first employ \eqref{eq:bound_nabla_V_by_V} from Assumption \ref{assumptions:pot} to derive the bound
    \begin{align*}
        |(\varrho^\sigma\, \nabla V) \ast \omega_\sigma|
        \leq (\varrho^\sigma\, [1+|\nabla V|^{p_0}]) \ast \omega_\sigma
        \leq (1+C_V)\,(\varrho^\sigma \ast \omega_\sigma)
        +C_V\,(\varrho^\sigma  V)\ast \omega_\sigma.
    \end{align*}
    Next, by Remark \ref{rem:poly_growth_V} and Assumption \ref{assumptions:pot}, we use the polynomial upper bound \eqref{eq:V_poly_gorwth} to further estimate the second term on the RHS by
    \begin{align*}
        (\varrho^\sigma  V)\ast \omega_\sigma
        \leq C_0\,(\varrho^\sigma\ast \omega_\sigma)
        +C_0\, (\varrho^\sigma |x|^{p_0^\ast})\ast \omega_\sigma.
    \end{align*}
    We use the bound $|x|^{p_0^\ast}\leq 2^{p_0^\ast-1}|x-y|^{p_0^\ast}+2^{p_0^\ast-1}|y|^{p_0^\ast}$ to further bound the second term on the RHS by
    \begin{align*}
        (\varrho^\sigma |x|^{p_0^\ast})\ast \omega_\sigma
        \leq 2^{p_0^\ast-1}\,|x|^{p_0^\ast}\,(\varrho^\sigma \ast \omega_\sigma)+2^{p_0^\ast-1}\,\varrho^\sigma\ast (|y|^{p_0^\ast}\omega_\sigma).
    \end{align*}
    In the last step we need to bound the second term of the RHS again by
    \begin{align*}
        \varrho^\sigma\ast (|y|^{p_0^\ast}\omega_\sigma)
        \leq \varrho^\sigma\ast ((1+|y|^{p_0^\ast\vee m_0})\,\omega_\sigma).
    \end{align*}
    By combining this chain of inequalities, we discover that \eqref{eq:bound_varrho_nabla_V_omega} holds true. We remark that all the above steps require $\omega_{\sigma}\geq 0$.
    \smallskip
    
    \underline{Step 2: Application of pointwise estimate.} We combine the triangle inequality and \eqref{eq:bound_varrho_nabla_V_omega} to derive the estimate
    \begin{align*}
        &\| \nabla \varrho^\sigma \ast \omega_\sigma  \|_{L^2((0,T)\times B_R)}\\ &\leq \| (\nabla \varrho^\sigma + \varrho^\sigma\, \nabla V)\ast\omega_\sigma \|_{L^2((0,T)\times B_R)}
        +C\,\| \varrho^\sigma \ast (|x|^{p_0^\ast\vee m_0}\omega_\sigma)  \|_{L^2((0,T)\times B_R)}  \\
        &\phantom{\leq}+ C\,\| (1+|x|^{p_0^\ast})\,(\varrho^\sigma\ast\omega_\sigma) \|_{L^2((0,T)\times B_R)}.
    \end{align*}
    The first two terms on the RHS are bounded by \ref{est_gf_dis} of Lemma \ref{lem:gf_est} and Lemma \ref{lem:conv_mollifiers_scaled_x} together with assumption \eqref{eq:pot_omega}. For the last one we apply Lemma \ref{lem:interpolation} to find, for some $\delta>0$ to be chosen later, constants $C,C_\delta>0$ independent of $\sigma$ such that
    \begin{align*}
        &\| \nabla \varrho^\sigma \ast \omega_\sigma  \|_{L^2((0,T)\times B_R)}
        \leq C+C\,\| \varrho^\sigma \ast \omega_\sigma  \|_{L^2((0,T)\times B_R)}\\
        &\leq C+C \delta\,\| \nabla\varrho^\sigma \ast \omega_\sigma  \|_{L^2((0,T)\times B_R)}
        +C\,C_\delta\, \| \varrho^\sigma \ast \omega_\sigma  \|_{L^2(0,T;L^1( B_R))}.
    \end{align*}
    Observe that by \ref{est_gf_kl} of Lemma \ref{lem:gf_est} we know that $\{\varrho^\sigma \ast \omega_\sigma\}_\sigma$ is uniformly bounded in $L^2(0,T;L^1( B_R))$. Then, choosing $\delta<1/C$, we conclude the proof.
\end{proof}

\begin{lem}[A posteriori estimates for \eqref{eq:mf_svgd} and \eqref{eq:mf_slsi_svgd}]
    \label{lem:ref_a_priori_est} Let Assumption \ref{assumptions:slsi} hold and let $\{\rho^\sigma\}_\sigma$ be the weak solutions to \eqref{eq:mf_slsi_svgd}. Then, for all fixed $q\in\mathcal{I}_d$ and $r\in\mathcal{J}_d$ the following assertions hold:
    \begin{enumerate}[label=(AP\arabic*)]
        \item \label{bnd:rap_int} $\{(\rho^\sigma e^{V-\frac{\mathbb{V}}{2}}\chi_R)\ast\omega_\sigma\}_\sigma$ is uniformly bounded in $L^\infty(0,T;\, L^1(B_R))\cap L^2(0,T;\, L^q(B_R))$,
        \item \label{bnd:rap_lp} $\{(\rho^\sigma e^{V-\frac{\mathbb{V}}{2}}\chi_R)\ast\omega_\sigma\}_\sigma$ is uniformly bounded in $L^r((0,T)\times B_R)$,
    \end{enumerate}
    where $\chi_R$ is defined in \eqref{eq:definition_cutoff_rescaled}. Analogue assertions hold under Assumption \ref{assumptions:pot} for $\{\varrho^\sigma\}_\sigma$ the weak solutions of \eqref{eq:mf_svgd}, if we replace $\{(\rho^\sigma e^{V-\frac{\mathbb{V}}{2}}\chi_R)\ast\omega_\sigma\}_\sigma$ with $\{\varrho^\sigma\ast\omega_\sigma\}_\sigma$.
\end{lem}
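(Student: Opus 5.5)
Write $u^\sigma:=(\rho^\sigma e^{V-\frac{\mathbb{V}}{2}}\chi_R)\ast\omega_\sigma$ for the sequence under study. The plan is to combine an $L^\infty_tL^1_x$ bound with the $L^2_tH^1_x$ bound from Lemma \ref{lem:estimates_slsi} \ref{bnd:h1}, and then obtain the remaining bounds by Sobolev embedding and interpolation.

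\emph{$L^\infty(0,T;L^1(B_R))$ bound.} The key point is that on $\operatorname{supp}\chi_R\subset B_{2R}$ the weight $e^{V-\frac{\mathbb{V}}{2}}$ is bounded by a constant $C_R$, since $V\in C^1$ is locally bounded. Hence
\[
\|\rho^\sigma e^{V-\frac{\mathbb{V}}{2}}\chi_R\|_{L^1}\le C_R\|\rho^\sigma_t\|_{L^1}=C_R .
\]
Young's inequality gives $\|u^\sigma\|_{L^1}\le \|\omega\|_{L^1}C_R$ uniformly in $t$ and $\sigma$. The sign of $\omega$ plays no role here. For $\varrho^\sigma$ the same holds directly, since $\|\varrho^\sigma\ast\omega_\sigma\|_{L^1}\le \|\omega\|_{L^1}$ by \ref{est_gf_kl}.

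\emph{$L^2(0,T;L^q(B_R))$ bound.} By \ref{bnd:h1}, $u^\sigma$ is bounded in $L^2(0,T;H^1(\mathbb{R}^d))$. For $\varrho^\sigma$ we use instead Lemma \ref{prop:bounds_gradient}, which gives a bound in $L^2(0,T;H^1(B_R))$. The Sobolev embedding $H^1(B_R)\hookrightarrow L^q(B_R)$ holds for every $q\in\mathcal{I}_d$: all finite $q$ if $d=2$, $q=\infty$ if $d=1$, and $q\le 2d/(d-2)$ if $d\ge3$. Integrating in time gives \ref{bnd:rap_int}.

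\emph{$L^r((0,T)\times B_R)$ bound.} The main step is a Gagliardo--Nirenberg/Hölder interpolation between $L^\infty_tL^1_x$ and $L^2_tL^{q}_x$. For $d\ge3$ take $q=2^*=\frac{2d}{d-2}$ and $\frac1r=\frac{\theta}{1}+\frac{1-\theta}{2^*}$ in space. Requiring $r(1-\theta)=2$ gives the time integrability
\[
\int_0^T\|u\|_{L^r}^r\le \|u\|_{L^\infty_tL^1_x}^{r\theta}\int_0^T\|u\|_{L^{2^*}}^{2}\,\mathrm{d} t .
\]
Solving $\frac1r=\theta+\frac{(1-\theta)(d-2)}{2d}$ with $1-\theta=2/r$ yields $r=2+\frac2d$, and smaller $r$ follow by Hölder on the bounded domain. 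For $d=2$ the embedding is into every finite $q$, so any $r<3$ works. For $d=1$, $L^\infty$ with $\theta=1/3$ gives $r=3$.

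The only potential obstacle is bookkeeping: making sure the constants depend on $R$ only through $\sup_{B_{2R}}e^{V-\frac{\mathbb{V}}{2}}$, and that for $\varrho^\sigma$ only the local $H^1(B_R)$ bound is needed. Both are unproblematic because the embeddings and interpolation are applied on the fixed ball $B_R$.
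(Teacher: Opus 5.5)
Your proposal is correct and follows essentially the same route as the paper. You get the $L^\infty(0,T;L^1(B_R))$ bound from Young's inequality and $\|e^{V-\frac{\mathbb{V}}{2}}\chi_R\|_{L^\infty}<\infty$, the $L^2(0,T;L^q(B_R))$ bound from \ref{bnd:h1} (resp.\ Lemma~\ref{prop:bounds_gradient}) plus the Sobolev embedding, and the $L^r$ bound by H\"older interpolation between these two mixed-norm spaces. The only difference is bookkeeping: the paper writes $\frac{1}{p_\theta}=\frac{\theta}{2}+\frac{1-\theta}{\infty}=\frac{\theta}{q}+\frac{1-\theta}{1}$, i.e.\ $p_\theta=3-\frac{2}{q}$ for every $q\in\mathcal{I}_d$, which covers $\mathcal{J}_d$ directly, whereas you compute the endpoint exponent and then use H\"older on the bounded domain, which is equivalent.
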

\begin{proof}
    The proofs for $\{\rho^\sigma\}_\sigma$ and $\{\varrho^\sigma\}_\sigma$ are analogue so we focus on the former and briefly point out the differences for the latter. First, we prove \ref{bnd:rap_int}. We use \ref{bnd:h1} of Lemma~\ref{lem:estimates_slsi} (respectively we use Lemma \ref{prop:bounds_gradient}) and observe that the Sobolev embedding theorem guarantees the continuous embedding $H^1(B_R)\hookrightarrow L^q(B_R)$ so that $\{(\rho^\sigma e^{V-\frac{\mathbb{V}}{2}})\ast\omega_\sigma\}_\sigma$ is uniformly bounded in $L^2(0,T;\, L^q(B_R))$. Then, as $\chi_R$ is compactly supported we can use the Young convolution inequality and a change of variables to derive
    \begin{align*}
        \|(\rho^\sigma e^{V-\frac{\mathbb{V}}{2}}\chi_R)\ast\omega_\sigma\|_{L_t^\infty L_x^1}
        \leq \|e^{V-\frac{\mathbb{V}}{2}}\chi_R\|_{L^\infty}\,\|\omega\|_{L^1}\,
        \|\rho^\sigma\|_{L_t^\infty L_x^1},
    \end{align*}
    which concludes the proof of \ref{bnd:rap_int}. In order to prove \ref{bnd:rap_lp}, we perform the interpolation between the bounds in $L^\infty(0,T;\, L^1(B_R))$ and $ L^2(0,T;\, L^q(B_R))$, obtained in \ref{bnd:rap_int}. We obtain that $\{(\rho^\sigma e^{V-\frac{\mathbb{V}}{2}}\chi_R)\ast\omega_\sigma\}_\sigma$ is uniformly bounded in $L^{p_\theta}((0,T)\times B_R)$ whenever
    \begin{align*}
        \frac{1}{p_\theta}
        =\frac{\theta}{2}+\frac{1-\theta}{\infty}
        =\frac{\theta}{q}+\frac{1-\theta}{1}
        \text{ and }\theta\in [0,1].
    \end{align*}
    By solving this equation, we obtain $p_{\theta}= 3-\frac{2}{q}$, yielding that the set of admissible exponents $p_\theta$ is $\mathcal{J}_d$. With this we conclude the proof.
\end{proof}


\section{Commutator arguments}
\label{sec:comm}
In this section, we develop several commutator arguments. In the weak formulations of \eqref{eq:mf_svgd} and \eqref{eq:mf_slsi_svgd}, the test function $\varphi$ appears inside the convolution with the kernel $\omega_\sigma$. This prevents us from applying estimates that are local in space, since such estimates require the test function to be outside the convolution. To overcome this difficulty, we employ a~commutator argument that allows us to move the test function outside the convolution with $\omega_\sigma$. We first carry out this argument for the weak solutions $\{\rho^\sigma\}_\sigma$ of \eqref{eq:mf_slsi_svgd}, and then for the weak solutions $\{\varrho^\sigma\}_\sigma$ of \eqref{eq:mf_svgd}.

\begin{lem}\label{lem:comm_est_slsi}
    Let Assumption \ref{assumptions:slsi} hold, let $\{\rho^\sigma\}_\sigma$ be the weak solutions to \eqref{eq:mf_slsi_svgd}, let $q=\frac{p_0}{p_0-1}$ with $p_0$ defined in \ref{ass:four_mom}, and let $T,R>0$. Then,
    \begin{align}
        \label{eq:conv_com_slsi}
        \lim_{\sigma\to 0}\;\;
        \sup_{\substack{\varphi\in L^q(0,T;\, W_0^{1,\infty}(B_R)) \\ \|\varphi\|_{ L_t^q W_x^{1,\infty}}\leq 1}}\;\;
        \|(\rho^\sigma e^{V-\frac{\mathbb{V}}{2}}\varphi)\ast\omega_\sigma
        -\varphi\,((\rho^\sigma e^{V-\frac{\mathbb{V}}{2}}\chi_R)\ast\omega_\sigma)\|_{L_{t,x}^2}
        =0.
    \end{align}
\end{lem}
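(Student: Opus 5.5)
The plan is to write the commutator as an integral against the difference quotient of $\varphi$, and to avoid any compactness: instead we will show that its norm is bounded by a positive power of $\sigma$, uniformly over the admissible $\varphi$.

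\textbf{Step 1: pointwise bound.} Set $g^\sigma:=\rho^\sigma e^{V-\frac{\mathbb{V}}{2}}\chi_R$. Since $\varphi(t,\cdot)$ is supported in $\overline{B_R}$ and $\chi_R=1$ on $B_R$, we have $\rho^\sigma e^{V-\frac{\mathbb{V}}{2}}\varphi=g^\sigma\varphi$. Hence
\begin{align*}
(g^\sigma\varphi)\ast\omega_\sigma-\varphi\,(g^\sigma\ast\omega_\sigma)
=\int_{\mathbb{R}^{d}} g^\sigma(y)\,\big(\varphi(y)-\varphi(x)\big)\,\omega_\sigma(x-y)\,\mathrm{d} y .
\end{align*}
We use $|\varphi(y)-\varphi(x)|\leq\|\nabla\varphi(t)\|_{L^\infty}|x-y|$. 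No sign of $\omega$ is needed, because we simply pass to $|\omega|$. This gives
\begin{align*}
|\mathrm{comm}(t,x)|\leq \|\nabla\varphi(t)\|_{L^\infty}\,\big(|g^\sigma|\ast h_\sigma\big)(t,x),
\qquad h_\sigma(z):=|z|\,|\omega_\sigma(z)|=\sigma\,\sigma^{-d}\,(|\cdot|\,|\omega|)(z/\sigma).
\end{align*}

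\textbf{Step 2: Young's inequality in space.} We apply Young's inequality with $1+\frac12=\frac1a+\frac1{p_0}$ and then change variables. Using \eqref{ass:moment}, this yields
\begin{align*}
\|\mathrm{comm}(t)\|_{L^2_x}\leq \|\nabla\varphi(t)\|_{L^\infty}\,\|g^\sigma(t)\|_{L^a}\;\sigma^{1-d(1-\frac1{p_0})}\,\||x|\,\omega\|_{L^{p_0}} .
\end{align*}
The exponent $1-d(1-1/p_0)$ is positive exactly when $p_0<\frac{d}{d-1}$. This holds in every case of \ref{ass:four_mom}: it is automatic for $d=1$, follows from $p_0<6/5<2$ for $d=2$, and for $d\geq3$ we have $\frac{2(d+1)}{2d+1}<\frac{d}{d-1}$.

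\textbf{Step 3: integrability in time.} This is the main bookkeeping obstacle, since $\varphi$ is only in $L^q$ in time. We take the $L^2$ norm in time and apply Hölder with $\frac12=\frac1q+\frac1s$. Because $\frac1q=1-\frac1{p_0}$, this forces $\frac1s=\frac1{p_0}-\frac12$. It therefore remains to bound $g^\sigma$ uniformly in $L^s(0,T;L^a(\mathbb{R}^d))$. We interpolate between two bounds:
\begin{itemize}
\item $g^\sigma\in L^\infty(0,T;L^1)$, uniformly, which follows from \ref{est_gf_kl} together with the boundedness of $e^{V-\frac{\mathbb{V}}{2}}\chi_R$;
\item $g^\sigma\in L^2(0,T;L^2)$, uniformly, which follows from \ref{bnd:rho}.
\end{itemize}
The interpolation gives $L^{2/\theta}_tL^{a_\theta}_x$ with $\frac1{a_\theta}=1-\frac\theta2$. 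Choosing $\theta=\frac2{p_0}-1\in[0,1)$, which is admissible since $p_0\in(1,2)$, gives $\frac1{a_\theta}=\frac32-\frac1{p_0}=\frac1a$ and $\frac\theta2=\frac1{p_0}-\frac12=\frac1s$. So the exponents match exactly.

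\textbf{Conclusion.} Combining the three steps,
\begin{align*}
\sup_{\|\varphi\|_{L^q_tW^{1,\infty}_x}\leq1}\|\mathrm{comm}\|_{L^2_{t,x}}\leq C\,\sigma^{1-d(1-\frac1{p_0})}\to0 .
\end{align*}
Here $C$ depends on $T$ and $R$ but not on $\sigma$ or $\varphi$, which proves \eqref{eq:conv_com_slsi}.
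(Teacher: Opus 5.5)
Your proposal is correct and follows essentially the same route as the paper. Both arguments use the same pointwise bound by $(\rho^\sigma e^{V-\frac{\mathbb{V}}{2}}\chi_R)\ast|y\,\omega_\sigma|$, the same interpolation between $L^\infty_tL^1_x$ and $L^2_tL^2_x$ with spatial exponent $\frac32-\frac1{p_0}$, Young's inequality against $|y|\,|\omega_\sigma|\in L^{p_0}$, H\"older in time with $q=\frac{p_0}{p_0-1}$, and the scaling $\|y\,\omega_\sigma\|_{L^{p_0}}=\sigma^{1-d(1-\frac1{p_0})}\|y\,\omega\|_{L^{p_0}}$, whose exponent is positive exactly when $p_0<\frac{d}{d-1}$.
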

\begin{proof}
    First, we observe that as $\mathrm{supp}(\varphi)\subseteq [0,T]\times B_R$, we have that $\varphi=\varphi\,\chi_R$. We estimate pointwisely neglecting the variable $t$ for simplicity:
    \begin{align*}
        &|((\rho^\sigma e^{V-\frac{\mathbb{V}}{2}}\varphi)\ast\omega_\sigma)(x)
        -\varphi(x)\,((\rho^\sigma e^{V-\frac{\mathbb{V}}{2}}\chi_R)\ast\omega_\sigma)(x)|\\
        &\leq \int_{\mathbb{R}^{d}} \rho^\sigma(x-y)\, e^{V(x-y)-\frac{\mathbb{V}(x-y)}{2}}\chi_R(x-y)\;|\varphi(x-y)-\varphi(x)|\;|\omega_\sigma(y)| \,\mathrm{d} y\\
        &\leq
        \; ((\rho^\sigma e^{V-\frac{\mathbb{V}}{2}}\chi_R)\ast |y\,\omega_\sigma|)(x)\;\|\varphi\|_{W_x^{1,\infty}} .
    \end{align*}
    This means it suffices to show
    \begin{align}
        \label{eq:goal_comm_slsi}
        \lim_{\sigma\to 0}\;\; \sup_{\substack{\varphi\in L^q(0,T;\, W_0^{1,\infty}(B_R)) \\ \|\varphi\|_{ L_t^q W_x^{1,\infty}}\leq 1}}\;\; \|((\rho^\sigma e^{V-\frac{\mathbb{V}}{2}}\chi_R)\ast|y\,\omega_\sigma|)\;\|\varphi\|_{W_x^{1,\infty}}\|_{L_{t,x}^2}=0.
    \end{align}
    We know that $\rho^\sigma e^{V-\frac{\mathbb{V}}{2}}\chi_R\in L^\infty(0,T;\,L^1(\mathbb{R}^{d}))\cap L^2(0,T;\,L^2(\mathbb{R}^{d}))$ by \ref{bnd:rho} from Lemma \ref{lem:estimates_slsi} and as $\chi_R$ is compactly supported. Thus, we can use the interpolation theorem to get the inclusion \begin{align}
    \label{eq:interpolation_rs}
    \rho^\sigma e^{V-\frac{\mathbb{V}}{2}}\chi_R\in L^r(0,T;L^s(\mathbb R^d)),
\end{align}
where $s\in (1,2)$ and $r=\frac{s}{s-1}$. We choose $s$ such that
\begin{align}
    \label{eq:rel_sp}
    1+\frac{1}{2}
    =\frac{1}{s}+\frac{1}{p_0}
    \quad
    \Leftrightarrow
    \quad
    \frac{1}{s}=\frac{3}{2}-\frac{1}{p_0},
\end{align}
which is possible as $p_0\in (1,2)$. We use \eqref{eq:interpolation_rs},  \eqref{eq:rel_sp} and apply the Young convolution inequality to get
\begin{align*}
    \|(\rho^\sigma e^{V-\frac{\mathbb{V}}{2}}\chi_R)\ast |y\,\omega_\sigma|\|_{L^r_tL^2_x}\le \|\rho^\sigma e^{V-\frac{\mathbb{V}}{2}}\chi_R\|_{L^r_tL^s_x}\,\|y\,\omega_\sigma\|_{L^{p_0}_x}.
\end{align*}
Next, we wish to apply H\"older in time so that
\begin{align*}
    \|((\rho^\sigma e^{V-\frac{\mathbb{V}}{2}}\chi_R)\ast|y\,\omega_\sigma|)\|\varphi\|_{W^{1,\infty}}\|_{L_{t,x}^2}\leq \|(\rho^\sigma e^{V-\frac{\mathbb{V}}{2}}\chi_R)\ast |y\,\omega_\sigma|\|_{L^r_tL^2_x}\,\|\varphi\|_{L^q_t W_x^{1,\infty}}
\end{align*}
whenever the temporal exponents satisfy
\begin{align*}
    \frac{1}{2}=\frac{1}{r}+\frac{1}{q}.
\end{align*}
But we already have $1/r=1-1/s$, so combining with \eqref{eq:rel_sp} we obtain
\begin{align*}
    \frac{1}{2}-\frac{1}{r}=\frac{1}{2}-\left(1-\frac{1}{s}\right)
    =-\frac{1}{2}+\left(\frac{3}{2}-\frac{1}{p_0}\right)=1-\frac{1}{p_0}
    =\frac{1}{q},
\end{align*}
by the choice $ q=\frac{p_0}{p_0-1}$. Thus in order to prove that \eqref{eq:goal_comm_slsi} holds, it suffices to show $y\,\omega_\sigma\to 0$ in $L^{p_0}(\mathbb{R}^{d})$. To that end, let us compute the $L^{p_0}$--norm of $y\,\omega_\sigma$ explicitly. We use the change of variables $y=\sigma z$ to get
\begin{align*}
    \|y\,\omega_\sigma\|_{L^{p_0}}^{p_0}
    &=\int_{\mathbb{R}^{d}} |y|^{p_0}\,|\omega(y/\sigma)|^{p_0}\,\sigma^{-dp_0}\,\mathrm{d} y
    = \sigma^{p_0-dp_0}\int_{\mathbb{R}^{d}} |z|^{p_0}\,|\omega(z)|^{p_0}\,\sigma^{d}\,\mathrm{d} z
    = \sigma^{p_0-dp_0+d}\,\|y\,\omega\|_{L^{p_0}}^{p_0}.
\end{align*}
The claim now immediately follows from \eqref{ass:moment} of Assumption \ref{assumptions:slsi} and the fact that
\begin{align*}
    1-d\left(1-\frac{1}{p_0}\right)>0
    \quad\Leftrightarrow\quad
    p_0<\frac{d}{d-1},
\end{align*}
where we treat $\infty=\frac{1}{0}$ (note also that $\frac{2(d+1)}{2d+1}<\frac{d}{d-1}$). As this estimate holds uniformly for all $\varphi$ with $\|\varphi\|_{L_t^q W_x^{1,\infty}}\leq 1$, we conclude the proof by duality.
\end{proof}

The following lemma is concerned with solutions $\{\varrho^\sigma\}_\sigma$ to \eqref{eq:mf_svgd} for which we do not have a priori estimates for $\{\varrho^\sigma\}_\sigma$, but only for the mollified quantities $\{\varrho^\sigma \ast \omega_{\sigma}\}_\sigma$. Therefore, we need to establish a stronger commutator result in which the localization is completely removed from the convolution.

\begin{lem}\label{lem:comm_est}
    Let Assumption \ref{assumptions:pot} hold, let $\{\varrho^\sigma\}_\sigma$ be the weak solutions to \eqref{eq:mf_svgd}, let $T,R>0$ and $q\in \mathcal{K}_d$. Then, with $m_0$ as in Assumption \ref{assumptions:pot},
    \begin{align}
        \label{eq:commutator_representation}
        \lim_{\sigma\to 0}\;\;\sup_{\substack{\varphi\in L^q(0,T;\, W_0^{m_0,\infty}(B_R)) \\ \|\varphi\|_{ L_t^q W_x^{m_0,\infty}}\leq 1}}\;\;
        \|(\varrho^\sigma \varphi)\ast\omega_\sigma
        -\varphi\,(\varrho^\sigma\ast\omega_\sigma)\|_{L_{t,x}^2}
        =0.
    \end{align}
\end{lem}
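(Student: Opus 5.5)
We follow the Taylor expansion strategy outlined in the introduction. We write the commutator pointwise as
\begin{align*}
((\varrho^\sigma\varphi)\ast\omega_\sigma-\varphi\,(\varrho^\sigma\ast\omega_\sigma))(x)=\int_{\mathbb{R}^{d}}\varrho^\sigma(x-y)\,\big(\varphi(x-y)-\varphi(x)\big)\,\omega_\sigma(y)\,\mathrm{d} y .
\end{align*}
Next we expand $\varphi(x-y)-\varphi(x)$ by Taylor's formula up to order $m_0-1$ with integral remainder of order $m_0$:
\begin{align*}
\varphi(x-y)-\varphi(x)=\sum_{1\le|\alpha|\le m_0-1}\frac{(-y)^\alpha}{\alpha!}\partial^\alpha\varphi(x)+\mathcal{R}_{m_0}(x,y),\qquad |\mathcal{R}_{m_0}(x,y)|\le C\|\nabla^{m_0}\varphi\|_{L^\infty}|y|^{m_0}.
\end{align*}
The remainder term is bounded using $\omega\ge0$: it is at most $C\|\varphi\|_{W^{m_0,\infty}}\,\varrho^\sigma\ast(|y|^{m_0}\omega_\sigma)$. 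Scaling gives $|y|^{m_0}\omega_\sigma=\sigma^{m_0}(|z|^{m_0}\omega)_\sigma$, and $|z|^{m_0}\omega\in L^1\cap L^2$ by \eqref{eq:pot_omega}. So this term is $\sigma^{m_0}$ times a mollification of $\varrho^\sigma$, and it must be controlled in $L^2$ on all of space, not just on $B_R$. Here lies the main obstacle, because only local bounds on $\varrho^\sigma\ast\omega_\sigma$ are available. Our first idea is to use Young's inequality with \ref{est_gf_kl}:
\begin{align*}
\|\varrho^\sigma\ast(|y|^{m_0}\omega_\sigma)\|_{L^2_x}\le\|\varrho^\sigma\|_{L^1}\,\sigma^{m_0}\|(|z|^{m_0}\omega)_\sigma\|_{L^2}=C\sigma^{m_0-d/2}.
\end{align*}
This tends to zero since $m_0>d/2$, which explains the hypothesis $m_0>d/2$. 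Moreover, the bound is uniform in time. Hölder in time with $\|\varphi\|_{L^q_tW^{m_0,\infty}_x}$, where $q\ge2$ as in $\mathcal{K}_d$, then gives a contribution of order $T^{1/2-1/q}\sigma^{m_0-d/2}\to0$.

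For the intermediate terms $1\le|\alpha|\le m_0-1$, each equals $C_\alpha\,\partial^\alpha\varphi(x)\,(\varrho^\sigma\ast(y^\alpha\omega_\sigma))(x)$ and is supported in $B_R$ in $x$. Since $y^\alpha\omega_\sigma=\sigma^{|\alpha|}(z^\alpha\omega)_\sigma$, it is enough to show that $\varrho^\sigma\ast(|z|^{k}\omega)_\sigma$ with $k\le m_0$ is bounded in $L^{p}((0,T)\times B_R)$ for some $p$ with $\frac12=\frac1p+\frac1q$. Then Hölder in time and space against $\|\partial^\alpha\varphi\|_{L^q_tL^\infty_x}$ together with the factor $\sigma^{|\alpha|}\to0$ finishes the argument. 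The condition $q\in\mathcal{K}_d$ makes $p=\frac{2q}{q-2}$ belong to $\mathcal{J}_d$. To obtain the $L^p$ bound we use $|z|^k\le1+|z|^{p_0^\ast\vee m_0}$ and treat $\varrho^\sigma\ast((1+|z|^{p_0^\ast\vee m_0})\omega)_\sigma$. On $B_R$, this should be bounded in $L^p$ by the same route as Lemma~\ref{lem:ref_a_priori_est}, using Lemma~\ref{lem:conv_mollifiers_scaled_x}, as in Step~2 of Lemma~\ref{prop:bounds_gradient}, to compare with $\varrho^\sigma\ast\omega_\sigma$ bounds. If only an $L^2$ bound is directly available, the alternative is to interpolate the $L^\infty_tL^1_x$ bound from \ref{est_gf_kl} with the $L^2_tL^2_x$ bound from \eqref{eq:pot_omega}. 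Failing that, we would use Vitali's theorem on $B_R$ as suggested in the introduction.

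Every bound above depends on $\varphi$ only through $\|\varphi\|_{L^q_tW^{m_0,\infty}_x}\le1$. Taking the supremum and letting $\sigma\to0$ therefore yields \eqref{eq:commutator_representation}.
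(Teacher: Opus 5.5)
Your treatment of the Taylor remainder is correct and matches the paper's Step 4 (Young's inequality with $\|\varrho^\sigma\|_{L^1}$ and $\||y|^{m_0}\omega_\sigma\|_{L^2}=\sigma^{m_0-d/2}\||z|^{m_0}\omega\|_{L^2}$). Skipping the paper's mollification of $\varphi$ is also harmless, since $\nabla^{m_0-1}\varphi$ is Lipschitz.

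The gap is in the intermediate terms $1\le|\alpha|\le m_0-1$, which carry the real difficulty. After pulling out $\sigma^{k}$, you need a $\sigma$-uniform bound on $\varrho^\sigma\ast(|z|^k\omega)_\sigma$ in $L^p((0,T)\times B_R)$. The local estimates of Lemma~\ref{prop:bounds_gradient} and \ref{bnd:rap_lp} come from the dissipation and exist only for the specific profile $\omega$. So the route of Lemma~\ref{lem:ref_a_priori_est} is not available for the profile $|z|^k\omega$: there is no $H^1(B_R)$ bound for it.

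Your comparison $|z|^k\le 1+|z|^{M}$, $M=p_0^\ast\vee m_0$, made in the \emph{rescaled} variable, does not close. The tail piece is $\varrho^\sigma\ast(|z|^{M}\omega)_\sigma=\sigma^{-M}\,\varrho^\sigma\ast(|y|^{M}\omega_\sigma)$. Lemma~\ref{lem:conv_mollifiers_scaled_x} gives decay only for the unscaled moment $|y|^M\omega_\sigma$, as in Step~2 of Lemma~\ref{prop:bounds_gradient}, so it bounds this piece only by $C\sigma^{-d/2}$ in $L^\infty_tL^2_x$. Even with your prefactor the bound is $C\sigma^{k-d/2}$, which does not vanish when $k\le d/2$, e.g.\ $k=1$ for every $d\ge 2$.

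Your fallback fails as well. The assumption \eqref{eq:pot_omega} gives no $\sigma$-uniform $L^2_{t,x}$ bound on $\varrho^\sigma\ast(|z|^k\omega)_\sigma$, since Young again gives $\sigma^{-d/2}$. Moreover, interpolating with $L^\infty_tL^1_x$ can never reach the time exponent $p>2$ needed to pair against $\varphi\in L^q_t$. The Vitali remark is not carried out.

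The missing idea is to split in the \emph{unscaled} variable and to use smallness in $L^1$. Set $f_\sigma:=\varrho^\sigma\ast(|y|^k\omega_\sigma)$.
\begin{itemize}
\item Since $\||y|^k\omega_\sigma\|_{L^1}=\sigma^k\||y|^k\omega\|_{L^1}$, you have $f_\sigma\to 0$ in $L^\infty(0,T;L^1(\mathbb{R}^d))$.
\item Pointwise, $f_\sigma\le\varrho^\sigma\ast\omega_\sigma+\varrho^\sigma\ast(|y|^{m_0}\omega_\sigma)$. The first term is bounded in $L^p((0,T)\times B_R)$ by \ref{bnd:rap_lp}, and the second is of order $\sigma^{m_0-d/2}$ in $L^\infty_tL^2_x$.
\end{itemize}
The paper partitions $(0,T)\times B_R$ according to which term dominates. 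Where $\varrho^\sigma\ast\omega_\sigma$ dominates, it interpolates $L^2_x$ between $L^1_x$ and $L^p_x$.

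A shorter fix is available. For any $\eta>0$ one has $|y|^k\le\eta^k+\eta^{k-m_0}|y|^{m_0}$, which gives $\|f_\sigma\|_{L^p(0,T;L^2(B_R))}\le C\eta^k+C\eta^{k-m_0}\sigma^{m_0-d/2}$. This tends to zero for $\eta=\sigma^\beta$ with $0<\beta<\frac{m_0-d/2}{m_0-k}$. Your split at $|z|=1$ corresponds to $\beta=1$, which lies outside this range as soon as $k\le d/2$.
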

\begin{proof}
    For this proof it is important to observe that there exists $p\in \mathcal{J}_d$ such that $p>2$ with $\tfrac{2}{p}+\tfrac{2}{q}=1$. This follows by checking the value of $p=\frac{2q}{q-2}$ for the boundary values of $q \in \mathcal{K}_d$. We will work with a fixed function $\varphi\in L^q(0,T;\, W_0^{m_0,\infty}(B_R))$ with $\|\varphi\|_{ L_t^q W_x^{m_0,\infty}}\leq 1$. Let us give the proof strategy. We carry out the argument in four steps. First, we approximate $\varphi$ with a smooth compactly supported function, so that in Step 2 we can apply a Taylor's approximation argument. Finally, Steps 3 and 4 are devoted to showing that error terms converge to zero.
    
    \underline{Step 1: Approximation of $\varphi$.} We choose a smooth compactly supported function $\gamma~\in~C_c^\infty(\mathbb{R}^{d})$ such that $\gamma\geq 0$ and $\int_{\mathbb{R}^{d}}\gamma\,\mathrm{d} x=1$. We define the mollification kernel $\gamma_\varepsilon(x):=\varepsilon^{-d}\gamma(x/\varepsilon)$ for any $\varepsilon\in (0,1)$. Then, we smooth the function $\varphi$ in the spatial variable by $\varphi_\varepsilon:=\varphi\ast\gamma_\varepsilon$. For any multiindex $\alpha\in \mathbb{N}^d$ with $|\alpha|\leq m$, we note that
    \begin{align}
        \label{eq:approx_vp_D0}
        &\|\varphi- \varphi_\varepsilon\|_{L_t^q L_x^\infty}
        \leq \|\nabla \varphi\|_{L_t^q L_x^{\infty}}\|y\,\gamma_\varepsilon\|_{L_x^1}\leq \varepsilon\| \varphi\|_{L_t^q W_x^{m_0+1,\infty}}\|y\,\gamma\|_{L_x^1}
        \leq C\,\varepsilon,\\
        \label{eq:approx_vp_bnd}
        &\|\partial^\alpha \varphi^\varepsilon\|_{L_t^q L_x^\infty}\leq
        \|\partial^\alpha \varphi\|_{L_t^q L_x^\infty}\|\gamma\|_{L_x^1}
        \leq 1,
    \end{align}
    where $C\geq 1$ is a constant independent of $\varphi$ and $\varepsilon$. Then, we use the Young convolution inequality, the H\"older inequality, \ref{est_gf_kl} of Lemma \ref{lem:gf_est} and the change of variables formula to estimate
    \begin{align*}
        &\|(\varrho^\sigma\varphi)\ast\omega_\sigma-(\varrho^\sigma\varphi^\varepsilon)\ast\omega_\sigma\|_{L_{t,x}^2}
        \leq \|\varrho^\sigma(\varphi-\varphi^\varepsilon)\|_{L_t^2 L_x^1}\|\omega_\sigma\|_{L^2_x}\\
        &\leq \|\varphi-\varphi^\varepsilon\|_{L_t^q L_x^\infty}\|\varrho^\sigma\|_{L_t^p L_x^1}\|\omega_\sigma\|_{L^2_x}
        \leq C\,\sigma^{-\frac{d}{2}}\|\varphi-\varphi^\varepsilon\|_{L_t^q L_x^\infty}.
    \end{align*}
    Furthermore, by the H\"older inequality, the fact that $\varphi-\varphi^\varepsilon$ is supported on $[0,T]\times \overline{B_{R+1}}$ and \ref{bnd:rap_lp} of Lemma \ref{lem:ref_a_priori_est} we can estimate
    \begin{align*}
        &\|\varphi(\varrho^\sigma\ast\omega_\sigma)-\varphi^\varepsilon(\varrho^\sigma\ast\omega_\sigma)\|_{L_{t,x}^2}
        \leq \|\varphi-\varphi^\varepsilon\|_{L_t^q L_x^\infty}\|\varrho^\sigma\ast\omega_\sigma\|_{L^p(0,T;\,L^2(B_{R+1}))}\\
        &\leq |B_{R+1}|^{\frac{1}{2}- \frac{1}{p}}\|\varphi-\varphi^\varepsilon\|_{L_t^q L_x^\infty}\|\varrho^\sigma\ast\omega_\sigma\|_{L^p((0,T)\times B_{R+1})}
        \leq C\|\varphi-\varphi^\varepsilon\|_{L_t^q L_x^\infty},
    \end{align*}
    where $C>0$ is some constant independent of $\sigma$ and $\varepsilon$. Hence, we deduce the following error estimate
    \begin{equation}\label{eq:commutator_proof_before_Taylors_expansion_regularization}
        \|(\varrho^\sigma\varphi)\ast\omega_\sigma-\varphi\,(\varrho^\sigma\ast\omega_\sigma)\|_{L_{t,x}^2}\leq \|(\varrho^\sigma\varphi^\varepsilon)\ast\omega_\sigma-\varphi^\varepsilon (\varrho^\sigma\ast\omega_\sigma)\|_{L_{t,x}^2}\!+\!2C\,\sigma^{-\frac{d}{2}}\,\|\varphi-\varphi^\varepsilon\|_{L_t^q L_x^\infty}.
    \end{equation}
    Now, by using \eqref{eq:approx_vp_D0} we will choose $\varepsilon=\sigma^d$ so that $\sigma^{-\frac{d}{2}}\,\|\varphi-\varphi^\varepsilon\|_{L_t^q L_x^\infty} \to 0$.
    
    \underline{Step 2: Taylor's expansion.} For a.e. $t\in [0,T]$, we use the Taylor's expansion and derive the pointwise estimate
    \begin{equation}\label{eq:Taylor_proof_commutator_estimate_Step2_sect4}
        \begin{split}
            &((\varrho_t^\sigma\varphi_t^\varepsilon)\ast \omega_\sigma)(x)
            =\int_{\mathbb{R}^{d}}\varrho_t^\sigma(x-y)\varphi_t^\varepsilon(x-y)\omega_\sigma(y)\,\mathrm{d} y\\
            &=\varphi_t^\varepsilon(x) (\varrho_t^\sigma\ast\omega_\sigma)(x)+\sum_{\substack{\alpha\in\mathbb{N}^d \\ 1\leq |\alpha|<m_0}}\frac{(-1)^{|\alpha|}}{\alpha!}\partial^\alpha\varphi_t^\varepsilon(x)\; (\varrho_t^\sigma\ast (y^\alpha \omega_\sigma))(x)
            +R_t^{\sigma,\varepsilon}(x),
        \end{split}
    \end{equation}
    where we define the remainder by
    \begin{align*}
        R_t^{\sigma,\varepsilon}(x)
        =\sum_{\substack{\alpha\in\mathbb{N}^d \\ |\alpha|=m_0}}\frac{m_0}{\alpha!}\int_{\mathbb{R}^{d}} \varrho_t^\sigma(x-y) \,y^\alpha\omega_\sigma(y)\int_0^1 (1-s)^{m_0-1}\partial^\alpha \varphi_t^\varepsilon(x-s y)\,\mathrm{d} s\,\mathrm{d} y.
    \end{align*}

    \underline{Step 3: Convergence of an individual summand $\alpha$.} We will prove that for each $|\alpha|< m_0$
    \begin{equation}\label{eq:step3_commutator_estimate_not_weighted}
        \|\partial^\alpha\varphi^\varepsilon\; (\varrho^\sigma\ast (y^\alpha \omega_\sigma))\|_{L_{t,x}^2} \to 0 \mbox{ as } \sigma \to 0.
    \end{equation}
    We first estimate using the H\"older inequality
    \begin{align}
        \label{eq:est_sum_alpha}
        \begin{split}
            &\|\partial^\alpha\varphi^\varepsilon\; (\varrho^\sigma\ast (y^\alpha \omega_\sigma))\|_{L_{t,x}^2}
            \leq \|\partial^\alpha\varphi^\varepsilon\|_{L_t^q L_x^\infty}\;\|\varrho^\sigma\ast (y^\alpha \omega_\sigma)\|_{L^p(0,T;L^2(B_R))}\\
            &\leq C\,\|\varrho^\sigma\ast (y^\alpha \omega_\sigma)\|_{L^p(0,T;L^2(B_R))} \leq C\,\|\varrho^\sigma\ast (|y|^k \omega_\sigma)\|_{L^p(0,T;L^2(B_R))},
        \end{split}
    \end{align}
    where $C>0$ is some constant independent of $\varepsilon$ and $\sigma$ and $k:=|\alpha|$. This estimate removes the dependence on $\varepsilon$. Next, we observe that
    \begin{equation}\label{eq:claim_conv_in_measure}
        \varrho^\sigma \ast (|x|^{k}\,\omega_\sigma) \to 0 \mbox{ in }L^\infty(0,T;\, L^1(B_R)).
    \end{equation}
    Indeed, this is a consequence of \ref{est_gf_kl} of Lemma \ref{lem:gf_est} and \ref{ass:sob_kern} of Assumption \ref{assumptions:pot} in conjunction with the Young convolution inequality, and then employing change of variables:
    \begin{align*}
        \| \varrho^\sigma \ast (|x|^{k}\,\omega_\sigma)\|_{L^{\infty}(0,T; L^1(\mathbb{R}^d))} &\leq \| \varrho^\sigma \|_{L^{\infty}(0,T; L^1(\mathbb{R}^d))} \, \| |x|^{k}\,\omega_\sigma \|_{L^1(\mathbb{R}^d)} \\
        &=\sigma^k\,\|\varrho_0\|_{L^1(\mathbb{R}^d)} \, \||x|^{k} \, \omega\|_{L^1(\mathbb{R}^d)} \to 0.
    \end{align*}
    As $|x|^k \leq 1 + |x|^{m_0}$ we estimate
    \begin{equation}\label{eq:sequence_of_interest_pointwise_bound}
        0 \leq \varrho^\sigma \ast (|x|^{k}\,\omega_\sigma)  \leq   \varrho^\sigma\ast\omega_\sigma +  \varrho^\sigma \ast (|x|^{m_0}\,\omega_\sigma) .
    \end{equation}
    This estimate inspires the following partition
    \begin{align*}
        A_\sigma&:=\{\varrho^\sigma\ast\omega_\sigma \geq \varrho^\sigma \ast (|x|^{m_0}\,\omega_\sigma) \}\cap [0,T]\times B_R,\\
        B_\sigma&:=\{\varrho^\sigma\ast\omega_\sigma < \varrho^\sigma \ast (|x|^{m_0}\,\omega_\sigma) \}\cap [0,T]\times B_R.
    \end{align*}
    Then on the set $A_\sigma$ we have the estimate
    \begin{align}
        \label{eq:set_A_1}
        \begin{split}
            &\|(\rho^\sigma\ast(|x|^k\omega_\sigma))\mathds{1}_{A_\sigma}\|_{L_t^p L_x^2}^p
            = \int_0^T
            \|(\rho_t^\sigma\ast(|x|^k\omega_\sigma))\mathds{1}_{A_\sigma}\|_{L_x^2}^p\,\mathrm{d} t\\
            &\leq \int_0^T
            \|(\rho_t^\sigma\ast(|x|^k\omega_\sigma))\mathds{1}_{A_\sigma}\|_{L_x^1}^{\theta p}\|(\rho_t^\sigma\ast(|x|^k\omega_\sigma))\mathds{1}_{A_\sigma}\|_{L_x^p}^{(1-\theta) p}\,\mathrm{d} t\\
            &\leq \|(\rho^\sigma\ast(|x|^k\omega_\sigma))\mathds{1}_{A_\sigma}\|_{L_t^\infty L_x^1}^{\theta p}\;\int_0^T
            \|(\rho_t^\sigma\ast(|x|^k\omega_\sigma))\mathds{1}_{A_\sigma}\|_{L_x^p}^{(1-\theta) p}\,\mathrm{d} t,
        \end{split}
    \end{align}
    where we have used interpolation with $\theta=\frac{1}{2}\frac{p-2}{p-1}$. Using \eqref{eq:sequence_of_interest_pointwise_bound}, the inequality determining the set $A_\sigma$, the fact that $(1-\theta)< 1$, and the H\"older inequality we get
    \begin{align}
        \label{eq:set_A_2}
        \begin{split}
            &\|(\rho^\sigma\ast(|x|^k\omega_\sigma))\mathds{1}_{A_\sigma}\|_{L_t^\infty L_x^1}^{\theta p}\int_0^T
            \|(\rho_t^\sigma\ast(|x|^k\omega_\sigma))\mathds{1}_{A_\sigma}\|_{L_x^p}^{(1-\theta) p}\,\mathrm{d} t\\
            &\leq 2\,T^{\theta}\,\|(\rho^\sigma\ast(|x|^k\omega_\sigma))\mathds{1}_{A_\sigma}\|_{L_t^\infty L_x^1}^{\theta p}
            \|\rho^\sigma\ast\omega_\sigma\|_{L^p(0,T;L^p(B_R))}^{(1-\theta)p}.
        \end{split}
    \end{align}
    Now by \ref{bnd:rap_lp} from Lemma \ref{lem:ref_a_priori_est}, $\{\varrho^\sigma\ast\omega_\sigma\}_\sigma$ is uniformly bounded in $L^p((0,T)\times B_R)$, thus using \eqref{eq:claim_conv_in_measure} we get
    \begin{align}
        \label{eq:set_A_3}
        \begin{split}
            \lim_{\sigma\to 0}
            \|(\rho^\sigma\ast(|x|^k\omega_\sigma))\mathds{1}_{A_\sigma}\|_{L_t^\infty L_x^1}^{\theta p}
            \|\rho^\sigma\ast\omega_\sigma\|_{L^p(0,T;L^p(B_R))}^{(1-\theta)p}
            =0.
        \end{split}
    \end{align}
    Let us now focus on the set $B_\sigma$. We have $\varrho^\sigma \ast (|x|^{m_0} \omega_\sigma) \to 0$ as $\sigma \to 0$ in $L^p(0,T; L^2(\mathbb{R}^d))$ by Lemma \ref{lem:conv_mollifiers_scaled_x}, since $m_0> \frac{d}{2}$. Thus, using \eqref{eq:sequence_of_interest_pointwise_bound} and the inequality determining  $B_\sigma$ we get the convergence
    \begin{align}
        \label{eq:set_B}
        &\lim_{\sigma\to 0}\|(\rho^\sigma\ast(|x|^k\omega_\sigma))\mathds{1}_{B_\sigma}\|_{L_t^p L_x^2}^p
        \leq \lim_{\sigma\to 0}2\|(\rho^\sigma\ast(|x|^{m_0}\omega_\sigma))\mathds{1}_{B_\sigma}\|_{L_t^p L_x^2}^p
        =0.
    \end{align}
    Thus by combining \eqref{eq:set_A_1}--\eqref{eq:set_B} we have shown that the RHS in \eqref{eq:est_sum_alpha} converges to zero, thus proving \eqref{eq:step3_commutator_estimate_not_weighted}.
    
    \smallskip
    
    \underline{Step 4: Convergence of remainder.}
    We estimate
    \begin{align*}
        |R_t^{\sigma,\varepsilon}(x)|
        \leq \sum_{\substack{\alpha\in\mathbb{N}^d \\ |\alpha|=m_0}}
        \frac{m_0}{\alpha!}\|\partial^\alpha \varphi_t^\varepsilon\|_{L_{x}^\infty} |(\varrho_t^\sigma\ast (|y^\alpha|\,\omega_\sigma))(x)|
        \leq C\,\|\varphi_t^{\varepsilon}\|_{W_x^{m_0,\infty}} (\varrho_t^\sigma\ast (|y|^{m_0}\omega_\sigma)(x)),
    \end{align*}
    for some constant $C>0$ independent of $\sigma$ and $\varepsilon$. From Lemma~\ref{lem:conv_mollifiers_scaled_x} and $m_0> \frac{d}{2}$ we deduce
    \begin{align*}
        \| R_t^{\sigma,\varepsilon} \|_{L^2_{t,x}} &\leq C\,\|\varphi_t^{\varepsilon}\|_{L^2_t W_x^{m_0,\infty}} \|\varrho_t^\sigma\ast (|y|^{m_0}\omega_\sigma)\|_{L^{\infty}_t L^2_x}\\
        &\leq C\,\|\varphi_t^{\varepsilon}\|_{L^q_t W_x^{m_0,\infty}}\, \|\varrho_t^\sigma \|_{L^{\infty}_t L^1_x}\, \| |y|^{m_0}\omega_\sigma\|_{L^2_x} \leq C\, \| |y|^{m_0}\omega_\sigma\|_{L^2_x} \to 0 \mbox{ as } \sigma\to 0,
    \end{align*}
    since $q>2$ taking into account \eqref{eq:approx_vp_bnd}. Thus, we deduce $R_t^{\sigma,\varepsilon} \to 0$ strongly in $L^2((0,T)\times \mathbb{R}^{d})$. Combining with \eqref{eq:step3_commutator_estimate_not_weighted}, we deduce that every summand in the Taylor's expansion \eqref{eq:Taylor_proof_commutator_estimate_Step2_sect4} converges strongly to 0 in $L^2((0,T)\times\mathbb{R}^{d})$. Hence, the first term on the RHS of \eqref{eq:commutator_proof_before_Taylors_expansion_regularization} converges to 0 while the second converges by the choice of $\varepsilon$ in terms of $\sigma$. As these estimates hold uniformly for all $\varphi$ with $\|\varphi\|_{L_t^q W_x^{m_0,\infty}}\leq 1$, we conclude the proof by duality.
\end{proof}

\begin{rem}
    We emphasize that the commutator argument developed in Lemma \ref{lem:comm_est} plays a crucial role beyond its application in the proof of Theorem \ref{thm:ex_weak_sol_nonloc_pde}. In particular, it will also be used in the proof of Lemma \ref{bnd:dt_rho_chi} to obtain bounds on the temporal derivative.
\end{rem}


\section{Estimate on the temporal derivatives}

\label{subsec:dt_est}

In this section, we prove boundedness of the temporal derivatives of the solutions to \eqref{eq:mf_svgd} and \eqref{eq:mf_slsi_svgd}, as well as of their mollifications with respect to the scaled kernel. These bounds are used, on the one hand, to establish continuity of the corresponding curves and, on the other hand, in combination with the Aubin-Lions lemma in Section \ref{sec:weak_conv} to obtain strong convergence.

\begin{lem}
    \label{bnd:dt_rho}
    Let $\{\rho^\sigma\}_\sigma$ be the weak solution to \eqref{eq:mf_slsi_svgd} and let Assumption \ref{assumptions:slsi} hold. Then, for any fixed $T,R>0$, $k=2$ and $q=\frac{p_0}{p_0-1}$ with $p_0$ defined in \ref{ass:four_mom}, the sequence $\{\partial_t\rho^\sigma\}_\sigma$ is uniformly bounded in the space $(L^q(0,T;\, W_0^{k,\infty}(B_R)))^\ast$.
    The same statement holds true for $\{\varrho^\sigma\}_\sigma$, the weak solution of \eqref{eq:mf_svgd}, whenever Assumption \ref{assumptions:pot} holds, $k=m_0+1$, $q\in \mathcal{K}_d$, and $m_0$ is as in Assumption \ref{assumptions:pot}.
\end{lem}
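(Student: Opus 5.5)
The approach is duality. For $\varphi\in L^q(0,T;W_0^{k,\infty}(B_R))$, we test the weak formulation of \eqref{eq:mf_slsi_svgd} (resp. \eqref{eq:mf_svgd}), move $k_\sigma=\omega_\sigma\ast\omega_\sigma$ onto both factors using radial symmetry of $\omega$, and obtain
\begin{align*}
\langle \partial_t\rho^\sigma,\varphi\rangle=-\int_0^T\!\!\int_{\mathbb{R}^{d}} \big((\rho^\sigma e^{V-\frac{\mathbb{V}}{2}}\nabla\varphi)\ast\omega_\sigma\big)\cdot\big(((\nabla\rho^\sigma+\rho^\sigma\nabla V)e^{V-\frac{\mathbb{V}}{2}})\ast\omega_\sigma\big)\,\mathrm{d} x\,\mathrm{d} t .
\end{align*}
Cauchy--Schwarz in $(t,x)$ reduces the claim to a uniform bound on $\|(\rho^\sigma e^{V-\frac{\mathbb{V}}{2}}\nabla\varphi)\ast\omega_\sigma\|_{L^2_{t,x}}$ by $C\|\varphi\|_{L^q_tW^{k,\infty}_x}$. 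The second factor is already uniformly bounded in $L^2_{t,x}$ by \ref{est_gf_dis} of Lemma \ref{lem:gf_est}.

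For \eqref{eq:mf_slsi_svgd} with $k=2$, apply Lemma \ref{lem:comm_est_slsi} to $\nabla\varphi\in L^q(0,T;W_0^{1,\infty}(B_R))$, which costs one derivative and explains $k=2$. Up to a term that is uniformly small (in particular bounded) for unit-norm $\varphi$, we replace the first factor by $\nabla\varphi\,((\rho^\sigma e^{V-\frac{\mathbb{V}}{2}}\chi_R)\ast\omega_\sigma)$. This is bounded by Hölder in time as $\|\nabla\varphi\|_{L^q_tL^\infty_x}\,\|(\rho^\sigma e^{V-\frac{\mathbb{V}}{2}}\chi_R)\ast\omega_\sigma\|_{L^{r}_tL^2_x}$ with $\frac12=\frac1q+\frac1r$, so $r=\frac{2p_0}{2-p_0}$. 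This is where the range of $p_0$ enters. The needed $L^r_tL^2_x$ bound should follow by interpolating \ref{bnd:rap_int} (the $L^\infty_tL^1_x$ and $L^2_tL^{q'}_x$ bounds, with $q'$ from the Sobolev range $\mathcal{I}_d$). Alternatively, one can avoid the commutator entirely and bound directly $\|(\rho^\sigma e^{V-\frac{\mathbb{V}}{2}}\chi_R)\ast\omega_\sigma\|$ via Young and the interpolation \eqref{eq:interpolation_rs}. I would first check which route gives exactly the stated $p_0$ interval. The expected constraint is $r$ reachable from $L^\infty_tL^1_x\cap L^2_tL^{2d/(d-2)}_x$, i.e. $r\le 2+\frac{4}{d}$-type restrictions, matching $p_0\le\frac{2(d+1)}{2d+1}$.

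For \eqref{eq:mf_svgd} with $k=m_0+1$, the same identity holds without weights. Apply Lemma \ref{lem:comm_est} to $\nabla\varphi\in L^q(0,T;W_0^{m_0,\infty}(B_R))$ with $q\in\mathcal{K}_d$, which replaces $(\varrho^\sigma\nabla\varphi)\ast\omega_\sigma$ by $\nabla\varphi\,(\varrho^\sigma\ast\omega_\sigma)$ up to a uniformly vanishing error. Then Hölder with $p=\frac{2q}{q-2}\in\mathcal{J}_d$ and \ref{bnd:rap_lp} of Lemma \ref{lem:ref_a_priori_est} give
\[
\|\nabla\varphi\,(\varrho^\sigma\ast\omega_\sigma)\|_{L^2_{t,x}}\le C\|\nabla\varphi\|_{L^q_tL^\infty_x}\|\varrho^\sigma\ast\omega_\sigma\|_{L^p((0,T)\times B_R)}.
\]

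The main obstacle is justifying the identity for weak solutions, namely that the weak formulation admits test functions only in $L^q_tW^{k,\infty}_x$ rather than smooth ones. This is handled by density together with the fact that the derived bound is continuous in that norm. The second obstacle is matching exponents in the weighted case. Linearity and the supremum over $\|\varphi\|\le1$ then give the dual-norm bound.
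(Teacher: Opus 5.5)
Your main argument is the paper's proof. You use duality and split $k_\sigma=\omega_\sigma\ast\omega_\sigma$ onto both factors. You then apply Lemma~\ref{lem:comm_est_slsi} (resp.\ Lemma~\ref{lem:comm_est}) to $\nabla\varphi$ to pull the test function out of the convolution, and finish with H\"older against the dissipation bound. The only difference is how you control $(\rho^\sigma e^{V-\frac{\mathbb{V}}{2}}\chi_R)\ast\omega_\sigma$. You use $L^r_tL^2(B_R)$, obtained by interpolating \ref{bnd:rap_int}, which reaches $r=2+\frac4d$ for $d\ge3$ and any $r<4$ for $d\le2$. That is more than the stated range of $p_0$ needs; your claim that it ``matches'' $p_0\le\frac{2(d+1)}{2d+1}$ is off, but the error is harmless. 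The paper instead uses \ref{bnd:rap_lp} in $L^r((0,T)\times B_R)$ with $r\in\mathcal{J}_d$, and that choice is exactly what produces the stated bounds on $p_0$.

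You should drop the suggested commutator-free alternative. Without the commutator you would apply Young's inequality to $\rho^\sigma e^{V-\frac{\mathbb{V}}{2}}\chi_R\in L^r_tL^s_x$ with $s<2$ against $\omega_\sigma\in L^p$ with $p>1$, and that norm grows like $\sigma^{-d(1-1/p)}$. The bound would not be uniform in $\sigma$. The commutator lemma avoids this only because its kernel is $|y|\,\omega_\sigma$, whose extra factor $\sigma$ compensates exactly this growth.
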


\begin{proof}
    We concentrate on the bound for $\{\rho^\sigma\}_\sigma$ the weak solution of \eqref{eq:mf_slsi_svgd}, where we use assumption \ref{ass:four_mom} to determine $q$. The same proof strategy holds for $\{\varrho^\sigma\}_\sigma$ the weak solution of \eqref{eq:mf_svgd} up to minor modifications, and here $q$ can be arbitrary in $\mathcal{K}_d$. We derive the uniform boundedness through duality. Take any test function $\psi\in C_c^\infty((0,T)\times B_R)$ and use \eqref{eq:mf_slsi_svgd} (respectively \eqref{eq:mf_svgd}) to derive
    \begin{align}
        \label{eq:test_on_mf_svgd}
        \begin{split}
            &\left\vert\int_0^T \int_{B_R} \psi\;\partial_t\rho_t^\sigma\,\mathrm{d} x\,\mathrm{d} t\right\vert\\
            &\leq\int_0^T \int_{\mathbb{R}^{d}}|(\rho_t^\sigma e^{V-\frac{\mathbb{V}}{2}}\nabla \psi)\ast\omega_\sigma|\; |((\nabla \rho_t^\sigma+\rho_t^\sigma \nabla V)e^{V-\frac{\mathbb{V}}{2}})\ast\omega_\sigma|\,\mathrm{d} x\,\mathrm{d} t.
        \end{split}
    \end{align}
    We apply the triangle inequality to get
    \begin{align}
        \label{eq:comm_trick}
        \begin{split}
            &|(\rho_t^\sigma e^{V-\frac{\mathbb{V}}{2}}\nabla \psi)\ast\omega_\sigma|\\
            &\leq |((\rho_t^\sigma e^{V-\frac{\mathbb{V}}{2}}\nabla \psi)\ast\omega_\sigma)-\nabla \psi\;((\rho_t^\sigma e^{V-\frac{\mathbb{V}}{2}}\chi_R)\ast\omega_\sigma)|
            +|\nabla\psi|\;|(\rho_t^\sigma e^{V-\frac{\mathbb{V}}{2}}\chi_R)\ast\omega_\sigma|.
        \end{split}
    \end{align}
    Then, if we employ the uniform bound \ref{est_gf_dis} of Lemma \ref{lem:gf_est} and implement the commutator argument \eqref{eq:conv_com_slsi} of Lemma \ref{lem:comm_est_slsi} (respectively \eqref{eq:commutator_representation} of Lemma \ref{lem:comm_est}) in \eqref{eq:comm_trick}, we only need to bound the following integral
    \begin{align}
        \label{eq:test_on_mf_svgd_better}
        \int_0^T \int_{B_R}|\nabla \psi|\;|(\rho_t^\sigma e^{V-\frac{\mathbb{V}}{2}}\chi_R)\ast\omega_\sigma|\; |((\nabla \rho_t^\sigma+\rho_t^\sigma \nabla V)e^{V-\frac{\mathbb{V}}{2}})\ast\omega_\sigma|\,\mathrm{d} x\,\mathrm{d} t.
    \end{align}
    By using \ref{bnd:rap_lp} from Lemma \ref{lem:ref_a_priori_est}, \ref{est_gf_dis} from Lemma \ref{lem:gf_est}, and the H\"older inequality, we estimate \eqref{eq:test_on_mf_svgd_better} from above by
    \begin{align*}
        &\|(\rho_t^\sigma e^{V-\frac{\mathbb{V}}{2}}\chi_R)\ast\omega_\sigma\|_{L^r((0,T)\times B_R)}\;
        \|((\nabla \rho_t^\sigma+\rho_t^\sigma \nabla V)e^{V-\frac{\mathbb{V}}{2}}) \ast\omega_\sigma\|_{L^2((0,T)\times B_R)}\;
        \|\nabla\psi\|_{L^q((0,T)\times B_R)}\\
        &\leq C\|\nabla\psi\|_{L^q((0,T)\times B_R)}
        \leq C|B_R|^{\frac{1}{q}}\,\|\psi\|_{L^q(0,T;\, W^{k,\infty}(B_R))},
    \end{align*}
    where $C>0$ is independent of $\sigma$ and $\psi$, and we define $r := \frac{2q}{q-2} \in \mathcal{J}_d \cap (2,\infty)$ so that $\frac{2}{q} + \frac{2}{r} = 1$; note that $\frac{2q}{q-2} \in \mathcal{J}_d \cap (2,\infty)$ for every $q \in \mathcal{K}_d$, and in the case of weak solutions to \eqref{eq:mf_slsi_svgd}, the exponent $q$ is contained in $\mathcal{K}_d$. As $\psi$ is arbitrary, we see that the desired bound holds true.
\end{proof}

\begin{cor}
    \label{bnd:dt_rho_chi_slsi}
    Let Assumption \ref{assumptions:slsi} hold and let $\{\rho^\sigma\}_\sigma$ be the weak solution to \eqref{eq:mf_slsi_svgd}. Then for any fixed $T,R>0$ and $q=\frac{p_0}{p_0-1}$, where $p_0$ is determined by \ref{ass:four_mom}, the sequence $\{\partial_t((\rho^\sigma e^{V-\frac{\mathbb{V}}{2}}\chi_R)\ast\omega_\sigma)\}_\sigma$ is uniformly bounded in the space $(L^q(0,T;\, W^{2,\infty}(\mathbb{R}^{d})))^\ast$.
\end{cor}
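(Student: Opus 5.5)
The plan is a duality argument that transfers the time-derivative bound through the convolution. Take a test function $\psi\in C_c^\infty((0,T)\times\mathbb{R}^{d})$. Since $\omega$ is radially symmetric, convolution with $\omega_\sigma$ is self-adjoint in $x$, so
\begin{align*}
\int_0^T\!\!\int_{\mathbb{R}^{d}} \psi\,\partial_t\big((\rho^\sigma e^{V-\frac{\mathbb{V}}{2}}\chi_R)\ast\omega_\sigma\big)\,\mathrm{d} x\,\mathrm{d} t
=\int_0^T\!\!\int_{\mathbb{R}^{d}} \Psi_\sigma\,\partial_t\rho^\sigma\,\mathrm{d} x\,\mathrm{d} t,
\qquad \Psi_\sigma:=e^{V-\frac{\mathbb{V}}{2}}\chi_R\,(\psi\ast\omega_\sigma).
\end{align*}
The function $\Psi_\sigma$ is supported in $[0,T]\times \overline{B_{2R}}$.

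Next, I would invoke Lemma \ref{bnd:dt_rho} on the ball $B_{2R+1}$. With $k=2$ and $q=\frac{p_0}{p_0-1}$, it gives
\begin{align*}
\Big|\int\!\!\int \Psi_\sigma\,\partial_t\rho^\sigma\Big|\le C\,\|\Psi_\sigma\|_{L^q(0,T;W^{2,\infty}(B_{2R+1}))}.
\end{align*}
Strictly, $\Psi_\sigma$ is only as regular as $\psi\ast\omega_\sigma$ allows, and $V\in W^{2,\infty}_{\mathrm{loc}}$. Rather than insisting on $W_0^{2,\infty}$ membership, I would track the proof of Lemma \ref{bnd:dt_rho} directly: it only uses $\|\nabla\Psi_\sigma\|_{L^q_tL^\infty_x}$ and $\|\Psi_\sigma\|_{L^q_tW^{1,\infty}_x}$, through Lemma \ref{lem:comm_est_slsi} and the Hölder estimate. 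So a $W^{1,\infty}$-type norm of $\Psi_\sigma$ on a fixed ball suffices. If a genuine density argument is needed, I would approximate $\Psi_\sigma$ by mollification in $W^{1,\infty}$ on compact sets.

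It then remains to bound $\|\Psi_\sigma\|_{L^q_tW^{1,\infty}_x}$ uniformly in $\sigma$ by $C\|\psi\|_{L^q_tW^{2,\infty}_x}$. Using the product rule, $e^{V-\frac{\mathbb{V}}{2}}\chi_R$ is bounded in $W^{1,\infty}$ because $V\in C^1$ and the support is compact. Young's inequality then gives
\begin{align*}
\|\partial^\beta(\psi\ast\omega_\sigma)\|_{L^\infty}\le \|\omega\|_{L^1}\|\partial^\beta\psi\|_{L^\infty}
\end{align*}
for $|\beta|\le1$, uniformly in $\sigma$, since $\|\omega_\sigma\|_{L^1}=\|\omega\|_{L^1}$ even though $\omega$ may change sign. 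Taking the supremum over $\psi$ with $\|\psi\|_{L^q_tW^{2,\infty}_x}\le1$ and using density of smooth functions then gives the claim.

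The main obstacle is the regularity mismatch. Lemma \ref{bnd:dt_rho} is stated for $W_0^{2,\infty}$ test functions, while the weight $e^{V-\frac{\mathbb{V}}{2}}$ is only locally $W^{2,\infty}$, and $\psi\ast\omega_\sigma$ need not have bounded second derivatives uniformly when $\omega$ is singular. I would handle this by checking that the constant in Lemma \ref{bnd:dt_rho} really depends only on $\|\nabla\Psi\|_{L^q_tL^\infty_x}$ and $\|\Psi\|_{L^q_tW^{1,\infty}_x}$. This is the case, because Lemma \ref{lem:comm_est_slsi} is formulated for $W^{1,\infty}_0$ test functions, so the argument should go through with first-order control only.
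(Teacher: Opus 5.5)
There is a genuine gap in the step where you replace Lemma \ref{bnd:dt_rho} by a first-order version of it.

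\textbf{Why first-order control is not enough.} You claim that its proof only uses $\|\nabla\Psi\|_{L^q_tL^\infty_x}$ and $\|\Psi\|_{L^q_tW^{1,\infty}_x}$, but this is not so. In the weak formulation the test function enters the convolution as $\nabla\Psi$. Lemma \ref{lem:comm_est_slsi} is therefore applied with $\varphi=\nabla\Psi$. Its pointwise bound $|\varphi(x-y)-\varphi(x)|\le |y|\,\|\varphi\|_{W^{1,\infty}}$ then costs the Lipschitz constant of $\nabla\Psi$, i.e.\ $\|\nabla^2\Psi\|_{L^\infty}$. That is exactly why Lemma \ref{bnd:dt_rho} is stated with $k=2$. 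You conflated the $W^{1,\infty}_0$ requirement on $\varphi$ with one on $\Psi$. If you drop the commutator, you would instead need an $L^r_tL^2_x$ bound, $r=\frac{2q}{q-2}$, on $(\rho^\sigma e^{V-\frac{\mathbb{V}}{2}}\chi_R)\ast|\omega_\sigma|$. The paper's estimates do not give this: \ref{bnd:rap_lp} concerns convolution with $\omega_\sigma$ on $B_R$, and $\omega$ may change sign. So your final bound on $\|\Psi_\sigma\|_{L^q_tW^{1,\infty}_x}$ does not feed into anything that is actually proved.

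\textbf{The obstacle is not real.} Derivatives commute with convolution, $\partial^\beta(\psi\ast\omega_\sigma)=(\partial^\beta\psi)\ast\omega_\sigma$. Hence for $|\beta|\le 2$,
\[
\|\partial^\beta(\psi\ast\omega_\sigma)\|_{L^\infty}\le\|\omega\|_{L^1}\|\partial^\beta\psi\|_{L^\infty}
\]
uniformly in $\sigma$, whatever the singularity of $\omega$. This is the same Young argument you already use for $|\beta|\le 1$. Likewise $e^{V-\frac{\mathbb{V}}{2}}\chi_R\in W^{2,\infty}(\mathbb{R}^{d})$: by \ref{ass:quad}, $V\in C^1\cap W^{2,\infty}_{\mathrm{loc}}$, so the second derivatives $e^V(\nabla^2V+\nabla V\otimes\nabla V)$ are locally bounded, and $\chi_R$ has compact support. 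The Leibniz rule to second order then gives
\[
\|\Psi_\sigma\|_{L^q_tW^{2,\infty}_x}\le C\,\|\omega\|_{L^1}\,\|e^{V-\frac{\mathbb{V}}{2}}\chi_R\|_{W^{2,\infty}}\,\|\psi\|_{L^q_tW^{2,\infty}_x}.
\]
Lemma \ref{bnd:dt_rho} then applies exactly as stated, and this is the paper's proof. The rest of your argument matches it: the duality step, self-adjointness from radial symmetry, and the support of $\Psi_\sigma$ in $\overline{B_{2R}}$.
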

\begin{proof}
    We employ a duality argument. Take any test function $\psi\in C_c^\infty((0,T)\times \mathbb{R}^{d})$ and compute
    \begin{align*}
        \left\vert \int_0^T\int_{\mathbb{R}^{d}} \partial_t\,((\rho_t^\sigma e^{V-\frac{\mathbb{V}}{2}}\chi_R)\ast\omega_\sigma)\; \psi\,\mathrm{d} x\,\mathrm{d} t\right\vert
        =\left\vert \int_0^T\int_{\mathbb{R}^{d}} \partial_t\rho_t^\sigma\;  (\psi\ast \omega_\sigma)\, e^{V-\frac{\mathbb{V}}{2}}\chi_R\,\mathrm{d} x\,\mathrm{d} t\right\vert.
    \end{align*}
    Then, by Lemma \ref{bnd:dt_rho}, there exists a constant $C>0$ independent of $\sigma$ such that
    \begin{align*}
        \left\vert \int_0^T\int_{B_{2R}} \partial_t\rho_t^\sigma \; (\psi\ast \omega_\sigma)\, e^{V-\frac{\mathbb{V}}{2}}\chi_R\,\mathrm{d} x\,\mathrm{d} t\right\vert
        \leq C\;\|(\psi\ast \omega_\sigma) \,e^{V-\frac{\mathbb{V}}{2}}\chi_R\|_{L_t^q W_x^{2,\infty}}.
    \end{align*}
    Since $e^{V-\frac{\mathbb{V}}{2}}\chi_R$ is supported on $B_{2R}$ and $\omega_\sigma\in L^1(\mathbb{R}^{d})$, we find with Young's convolution inequality, \ref{ass:quad} and the change of variables formula
    \begin{align*}
        \|(\psi\ast \omega_\sigma) e^{V-\frac{\mathbb{V}}{2}}\chi_R\|_{L_t^q W_x^{2,\infty}}
        &\leq 4\,\|\psi\ast \omega_\sigma \|_{L_t^q W_x^{2,\infty}}\|e^{V-\frac{\mathbb{V}}{2}}\chi_R\|_{W^{2,\infty}}\\
        &\leq 4\,\|\psi\|_{L_t^q W_x^{2,\infty}}\|\omega\|_{L^1}\|e^{V-\frac{\mathbb{V}}{2}}\chi_R\|_{W^{2,\infty}}.
    \end{align*}
    Taking the supremum over all $\|\psi\|_{L_t^q W_x^{2,\infty}}\leq 1$ proves the claim.
\end{proof}

\begin{lem}
    \label{bnd:dt_rho_chi}
    Let Assumption \ref{assumptions:pot} hold and let $\{\varrho^\sigma\}_\sigma$ be the weak solution to \eqref{eq:mf_svgd}. Then, for all fixed $T,R>0$ and $q\in \mathcal{K}_d$, the sequence $\{\partial_t(\varrho^\sigma\ast\omega_\sigma)\}_\sigma$ is uniformly bounded in the space $(L^q(0,T;\, W_0^{m_0+1,\infty}(B_R)))^\ast$, where $m_0$ is as in Assumption \ref{assumptions:pot}.
\end{lem}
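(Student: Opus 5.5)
We argue by duality, as in Corollary \ref{bnd:dt_rho_chi_slsi}, but the test function now ends up inside the mollification and is no longer supported in $B_R$, so the support issue has to be handled. Take $\psi\in C_c^\infty((0,T)\times B_R)$ and write
\begin{align*}
\int_0^T\int_{\mathbb{R}^{d}} \partial_t(\varrho^\sigma\ast\omega_\sigma)\,\psi\,\mathrm{d} x\,\mathrm{d} t=\int_0^T\int_{\mathbb{R}^{d}} \partial_t\varrho^\sigma\,(\psi\ast\omega_\sigma)\,\mathrm{d} x\,\mathrm{d} t,
\end{align*}
using that $\omega$ is radially symmetric. The function $\psi\ast\omega_\sigma$ is smooth with $\|\psi\ast\omega_\sigma\|_{L^q_tW^{m_0+1,\infty}_x}\le \|\omega\|_{L^1}\|\psi\|_{L^q_tW^{m_0+1,\infty}_x}$, uniformly in $\sigma$. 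However, it is not compactly supported in $B_R$, since $\omega$ may have global support. So Lemma \ref{bnd:dt_rho} cannot be applied directly.

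To fix the support, split $\psi\ast\omega_\sigma=\chi_{2R}(\psi\ast\omega_\sigma)+(1-\chi_{2R})(\psi\ast\omega_\sigma)$. The first piece lies in $L^q(0,T;W_0^{m_0+1,\infty}(B_{4R}))$. Its norm is bounded by $C\|\psi\|_{L^q_tW^{m_0+1,\infty}_x}$ by the Leibniz rule, so Lemma \ref{bnd:dt_rho} applied with radius $4R$ gives the required uniform bound.

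The tail piece $\phi_\sigma:=(1-\chi_{2R})(\psi\ast\omega_\sigma)$ is the main obstacle. On its support $|x|\ge 2R$, while $\psi$ is supported in $B_R$. Hence every derivative satisfies $|\partial^\alpha\phi_\sigma(x)|\lesssim \|\psi\|_{W^{m_0+1,\infty}}|B_R|\sup_{|z|\ge |x|/2}|\omega_\sigma(z)|$. This is small but not compactly supported, so we go back to the weak formulation of \eqref{eq:mf_svgd} with test function $\phi_\sigma$:
\begin{align*}
\Big|\int_0^T\!\!\int \partial_t\varrho^\sigma\,\phi_\sigma\Big|\le \|(\varrho^\sigma\nabla\phi_\sigma)\ast\omega_\sigma\|_{L^2_{t,x}}\,\|(\nabla\varrho^\sigma+\varrho^\sigma\nabla V)\ast\omega_\sigma\|_{L^2_{t,x}}.
\end{align*}
The second factor is uniformly bounded by \ref{est_gf_dis}. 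For the first factor, Young's inequality gives the bound $\|\varrho^\sigma\|_{L^\infty_tL^1_x}\|\nabla\phi_\sigma\|_{L^2_tL^\infty_x}\|\omega_\sigma\|_{L^2}$. The factor $\|\omega_\sigma\|_{L^2}\sim\sigma^{-d/2}$ has to be compensated by decay of $\nabla\phi_\sigma$, and this is where the difficulty lies. Writing $\nabla\phi_\sigma=-\nabla\chi_{2R}\,(\psi\ast\omega_\sigma)+(1-\chi_{2R})(\psi\ast\nabla\omega_\sigma)$ is not an option, because $\omega$ need not be differentiable. Instead, move the derivative onto $\psi$, giving $\nabla(\psi\ast\omega_\sigma)=(\nabla\psi)\ast\omega_\sigma$, and use the moment assumption \eqref{eq:pot_omega}. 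For $|x|\ge 2R$,
\begin{align*}
|(\nabla\psi\ast\omega_\sigma)(x)|\le \|\nabla\psi\|_{L^\infty}\int_{|y|\ge R}|\omega_\sigma(y)|\,\mathrm{d} y\le \|\nabla\psi\|_{L^\infty}(\sigma/R)^{m_0}\,\||y|^{m_0}\omega\|_{L^1}.
\end{align*}
A better route is to avoid the sup bound altogether. Estimate $(\varrho^\sigma\nabla\phi_\sigma)\ast\omega_\sigma$ in $L^2_x$ by $\|\varrho^\sigma\|_{L^1}\,\|\nabla\phi_\sigma\|_{L^\infty}\,\|\omega_\sigma\mathds{1}\|_{L^2}$ after splitting $\omega_\sigma$ into its near and far parts. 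Alternatively, use Cauchy--Schwarz pointwise with $|\nabla\phi_\sigma|\le C\int_{|y|\ge R}|\omega_\sigma|$ together with $\||y|^{m_0}\omega_\sigma\|_{L^2}=\sigma^{m_0-d/2}\||y|^{m_0}\omega\|_{L^2}\to0$, which holds since $m_0>d/2$ (Lemma \ref{lem:conv_mollifiers_scaled_x}). Either way, the tail contribution is at most $C\|\psi\|_{L^q_tW^{m_0+1,\infty}_x}$ uniformly in $\sigma$, and in fact it vanishes as $\sigma\to0$.

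Combining the two pieces, taking the supremum over $\|\psi\|_{L^q_tW_0^{m_0+1,\infty}(B_R)}\le1$ and using density of $C_c^\infty$ functions yields the claim. The fact that $\psi$ carries $m_0+1$ derivatives, rather than $m_0$, is what makes the commutator step hidden inside Lemma \ref{bnd:dt_rho}, which relies on Lemma \ref{lem:comm_est} applied to $\nabla\psi$, work with the exponent $q\in\mathcal{K}_d$.
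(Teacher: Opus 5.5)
Your argument is correct, but it is organized differently from the paper's. The paper does not split $\psi\ast\omega_\sigma$. It inserts it directly into the weak formulation and Taylor-expands $\nabla\psi(x-y)$ about $x$. This gives a pointwise majorant $|\nabla\psi\ast\omega_\sigma|\le\sigma^{m_0}\Phi_{m_0}+\sum_{k<m_0}\sigma^k\varphi_k$ with each $\varphi_k$ supported in $B_{2R}$. The paper pushes this majorant through $(\,\cdot\,\varrho^\sigma)\ast\omega_\sigma$ (using $\omega\ge0$) and applies Lemma~\ref{lem:comm_est} and the bound \ref{bnd:rap_lp} to each $\varphi_k$. The global remainder costs $\sigma^{m_0}\|\omega_\sigma\|_{L^2}=\sigma^{m_0-d/2}\|\omega\|_{L^2}$.

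You instead cut off with $\chi_{2R}$. You handle the near part by quoting Lemma~\ref{bnd:dt_rho} on a larger ball, which packages exactly the same commutator and local $L^p$ ingredients. You get the $\sigma^{m_0}$ decay of the far part from support separation and Chebyshev, $\int_{|y|\ge R}|\omega_\sigma|\le(\sigma/R)^{m_0}\||y|^{m_0}\omega\|_{L^1}$, rather than from Taylor's theorem. Your route is more modular: on the tail it needs only $\|\psi\|_{W^{1,\infty}}$, and it shows the tail vanishes as $\sigma\to0$. The paper's route avoids the enlarged ball but has to build the majorant by hand. Both close on the same count, $m_0>d/2$ against $\|\omega_\sigma\|_{L^2}\sim\sigma^{-d/2}$.

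The tail step needs tidying.
\begin{enumerate}
\item The bound through $\sup_{|z|\ge|x|/2}|\omega_\sigma(z)|$ is not available, since $\omega$ is only assumed to lie in $L^1\cap L^2$ with moments. Drop it.
\item Your displayed $L^1$-tail estimate is already the whole argument. Combine it with the identical bound for $\nabla\chi_{2R}\,(\psi\ast\omega_\sigma)$ on $\{2R\le|x|\le 4R\}$. This gives $\|\nabla\phi_\sigma\|_{L^\infty_x}\le C_R\,\sigma^{m_0}\|\psi\|_{W^{1,\infty}_x}$, and hence $\|(\varrho^\sigma\nabla\phi_\sigma)\ast\omega_\sigma\|_{L^2_{t,x}}\le C\,\sigma^{m_0-d/2}\|\psi\|_{L^q_tW^{1,\infty}_x}$. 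You should state this conclusion explicitly.
\item Remove the ``better routes'' that follow. The near/far splitting is too vague to check. The Cauchy--Schwarz variant via $\||y|^{m_0}\omega_\sigma\|_{L^2}$ only bounds $\nabla\phi_\sigma$ by $\sigma^{m_0-d/2}$. After the unavoidable factor $\|\omega_\sigma\|_{L^2}\sim\sigma^{-d/2}$ you would be left with $\sigma^{m_0-d}$, which is unbounded when $d/2<m_0<d$.
\end{enumerate}
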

\begin{proof}
    Let us give a proof strategy. In Step 1, we reduce the problem to showing that a specific function is uniformly bounded. Then, in Step 2 we find a decomposition of the function via the Taylor approximation. Finally, in Step 3 we employ this decomposition and we close the proof by showing the resulting terms in the decomposition are bounded.
    
    \underline{Step 1: Problem reduction.}  Take any test function $\psi\in C_c^\infty([0,T]\times B_R)$. We use the weak formulation of \eqref{eq:mf_svgd} and the Cauchy-Schwarz inequality to compute
    \begin{align*}
        &\left\vert \int_0^T\int_{B_R} \partial_t\,(\varrho_t^\sigma\ast\omega_\sigma)\; \psi\,\mathrm{d} x\,\mathrm{d} t\right\vert
        =\left\vert \int_0^T\int_{\mathbb{R}^{d}} \partial_t\varrho_t^\sigma\;  (\psi\ast \omega_\sigma) \,\mathrm{d} x\,\mathrm{d} t\right\vert\\
        &=\left\vert \int_0^T\int_{\mathbb{R}^{d}} (\nabla\varrho^\sigma+\varrho^\sigma\nabla V)\ast\omega_\sigma\cdot  ((\nabla\psi\ast \omega_\sigma)\varrho^\sigma)\ast\omega_\sigma \,\mathrm{d} x\,\mathrm{d} t\right\vert\\
        &\leq \|(\nabla\varrho^\sigma+\varrho^\sigma\nabla V)\ast\omega_\sigma\|_{L_{t,x}^2}\|((\nabla\psi\ast \omega_\sigma)\varrho^\sigma)\ast\omega_\sigma\|_{L_{t,x}^2}.
    \end{align*}
    The first term of the RHS is uniformly bounded by \ref{est_gf_dis} of Lemma \ref{lem:gf_est}. Thus, we only need to estimate the remaining term.
    
    \underline{Step 2: Convolution decomposition.} We would like to use a commutator argument for the function $\nabla\psi\ast\omega_\sigma$ as described in Lemma \ref{lem:comm_est}. However, we observe that in general the function $\nabla\psi\ast\omega_\sigma$ is not compactly supported and thus the lemma is not directly applicable.
    
    Hence, we derive a decomposition as in Lemma \ref{lem:comm_est}. We use the Taylor's expansion to find a constant $C>0$ independent of $\sigma$ such that the following pointwise bound holds
    \begin{align*}
        |(\nabla\psi_t\ast\omega_\sigma)(x)|
        &\leq C\||y|^{m_0}\omega_\sigma\|_{L^1}\|\nabla^{m_0+1}\psi_t\|_{L^\infty}+C\sum_{k=0}^{m_0-1}
        \||y|^k\omega_\sigma\|_{L^1}
        |\nabla^{k+1} \psi_t(x)|\\
        &\leq C\|(1+|y|^{m_0})\,\omega\|_{L^1}\left(\sigma^{m_0}\|\nabla^{m_0+1}\psi_t\|_{ L^\infty}+\sum_{k=0}^{m_0-1}
        \sigma^{k}\|\nabla^{k+1} \psi_t\|_{L^\infty}\chi_R(x)\right),
    \end{align*}
    where we used the change of variables formula to derive $\||y|^k\omega_\sigma\|_{L^1}=\sigma^{k}\||y|^k\omega\|_{L^1}$ for all $k\in \{1,\ldots,m_0\}$. Thus, there exists non-negative functions $\varphi_0,\ldots,\varphi_{m_0-1}\in C^0([0,T];\, C_c^\infty(B_{2R}))$ and non-negative function $\Phi_{m_0}\in C^0([0,T];\,C^\infty(\mathbb{R}^{d}))$ such that
    \begin{align}
        \label{eq:conv_decomp}
        &|\nabla\psi\ast \omega_\sigma|
        \leq \sigma^{m_0}\Phi_{m_0}+\sum_{k=0}^{m_0-1} \sigma^{k}\varphi_k,\\
        \label{eq:decomp_norm}
        &\max\{\|\varphi_0\|_{L_t^q W_x^{m_0+1,\infty}},\,\ldots,\,\|\varphi_{m_0-1}\|_{L_t^q W_x^{m_0+1,\infty}},\,\|\Phi_{m_0}\|_{L_t^q L_x^\infty}\}\leq \|\psi\|_{L_t^q W_x^{m_0+1,\infty}}.
    \end{align}
    
    \underline{Step 3: Derivation of the estimate and conclusion.}
    By Step 1, we only need to focus on estimating $\|((\nabla\psi\ast \omega_\sigma)\varrho^\sigma)\ast\omega_\sigma\|_{L_{t,x}^2}$. By using the decomposition \eqref{eq:conv_decomp}, we derive the bound
    \begin{align*}
        \|((\nabla\psi\ast \omega_\sigma)\varrho^\sigma)\ast\omega_\sigma\|_{L_{t,x}^2}
        \leq \sigma^{m_0}\|(\Phi_{m_0}\varrho^\sigma)\ast\omega_\sigma\|_{L_{t,x}^2}+\sum_{k=0}^{m_0-1} \sigma^{k}\|(\varphi_k\varrho^\sigma)\ast\omega_\sigma\|_{L_{t,x}^2}.
    \end{align*}
    To bound an individual summand for $0 \leq k \leq m-1$, we first use the triangle inequality
    \begin{align*}
        \|(\varphi_k\varrho^\sigma)\ast\omega_\sigma\|_{L_{t,x}^2}
        \leq \|(\varphi_k\varrho^\sigma)\ast\omega_\sigma-\varphi_k(\varrho^\sigma\ast\omega_\sigma)\|_{L_{t,x}^2}+\|\varphi_k(\varrho^\sigma\ast\omega_\sigma)\|_{L_{t,x}^2}.
    \end{align*}
    We see by the commutator argument \eqref{eq:commutator_representation} of Lemma \ref{lem:comm_est}, it is sufficient to bound the second term on the RHS. We then compute by the H\"older inequality, \ref{bnd:rap_lp} of Lemma \ref{lem:ref_a_priori_est} and the inequality \eqref{eq:decomp_norm}
    \begin{align*}
        &\|\varphi_k(\varrho^\sigma\ast\omega_\sigma)\|_{L_{t,x}^2}
        \leq \|\varphi_k\|_{L_t^q W_x^{m_0+1,\infty}}\|\varrho^\sigma\ast\omega_\sigma\|_{L^p(0,T;\, L^2(B_{2R}))}\\
        &\leq |B_{2R}|^{\frac{1}{2}-\frac{1}{p}}\|\psi\|_{L_t^q W_x^{m_0+1,\infty}}\|\varrho^\sigma\ast\omega_\sigma\|_{L^p((0,T)\times B_{2R})}
        \leq C\,\|\psi\|_{L_t^q W_x^{m_0+1,\infty}},
    \end{align*}
    for some constant $C>0$, independent of $\sigma$, and with $p:=\frac{2q}{q-2}$, so that $\frac{2}{p}+\frac{2}{q}=1$ and $p\in\mathcal{J}_d$ whenever $q\in\mathcal{K}_d$ (see the beginning of the proof of Lemma~\ref{lem:comm_est}). To bound the remainder, we use the H\"older inequality, then the Young convolution inequality and finally Lemma \ref{lem:conv_mollifiers_scaled_x} to get
    \begin{align*}
        \sigma^{m_0}\|(\Phi_{m_0}\varrho^\sigma)\ast\omega_\sigma\|_{L_{t,x}^2}
        \leq \sigma^{m_0}\|\Phi_{m_0}\|_{L_t^q L_x^\infty}\|\varrho^\sigma\ast\omega_\sigma\|_{L_t^p L_x^2}
        \leq \sigma^{m_0-\frac{d}{2}}\|\Phi_{m_0}\|_{L_t^q L_x^\infty}\|\varrho^\sigma\|_{L_t^p L_x^1}\|\omega\|_{L_x^2}.
    \end{align*}
    As $m_0>\frac{d}{2}$, we see that the remainder term is uniformly bounded for all $\sigma\in (0,1)$. By taking the supremum over all $\|\psi\|_{L_t^q W_x^{m_0+1,\infty}}\leq 1$ in the obtained estimate for $\|((\nabla\psi\ast \omega_\sigma)\varrho^\sigma)\ast\omega_\sigma\|_{L_{t,x}^2}$ we conclude the proof.
\end{proof}


\section{Weak convergences}

\label{sec:weak_conv}

We show that there exist curves $\rho,\varrho \in C^0([0,\infty); \mathscr{P}(\mathbb{R}^{d}))$. Moreover, along a (not relabeled) subsequence, the weak solutions $\{\rho^{\sigma}\}_{\sigma}$ to \eqref{eq:mf_slsi_svgd} and $\{\varrho^{\sigma}\}_{\sigma}$ to \eqref{eq:mf_svgd} converge to $\rho$ and $\varrho$ weakly. These convergences will be used in the proof of Theorem \ref{thm:ex_weak_sol_loc_pde} and Theorem \ref{thm:ex_weak_sol_nonloc_pde}.

\begin{lem}[Basic convergence results]\label{lem:basic_conv}
    Let Assumption \ref{assumptions:slsi} hold and let $\{\rho^\sigma\}_\sigma$ be the weak solution to \eqref{eq:mf_slsi_svgd}. Then, for all fixed $T,R>0$ and $M>\frac{d}{2}+2$ the following convergences hold (up to passing to some subsequence):
    \begin{enumerate}[label=(BC\arabic*)]
        \item\label{bnd:conv_cont_curve}  $\rho^\sigma \to \rho$ as $\sigma\rightarrow 0$ in $C^0([0,T]; H^{-M}(B_R))$;
        \item\label{bnd:conv_l1} $\rho^\sigma_t \rightharpoonup \rho_t$ as $\sigma\rightarrow 0$ weakly in $L^1(\mathbb{R}^{d})$ and narrowly for a.e. $t>0$;
        \item\label{rem:l1_weak_conv}
        $\rho^\sigma \rightharpoonup \rho$ as $\sigma\rightarrow 0$ in $L^1([0,T]\times B_R)$.
    \end{enumerate}
    Analogue convergences hold for the weak solutions $\{\varrho^\sigma\}_\sigma$ to \eqref{eq:mf_svgd} under the Assumption \ref{assumptions:pot} and assuming $M>\frac{d}{2}+m_0+1$, where $m_0$ is as in Assumption \ref{assumptions:pot}.
\end{lem}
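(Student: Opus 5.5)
We follow the standard compactness route: a uniform bound on $\partial_t\rho^\sigma$ in a negative Sobolev space, plus uniform integrability from the entropy bound, gives an Arzelà--Ascoli/Aubin--Lions type argument.

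\textbf{(BC1).} By Lemma \ref{bnd:dt_rho}, $\{\partial_t\rho^\sigma\}_\sigma$ is uniformly bounded in $(L^q(0,T;W_0^{2,\infty}(B_R)))^\ast$. Since $M>\frac d2+2$, Sobolev embedding gives $H^M_0(B_R)\hookrightarrow W_0^{2,\infty}(B_R)$, so $\partial_t\rho^\sigma$ is bounded in $L^{q'}(0,T;H^{-M}(B_R))$ with $q'=q/(q-1)>1$. This yields uniform H\"older equicontinuity in time:
\[
\|\rho^\sigma_t-\rho^\sigma_s\|_{H^{-M}(B_R)}\le C|t-s|^{1/q}.
\]
Pointwise in time, \ref{est_gf_kl} bounds $\rho^\sigma_t$ in $L^1(B_R)$. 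Since $L^1(B_R)\hookrightarrow C^0(\overline{B_R})^\ast$, and $H^{M}_0(B_R)\hookrightarrow C^0$ is compact for $M>d/2$, the embedding $L^1(B_R)\hookrightarrow H^{-M}(B_R)$ is compact (Schauder). We then apply Arzelà--Ascoli in $C^0([0,T];H^{-M}(B_R))$ and obtain a subsequence with a limit $\rho$. A diagonal argument over $T,R\in\mathbb N$ gives a single subsequence and a single limit defined on $[0,\infty)\times\mathbb R^d$.

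\textbf{(BC2).} For each fixed $t$ we upgrade this limit. By \ref{est_gf_kl}, $\rho^\sigma_t$ has uniformly bounded entropy, so it is uniformly integrable by de la Vall\'ee-Poussin. It also has a uniformly bounded moment $\int\rho^\sigma_t V$, and $V\ge\mathbb V$ is coercive, so the family is tight. Dunford--Pettis then gives weakly $L^1$-convergent subsequences. Every such limit must coincide with $\rho_t$, since it agrees on $C_c^\infty$ functions via (BC1) (we use $C_c^\infty\subset H^M_0$). Hence the whole subsequence converges weakly in $L^1$ for every $t$, and in particular for a.e.\ $t$. Tightness together with unit mass gives narrow convergence and $\rho_t\in\mathscr P(\mathbb R^d)$.

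\textbf{(BC3).} The bounds in \ref{est_gf_kl} are uniform in $t$, so $\{\rho^\sigma\}$ is uniformly integrable on $[0,T]\times B_R$. Dunford--Pettis gives a weak limit, which is identified with $\rho$ by testing with $C_c^\infty((0,T)\times B_R)$ and using (BC1).

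For $\{\varrho^\sigma\}$ the argument is the same. The temporal bound from Lemma \ref{bnd:dt_rho} now involves $W_0^{m_0+1,\infty}$, which is why we need $M>\frac d2+m_0+1$. Coercivity \eqref{eq:coercive} of $V$ gives tightness. The main obstacle is the identification step: making sure the $H^{-M}$ limit, the pointwise-in-time weak $L^1$ limits and the space-time limit are the same object, all along one diagonal subsequence valid for every $T$ and $R$.
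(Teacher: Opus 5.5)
Your proposal is correct and matches the paper's proof. (BC1) comes from the time-derivative bound of Lemma \ref{bnd:dt_rho}, transferred to $L^{q/(q-1)}(0,T;H^{-M}(B_R))$ by the embedding $H^M_0(B_R)\hookrightarrow W_0^{2,\infty}(B_R)$, which gives H\"older equicontinuity and then Arzel\`a--Ascoli. (BC2) and (BC3) come from the entropy and $V$-moment bounds, Dunford--Pettis, and identification of the limit through (BC1). Your explicit pointwise-compactness step (the compact embedding $L^1(B_R)\hookrightarrow H^{-M}(B_R)$ via Schauder's theorem) and your diagonal argument over $T,R$ fill in details the paper delegates to a reference.
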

\begin{proof}
    We focus on the weak solutions of \eqref{eq:mf_slsi_svgd} and point out minor differences for the weak solutions of \eqref{eq:mf_svgd}. The convergence \ref{bnd:conv_cont_curve} follows because by Lemma \ref{bnd:dt_rho}, $\{\partial_t \rho^\sigma_t\}$ is bounded in $(L^q(0,T;\, W_0^{2,\infty}(B_R)))^\ast$ (respectively $(L^q(0,T;\, W_0^{m_0+1,\infty}(B_R)))^\ast$), which by the Sobolev embedding $H_0^{M}(B_R)\hookrightarrow W_0^{2,\infty}(B_R)$ (respectively $H_0^{M}(B_R)\hookrightarrow W_0^{m_0+1,\infty}(B_R)$) gives a bound in $L^{\frac{q}{q-1}}(0,T;\, H^{-M}(B_R)))$. Hence, by a direct integration in time, $\{\rho^\sigma_t\}$ is bounded in $C^{\beta}([0,T]; H^{-M}(B_R))$ for some $\beta \in (0,1)$ and we can apply classical Arzela-Ascoli argument, see \cite[Equation 4.18]{carrillo_skrzeczkowski_warnett_2024_stein} for an example. For the proof of \ref{bnd:conv_l1}, we will use the bounds in \ref{est_gf_kl} from Lemma \ref{lem:gf_est} and the classical Dunford-Pettis Theorem together with a subsequence argument and identification of the limit for all $t>0$ obtained in \ref{bnd:conv_l1}, see \cite[Equation 4.19]{carrillo_skrzeczkowski_warnett_2024_stein} for an example. We observe that by assumption \ref{ass:quad} (respectively \ref{ass:pot}), the uniform bound
    \begin{align}
        \label{eq:uni_int}
        \sup_{\sigma\in (0,1)}\int_{\mathbb{R}^{d}} \rho_t^\sigma(x) |\ln(\rho_t^\sigma(x))|\,\mathrm{d} x+\int_{\mathbb{R}^{d}} \rho_t^\sigma(x)V(x)\,\mathrm{d} x
        <\infty
    \end{align}
    implies again that $\{\rho^\sigma\}_{\sigma}$ is equi-integrable and tight on $[0,T]\times \mathbb{R}^{d}$. Thus, by the Dunford-Pettis Theorem, the sequence is weakly precompact in $L^1([0,T]\times \mathbb{R}^{d})$. We take any weakly convergence subsequence (not relabeled) $\rho^\sigma\to \mu$ in $L^1([0,T]\times \mathbb{R}^{d})$ and identify the limit. To that end, for any $\psi\in C_c^\infty([0,T]\times \mathbb{R}^{d})$ we have by \ref{bnd:conv_cont_curve}
    \begin{align*}
        &\int_0^T \int_{\mathbb{R}^{d}} \mu\psi\,\mathrm{d} x\,\mathrm{d} t
        =\lim_{\sigma\to 0}\int_0^T \int_{\mathbb{R}^{d}} \rho^\sigma\psi\,\mathrm{d} x\,\mathrm{d} t
        =\lim_{\sigma\to 0}\int_0^T\langle \rho_t^\sigma, \psi_t\rangle_{H^{-M},H^{M}}\,\mathrm{d} t\\
        &=\int_0^T\langle \rho_t, \psi_t\rangle_{H^{-M},H^{M}}\,\mathrm{d} t
        =\int_0^T \int_{\mathbb{R}^{d}}\rho\,  \psi\,\mathrm{d} x\,\mathrm{d} t.
    \end{align*}
    As this limit holds for any converging subsequence we choose, it follows that $\rho^\sigma\rightharpoonup \rho$. Similarly, we show narrow convergence $\rho^\sigma\rightharpoonup \rho$. Indeed, by \eqref{eq:uni_int} we have that $\{\rho^\sigma\}_\sigma$ is tight, and is narrowly precompact by Prokhorov's theorem.
\end{proof}

\begin{lem}[Strong convergence]\label{conv:rho_slsi}
    Let Assumption \ref{assumptions:slsi} hold, let $T,R>0$ and let $\{\rho^\sigma\}_\sigma$ be the weak solutions to \eqref{eq:mf_slsi_svgd} and let $\rho$ be the corresponding curve of probability measures in Lemma \ref{lem:basic_conv}. Then, we have the strong convergence $(\rho^\sigma e^{V-\frac{\mathbb{V}}{2}}\chi_R)\ast\omega_\sigma\rightarrow \rho e^{V-\frac{\mathbb{V}}{2}}\chi_R$ as $\sigma\rightarrow 0$ in $L^2([0,T]\times B_R)$.
    
    \smallskip
    
    Analogously, under the Assumption \ref{assumptions:pot}, $\varrho^\sigma\ast\omega_\sigma \to \varrho$ strongly in $L^2([0,T]\times B_R)$.
\end{lem}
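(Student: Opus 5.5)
The plan is a compactness argument of Aubin--Lions type: extract a strongly convergent subsequence of the mollified quantities, then identify its limit through the weak convergence already established in Lemma \ref{lem:basic_conv}. I treat the weighted case \eqref{eq:mf_slsi_svgd} first and write $u^\sigma := (\rho^\sigma e^{V-\frac{\mathbb{V}}{2}}\chi_R)\ast\omega_\sigma$.

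\textbf{Step 1: compactness of $u^\sigma$.} By \ref{bnd:h1} of Lemma \ref{lem:estimates_slsi}, $\{u^\sigma\}_\sigma$ is bounded in $L^2(0,T;H^1(\mathbb{R}^{d}))$, hence in $L^2(0,T;H^1(B_R))$. By Corollary \ref{bnd:dt_rho_chi_slsi}, $\{\partial_t u^\sigma\}_\sigma$ is bounded in $(L^q(0,T;W^{2,\infty}(\mathbb{R}^{d})))^\ast$. Restricting test functions to $W_0^{2,\infty}(B_R)\supset H_0^M(B_R)$ with $M>\frac d2+2$ gives a bound in $L^{q'}(0,T;H^{-M}(B_R))$. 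The embeddings $H^1(B_R)\hookrightarrow\hookrightarrow L^2(B_R)\hookrightarrow H^{-M}(B_R)$ then allow the Aubin--Lions--Simon lemma. It yields, along a subsequence, $u^\sigma\to u$ strongly in $L^2((0,T)\times B_R)$.

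\textbf{Step 2: identification of the limit.} It remains to show that $u=\rho e^{V-\frac{\mathbb{V}}{2}}\chi_R$. Take $\psi\in C_c^\infty((0,T)\times B_R)$ and write
\[
\int\!\!\int u^\sigma\psi=\int\!\!\int \rho^\sigma e^{V-\frac{\mathbb{V}}{2}}\chi_R\,(\psi\ast\omega_\sigma)
\]
(here $\omega$ is radial). Split this as $\int\!\!\int\rho^\sigma e^{V-\frac{\mathbb V}{2}}\chi_R\psi$ plus an error. The error is bounded by $\|\rho^\sigma e^{V-\frac{\mathbb V}{2}}\chi_R\|_{L^1_{t,x}}\|\psi\ast\omega_\sigma-\psi\|_{L^\infty}\to0$, using \ref{est_gf_kl} and uniform continuity of $\psi$. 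The main term converges to $\int\!\!\int\rho\, e^{V-\frac{\mathbb V}{2}}\chi_R\psi$ by \ref{rem:l1_weak_conv}, because $e^{V-\frac{\mathbb V}{2}}\chi_R\psi$ is bounded and compactly supported. The limit is therefore identified, and it is independent of the subsequence, so the whole sequence (the one from Lemma \ref{lem:basic_conv}) converges.

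\textbf{Unweighted case.} For \eqref{eq:mf_svgd} I would argue the same way with $v^\sigma=\varrho^\sigma\ast\omega_\sigma$. Lemma \ref{prop:bounds_gradient} gives the $L^2(0,T;H^1(B_R))$ bound, and Lemma \ref{bnd:dt_rho_chi} gives the time-derivative bound in $(L^q(0,T;W_0^{m_0+1,\infty}(B_R)))^\ast$, hence in $L^{q'}(0,T;H^{-M}(B_R))$ with $M>\frac d2+m_0+1$. Aubin--Lions then applies on $B_R$ directly. The identification uses $\varrho^\sigma\rightharpoonup\varrho$ in $L^1$ together with $\psi\ast\omega_\sigma\to\psi$ uniformly; this only needs $\omega\in L^1$ and continuity of $\psi$, and the tail of $\psi\ast\omega_\sigma$ outside $B_R$ is harmless because $\varrho^\sigma$ is bounded in $L^1(\mathbb{R}^d)$.

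\textbf{Main obstacle.} The step I expect to be most delicate is Step 1. The $H^1$ bound is available on $\mathbb{R}^d$ (or on $B_R$ only, in the unweighted case), while the time-derivative bound is only local and dual to $W^{2,\infty}$ or $W^{m_0+1,\infty}$. To put these into a single Aubin--Lions triple on $B_R$, I would use the Hilbert chain $H^1(B_R)\subset L^2(B_R)\subset (H^M_0(B_R))^\ast$. This requires checking that $L^2(B_R)$ embeds continuously into $(H_0^M(B_R))^\ast$, and that the distributional time derivative on $(0,T)\times B_R$ coincides with the dual bound. Both checks are routine but need care with the restriction from $\mathbb{R}^d$ to $B_R$.
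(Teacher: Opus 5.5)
Your proposal is correct and follows essentially the same route as the paper. The paper also applies Aubin--Lions on $B_R$, combining the $L^2(0,T;H^1)$ bounds from \ref{bnd:h1} (resp.\ Lemma~\ref{prop:bounds_gradient}) with the time-derivative bounds of Corollary~\ref{bnd:dt_rho_chi_slsi} (resp.\ Lemma~\ref{bnd:dt_rho_chi}), then identifies the limit via $\psi\ast\omega_\sigma\to\psi$ uniformly and the weak $L^1$ convergence \ref{rem:l1_weak_conv}, and uses uniqueness of the limit to get convergence of the whole sequence.
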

\begin{proof}
    The proofs of $\{\rho^\sigma\}_\sigma$ and $\{\varrho^\sigma\}_\sigma$ are the exact same. We only show $\{\rho^\sigma\}_\sigma$ and point out differences for the other. First, we observe by \ref{bnd:h1} in Lemma \ref{lem:estimates_slsi} (respectively Lemma~\ref{prop:bounds_gradient}) that $\{(\rho_t^\sigma e^{V-\frac{\mathbb{V}}{2}}\chi_R)\ast\omega_\sigma\}_\sigma$ is uniformly bounded in $L^2(0,T;\, H^1(B_R))$, and by Corollary~\ref{bnd:dt_rho_chi_slsi} (respectively Lemma \ref{bnd:dt_rho_chi}) we have for $q=\frac{p_0}{p_0-1}$, with $p_0$ defined in \ref{ass:four_mom}, (respectively for any $q\in \mathcal{K}_d$) and $M>\frac{d}{2}+2$ (respectively $M>\frac{d}{2}+m_0+1$) that $\{\partial_t\,(\rho_t^\sigma e^{V-\frac{\mathbb{V}}{2}}\chi_R)\ast\omega_\sigma\}_\sigma$ is uniformly bounded in $L^q(0,T; H_0^{-M}(B_R))$, where we used the embedding $H_0^{M}(B_R)\hookrightarrow W^{2,\infty}(\mathbb{R}^{d})$ (respectively $H_0^{M}(B_R)\hookrightarrow W_0^{m_0+1,\infty}(B_R)$). The Aubin-Lions lemma yields (up to relabelling) a strongly convergent subsequence $(\rho^\sigma e^{V-\frac{\mathbb{V}}{2}}\chi_R)\ast\omega_\sigma\rightarrow \mu$ for some $\mu~\in~L^2([0,T]\times B_R)$. In order to identify the limit, we take a test function $\psi\in C_c^\infty([0,T]~\times~B_R)$ and note that we have strong convergence $\psi\ast\omega_\sigma\rightarrow \psi$ in $L^\infty([0,T]\times \mathbb{R}^{d})$. Thus, by using \ref{rem:l1_weak_conv} of Lemma \ref{lem:basic_conv} we have the weak convergence $\rho^\sigma\rightharpoonup\rho$ in $L^1([0,T]\times B_{2R})$. Then, we compute the limit
    \begin{align*}
        &\int_0^T\int_{B_R} \mu\, \psi\,\mathrm{d} x\,\mathrm{d} t
        =\lim_{\sigma\rightarrow 0} \int_0^T\int_{B_R} ((\rho^\sigma e^{V-\frac{\mathbb{V}}{2}}\chi_R)\ast\omega_\sigma)\; \psi\,\mathrm{d} x\,\mathrm{d} t\\
        &=\lim_{\sigma\rightarrow 0} \int_0^T\int_{B_{2R}} \rho^\sigma \; (\psi \ast\omega_\sigma)e^{V-\frac{\mathbb{V}}{2}}\chi_R\,\mathrm{d} x\,\mathrm{d} t
        =\int_0^T\int_{B_R} \rho \, e^{V-\frac{\mathbb{V}}{2}}\chi_R \; \psi\,\mathrm{d} x\,\mathrm{d} t.
    \end{align*}
    This proves that $\mu=\rho \, e^{V-\frac{\mathbb{V}}{2}}\chi_R$. Since, every subsequence we could have chosen yields the same limit, the whole sequence must strongly converge and the proof is concluded.
\end{proof}

\begin{lem}[Weak convergence]\label{lem:weak_convergences_slsi}
    Let Assumption \ref{assumptions:slsi} hold and let $\{\rho^\sigma\}_\sigma$ be the weak solutions to \eqref{eq:mf_slsi_svgd} and let $\rho$ be the corresponding curve of probability measures in Lemma \ref{lem:basic_conv}. Then, for all fixed $T,R>0$ we have the weak convergences:
    \begin{enumerate}[label=(O\arabic*)]
        \item\label{conv:weak_convergences_slsi_0} $\rho^\sigma e^{V-\frac{\mathbb{V}}{2}}\rightharpoonup \rho\, e^{V-\frac{\mathbb{V}}{2}}$ as $\sigma\rightarrow 0$ in $L^2([0,T]\times \mathbb{R}^{d})$,
        \item\label{conv:weak_convergences_slsi} 	$(\nabla(\rho^\sigma e^V)e^{-\frac{\mathbb{V}}{2}}\;\chi_R)\ast\omega_\sigma\rightharpoonup \nabla(\rho\, e^V)e^{-\frac{\mathbb{V}}{2}}\;\chi_R$ as $\sigma\rightarrow 0$ in $L^2((0,T)\times B_R)$,
        \item\label{conv:weak_convergences_slsi_1} 	$(\nabla(\rho^\sigma e^V)e^{-\frac{\mathbb{V}}{2}})\ast\omega_\sigma\rightharpoonup \nabla(\rho \, e^V)e^{-\frac{\mathbb{V}}{2}}$ as $\sigma\rightarrow 0$ in $L^2((0,T)\times \mathbb{R}^{d})$.
    \end{enumerate}
\end{lem}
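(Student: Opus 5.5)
The plan is to combine uniform $L^2$ bounds, which give weak precompactness, with an identification of the limit by testing against smooth compactly supported functions. Throughout I write $w:=e^{V-\frac{\mathbb{V}}{2}}$ and $f^\sigma:=\nabla(\rho^\sigma e^V)e^{-\frac{\mathbb{V}}{2}}=(\nabla\rho^\sigma+\rho^\sigma\nabla V)w$.

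\emph{Step 1: proof of \ref{conv:weak_convergences_slsi_0}.} By \ref{bnd:rho} of Lemma \ref{lem:estimates_slsi}, $\{\rho^\sigma w\}_\sigma$ is bounded in $L^2([0,T]\times\mathbb{R}^{d})$, so a subsequence converges weakly to some $g$. To identify $g$, I test with $\psi\in C_c^\infty([0,T]\times\mathbb{R}^{d})$. Since $w\psi$ is bounded and compactly supported, \ref{rem:l1_weak_conv} of Lemma \ref{lem:basic_conv} gives $\int\rho^\sigma w\psi\to\int\rho w\psi$. Hence $g=\rho w$, and by uniqueness of the limit the whole sequence converges.

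\emph{Step 2: proof of \ref{conv:weak_convergences_slsi_1}.} By \ref{est_gf_dis} of Lemma \ref{lem:gf_est}, $\{f^\sigma\ast\omega_\sigma\}_\sigma$ is bounded in $L^2((0,T)\times\mathbb{R}^{d})$, so a subsequence converges weakly to some $G$. I test against $\Psi\in C_c^\infty((0,T)\times\mathbb{R}^{d};\mathbb{R}^d)$. Using radial symmetry of $\omega$ and integration by parts,
\[
\int f^\sigma\ast\omega_\sigma\cdot\Psi=-\int \rho^\sigma e^V\,\mathrm{div}\big((\Psi\ast\omega_\sigma)e^{-\frac{\mathbb{V}}{2}}\big).
\]
The difficulty here is that $\Psi\ast\omega_\sigma$ is not compactly supported, since $\omega$ may have global support, and $e^V$ may grow. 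So I write the right-hand side as $-\int\rho^\sigma w\,[\mathrm{div}(\Psi\ast\omega_\sigma)-(\Psi\ast\omega_\sigma)\cdot\nabla\mathbb{V}/2]$. I expect this to be the main obstacle, and I would handle it in Fourier variables rather than pointwise. The distribution $\rho^\sigma w$ is bounded in $L^2_{t,x}$, and $\Psi\ast\omega_\sigma\to\Psi$ in $H^1$ because $\hat\omega_\sigma\to1$ pointwise with $|\hat\omega_\sigma|\le\mathcal{D}_1$. The term $\mathrm{div}(\Psi\ast\omega_\sigma)$ therefore converges strongly in $L^2$, and pairing a strong with a weak limit gives $\int\rho w\,\mathrm{div}\Psi$.

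For the term carrying the factor $\nabla\mathbb{V}$, which grows linearly, I would avoid tails by splitting. Write $\Psi\ast\omega_\sigma=\chi_{R'}(\Psi\ast\omega_\sigma)+(1-\chi_{R'})(\Psi\ast\omega_\sigma)$. The first piece converges strongly in weighted $L^2$. The second piece is small in $L^2(|\nabla\mathbb{V}|^2\,\mathrm{d}x)$: it is bounded by $\||x|\,(\Psi\ast\omega_\sigma)\|_{L^2(|x|>R')}$, and $|x|\le|x-y|+|y|$ with $|x|\omega\in L^{p_0}$ gives control via Young's inequality. As a fallback, I would instead test with $\Psi=e^{\frac{\mathbb{V}}{2}}\tilde\Psi$, which is dense enough for identifying the limit because all such functions are compactly supported. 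This also yields the limit $-\int\rho e^V\mathrm{div}(\Psi e^{-\frac{\mathbb{V}}{2}})$. Either way, $G=\nabla(\rho e^V)e^{-\frac{\mathbb{V}}{2}}$ in the sense of distributions, and $G\in L^2$.

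\emph{Step 3: proof of \ref{conv:weak_convergences_slsi}.} I use the decomposition $(f^\sigma\chi_R)\ast\omega_\sigma=\chi_R(f^\sigma\ast\omega_\sigma)+\big[(f^\sigma\chi_R)\ast\omega_\sigma-\chi_R(f^\sigma\ast\omega_\sigma)\big]$. The bracket tends to $0$ strongly in $L^2_{t,x}$ by \eqref{eq:h-1_comm} of Lemma \ref{lem:h-1_mod_bound}, applied to $f^\sigma/C$ with $C$ the uniform bound from \ref{est_gf_dis}; note that $f^\sigma\in L^2_tH^{-1}_x$ by \ref{bnd:h-1}. Meanwhile $\chi_R(f^\sigma\ast\omega_\sigma)\rightharpoonup\chi_R\nabla(\rho e^V)e^{-\frac{\mathbb{V}}{2}}$ by Step 2, since multiplication by $\chi_R$ is weakly continuous. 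Restricting to $B_R$ gives \ref{conv:weak_convergences_slsi}.
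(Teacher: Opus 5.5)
Your proof is correct, but it reverses the paper's order of argument.

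\textbf{What the paper does.} Your proof of \ref{conv:weak_convergences_slsi_0} coincides with the paper's. For the other two items, the paper proves \ref{conv:weak_convergences_slsi} first. It uses the identity $(f^\sigma\chi_R)\ast\omega_\sigma=\nabla(\rho^\sigma w\chi_R)\ast\omega_\sigma-(\rho^\sigma w\,\nabla(\chi_R e^{-\frac{\mathbb{V}}{2}})e^{\frac{\mathbb{V}}{2}})\ast\omega_\sigma$. The first term is handled by the local $H^1$ bound \ref{bnd:h1} and the convergence of Lemma \ref{conv:rho_slsi}. The second term involves only compactly supported functions, so \ref{bnd:rho} suffices. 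Only afterwards does it obtain \ref{conv:weak_convergences_slsi_1}: it tests with $\psi$ supported in $B_R$ and uses Lemma \ref{lem:h-1_mod_bound} with \ref{bnd:h-1} to replace $f^\sigma\ast\omega_\sigma$ by $(f^\sigma\chi_R)\ast\omega_\sigma$.

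\textbf{What you do instead.} You identify the limit in \ref{conv:weak_convergences_slsi_1} directly by duality, then localize with the same commutator lemma. The price is that you must control the non-compactly supported $\Psi\ast\omega_\sigma$ against the linearly growing $\nabla\mathbb{V}$. This works, and your cutoff splitting is not even needed. Use the identity $x\,(\Psi\ast\omega_\sigma)=(x\Psi)\ast\omega_\sigma+\Psi\ast(y\,\omega_\sigma)$ together with Young's inequality $\|\Psi\ast(y\,\omega_\sigma)\|_{L^2}\le\|\Psi\|_{L^s}\|y\,\omega_\sigma\|_{L^{p_0}}$, where $\frac1s=\frac32-\frac1{p_0}$. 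The scaling fact from Lemma \ref{lem:comm_est_slsi} gives $\|y\,\omega_\sigma\|_{L^{p_0}}=\sigma^{1-d+d/p_0}\|y\,\omega\|_{L^{p_0}}\to0$. Hence $(1+|x|)(\Psi\ast\omega_\sigma-\Psi)\to0$ in $L^2$, and pairing this against the weak convergence \ref{conv:weak_convergences_slsi_0} finishes the identification.

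\textbf{What each route buys.} Your proof of this lemma needs only \ref{bnd:rho}, \ref{bnd:h-1}, \ref{est_gf_dis}, \ref{rem:l1_weak_conv} and the moment condition $|x|\,\omega\in L^{p_0}$. It does not depend on \ref{bnd:h1}, Corollary \ref{bnd:dt_rho_chi_slsi} or the Aubin--Lions argument. The paper's ordering keeps every convolved object compactly supported, so no tail estimates arise.

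\textbf{Two small points.} First, your integration-by-parts formula should be read as the pairing $\langle f^\sigma,\Psi\ast\omega_\sigma\rangle_{H^{-1},H^1}$. This is legitimate by \ref{bnd:h-1} and because $\hat\omega$ is real. Since $\Psi\ast\omega_\sigma$ is not compactly supported, the pairing must then be extended from $C_c^\infty$ test functions by a cutoff argument, which is valid because $(1+|x|)(\Psi\ast\omega_\sigma)\in L^2$ and $\nabla(\Psi\ast\omega_\sigma)\in L^2$. Second, drop your fallback with $\Psi=e^{\frac{\mathbb{V}}{2}}\tilde\Psi$. That set is just $C_c^\infty$ again, and $\Psi\ast\omega_\sigma$ is still not compactly supported, so the fallback does not sidestep the tail issue.
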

\begin{proof}
    The proof of every convergence follows the same idea. We take a weakly converging subsequence with the desired limit. Since every subsequence that we could have chosen will yields the same limit, the whole sequence must weakly converge.
    
    \underline{The weak convergence \ref{conv:weak_convergences_slsi_0}.}
    By the Banach-Alaoglu Theorem and \ref{bnd:rho} of Lemma \ref{lem:estimates_slsi}, we may assume that $\rho^\sigma e^{V-\frac{\mathbb{V}}{2}}\rightharpoonup \mu$ as $\sigma\rightarrow 0$ weakly in $L^2([0,T]\times \mathbb{R}^{d})$. Now take any test function $\psi\in C_c^\infty([0,T]\times B_R)$ and compute
    \begin{align*}
        \int_0^T \int_{\mathbb{R}^{d}} \mu\,\psi\,\mathrm{d} x\,\mathrm{d} t
        =\lim_{\sigma\rightarrow 0}\int_0^T \int_{B_R} \rho^\sigma e^{V-\frac{\mathbb{V}}{2}}\psi\,\mathrm{d} x\,\mathrm{d} t
        =\int_0^T \int_{\mathbb{R}^{d}} \rho e^{V-\frac{\mathbb{V}}{2}}\psi\,\mathrm{d} x\,\mathrm{d} t,
    \end{align*}
    where we have used the fact that $e^{V-\frac{\mathbb{V}}{2}}\psi\in L^\infty([0,T]\times \mathbb{R}^{d})$ and is supported on $[0,T]\times B_R$, and that $\rho^\sigma \rightharpoonup \rho$ as $\sigma\rightarrow 0$ in $L^1([0,T]\times B_R)$ by \ref{rem:l1_weak_conv} of Lemma \ref{lem:basic_conv}. Then, as $R$ and $\psi$ is arbitrary, it follows that $\mu=\rho\, e^{V-\frac{\mathbb{V}}{2}}$.
    
    \underline{The weak convergence \ref{conv:weak_convergences_slsi}.} We observe the identity
    \begin{align*}
        (\nabla(\rho^\sigma e^V)e^{-\frac{\mathbb{V}}{2}}\;\chi_R)\ast\omega_\sigma=\nabla (\rho^\sigma e^{V-\frac{\mathbb{V}}{2}}\chi_R)\ast\omega_\sigma-(\rho^\sigma e^{V-\frac{\mathbb{V}}{2}} \nabla (\chi_R e^{-\frac{\mathbb{V}}{2}})\,e^{\frac{\mathbb{V}}{2}})\ast\omega_\sigma.
    \end{align*}
    We use Lemma \ref{conv:rho_slsi}, \ref{bnd:h1} of Lemma \ref{lem:estimates_slsi} and the Banach--Alaoglu to show that we have $\nabla (\rho^\sigma e^{V-\frac{\mathbb{V}}{2}}\chi_R)\ast\omega_\sigma\rightharpoonup \nabla (\rho \,e^{V-\frac{\mathbb{V}}{2}}\chi_R)$ as $\sigma\rightarrow 0$ in $L^2([0,T]\times B_R;\,\mathbb{R}^{d})$, up to passing to some subsequence (not relabeled). We again use the Banach--Alaoglu Theorem, \ref{bnd:rho} of Lemma~\ref{lem:estimates_slsi} and the fact that $\nabla (\chi_R e^{-\frac{\mathbb{V}}{2}})\, e^{\frac{\mathbb{V}}{2}}\in C_c^\infty(\mathbb{R}^{d};\,\mathbb{R}^{d})$ to show there exists a weakly convergent subsequence (not relabeled) such that $(\rho^\sigma e^{V-\frac{\mathbb{V}}{2}} \nabla (\chi_R e^{-\frac{\mathbb{V}}{2}})\,e^{\frac{\mathbb{V}}{2}})\ast\omega_\sigma\rightharpoonup \rho\, e^{V-\frac{\mathbb{V}}{2}} \nabla (\chi_R e^{-\frac{\mathbb{V}}{2}})\,e^{\frac{\mathbb{V}}{2}}$ as $\sigma\rightarrow 0$ in $L^2([0,T]\times \mathbb{R}^{d};\,\mathbb{R}^{d})$ (use the same identification of the limit argument as in Lemma \ref{conv:rho_slsi}). The claim now follows from the fact that
    \begin{align*}
        \nabla (\rho\, e^V)\,e^{-\frac{\mathbb{V}}{2}}\chi_R
        =\nabla (\rho \,e^{V-\frac{\mathbb{V}}{2}}\chi_R)-\rho\, e^{V-\frac{\mathbb{V}}{2}}\nabla(\chi_R\, e^{-\frac{\mathbb{V}}{2}})\,e^{\frac{\mathbb{V}}{2}}.
    \end{align*}
    
    \underline{The weak convergence \ref{conv:weak_convergences_slsi_1}.} From \ref{est_gf_dis} of Lemma \ref{lem:gf_est} and the Banach-Alaoglu theorem, we may assume that we have $(\nabla (\rho^\sigma e^V)\, e^{-\frac{\mathbb{V}}{2}})\ast\omega_\sigma\rightharpoonup \mu$ as $\sigma\rightarrow 0$ weakly in $L^2([0,T]\times \mathbb{R}^{d};\,\mathbb{R}^{d})$. Take any test function $\psi\in C_c^\infty([0,T]\times B_R;\,\mathbb{R}^{d})$ and compute
    \begin{align*}
        &\int_0^T \int_{\mathbb{R}^{d}} \mu\cdot\psi\,\mathrm{d} x\,\mathrm{d} t
        =\lim_{\sigma\rightarrow 0}\int_0^T \int_{\mathbb{R}^{d}} (\nabla (\rho^\sigma e^V)\, e^{-\frac{\mathbb{V}}{2}})\ast\omega_\sigma\cdot\psi\,\mathrm{d} x\,\mathrm{d} t\\
        &=\lim_{\sigma\rightarrow 0}\int_0^T \int_{\mathbb{R}^{d}} (\nabla (\rho^\sigma e^V)\, e^{-\frac{\mathbb{V}}{2}}\chi_R)\ast\omega_\sigma\cdot\psi\,\mathrm{d} x\,\mathrm{d} t
        =\int_0^T \int_{\mathbb{R}^{d}} \nabla (\rho^\sigma e^V)\, e^{-\frac{\mathbb{V}}{2}}\cdot\psi\,\mathrm{d} x\,\mathrm{d} t,
    \end{align*}
    where we have used the commutator argument from Lemma \ref{lem:h-1_mod_bound} in conjunction with \ref{bnd:h-1} from Lemma \ref{lem:estimates_slsi}, the fact that $\psi=\psi\, \chi_R$ and the weak convergence \ref{conv:weak_convergences_slsi}. By the fact that $\psi$ and $R$ are arbitrary, we deduce that $\mu=\nabla (\rho^\sigma e^V)\, e^{-\frac{\mathbb{V}}{2}}$.
\end{proof}

\begin{lem}[Weak convergence]\label{lem:weak_convergences}
    Let Assumption \ref{assumptions:pot} hold and let $\{\varrho^\sigma\}_\sigma$ be the weak solutions to \eqref{eq:mf_svgd} and let $\varrho$ be the corresponding curve of probability measures in Lemma~\ref{lem:basic_conv}. Then, we have the weak convergence $(\nabla\varrho^\sigma+\varrho^\sigma\nabla V)\ast\omega_\sigma\rightharpoonup \nabla\varrho+\varrho\nabla V$ as $\sigma\rightarrow 0$ in $L^2((0,T)\times \mathbb{R}^{d})$.
\end{lem}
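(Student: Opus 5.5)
We follow the scheme of Lemma \ref{lem:weak_convergences_slsi}: extract a weakly convergent subsequence and identify its limit by testing. The limit is the same for every subsequence, so the whole family converges. By \ref{est_gf_dis} of Lemma \ref{lem:gf_est}, the family $\{(\nabla\varrho^\sigma+\varrho^\sigma\nabla V)\ast\omega_\sigma\}_\sigma$ is bounded in $L^2((0,T)\times\mathbb{R}^{d})$. Banach--Alaoglu then gives a subsequence converging weakly to some $\mu\in L^2((0,T)\times\mathbb{R}^{d};\mathbb{R}^{d})$. It remains to show $\mu=\nabla\varrho+\varrho\nabla V$ in the sense of distributions. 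Since $\mu\in L^2$, this will also prove the regularity \eqref{eq:regularity_weak_sol_2}.

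Fix $\psi\in C_c^\infty((0,T)\times B_R;\mathbb{R}^{d})$. Because $\omega$ is radially symmetric, we can move the convolution onto the test function:
\begin{align*}
\int_0^T\!\!\int_{\mathbb{R}^{d}}(\nabla\varrho^\sigma+\varrho^\sigma\nabla V)\ast\omega_\sigma\cdot\psi
=-\int_0^T\!\!\int_{\mathbb{R}^{d}}\varrho^\sigma\,\mathrm{div}(\psi\ast\omega_\sigma)+\int_0^T\!\!\int_{\mathbb{R}^{d}}\varrho^\sigma\,\nabla V\cdot(\psi\ast\omega_\sigma).
\end{align*}
For the first term, note that $\mathrm{div}(\psi\ast\omega_\sigma)=(\mathrm{div}\psi)\ast\omega_\sigma\to\mathrm{div}\psi$ uniformly. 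The $\varrho^\sigma$ are uniformly bounded in $L^\infty_tL^1_x$ and converge weakly in $L^1$ on bounded sets by \ref{rem:l1_weak_conv}. Together these give convergence to $-\int\varrho\,\mathrm{div}\psi$.

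\textbf{Main obstacle: the second term.} Two difficulties arise. First, $\psi\ast\omega_\sigma$ is not compactly supported, since $\omega$ may have unbounded support. Second, $\nabla V$ is unbounded. We split $\psi\ast\omega_\sigma=(\psi\ast\omega_\sigma)\chi_{R'}+(\psi\ast\omega_\sigma)(1-\chi_{R'})$ with $R'\ge 2R$.

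On the bounded piece, $\nabla V$ is continuous and bounded on $B_{2R'}$, so $\nabla V\cdot(\psi\ast\omega_\sigma)\chi_{R'}\to\nabla V\cdot\psi$ uniformly. Weak $L^1$ convergence on $B_{2R'}$ then gives the limit $\int\varrho\nabla V\cdot\psi$.

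For the tail piece, let $|x|\ge R'\ge 2R$. Every $y\in B_R$ then satisfies $|x-y|\ge|x|/2$, which gives the pointwise bound
\begin{align*}
|\psi\ast\omega_\sigma|(x)\le\|\psi\|_{L^\infty}\int_{|z|\ge |x|/2}\omega_\sigma(z)\,\mathrm{d} z\le \|\psi\|_{L^\infty}\,\frac{2^{p_0^\ast}\sigma^{p_0^\ast}}{|x|^{p_0^\ast}}\,\||z|^{p_0^\ast}\omega\|_{L^1},
\end{align*}
using the moment bound \eqref{eq:pot_omega}. By \eqref{eq:bound_nabla_V_by_V}, we have $|\nabla V|\le C(1+V)$. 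Hence the tail contribution is at most $C\sigma^{p_0^\ast}\sup_t\int\varrho^\sigma_t(1+V)$. This tends to zero by \ref{est_gf_kl}, even without using the decay in $|x|$. If this crude bound turns out to need $\omega\ge0$, that is available under Assumption \ref{assumptions:pot}.

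Combining the two pieces, we obtain
\[
\int\mu\cdot\psi=-\int\varrho\,\mathrm{div}\psi+\int\varrho\nabla V\cdot\psi .
\]
Because $\psi$ and $R$ are arbitrary, this identifies $\mu$ as the distribution $\nabla\varrho+\varrho\nabla V$. Finally, uniqueness of the limit over subsequences gives weak convergence of the whole family.
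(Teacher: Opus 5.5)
Your proof is correct. It follows the same overall scheme as the paper (extract a subsequence, move the convolution onto the test function, identify the limit), but the decomposition and the key lemmas differ.

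The paper first invokes Lemma~\ref{prop:bounds_gradient} to split the dissipation term. This gives separate local $L^2$ bounds on $\nabla\varrho^\sigma\ast\omega_\sigma$ and on $(\varrho^\sigma\nabla V)\ast\omega_\sigma$. The paper then extracts a weak limit of each on $(0,T)\times B_R$. It identifies the first limit through a commutator with $\chi_R$. It identifies the second through the auxiliary Lemma~\ref{lem:weak_conv_of_rho_nablaV}, which gives weak $L^1((0,T)\times\mathbb{R}^d)$ convergence of $\varrho^\sigma\nabla V$ and relies on $|\nabla V|/V\to 0$ at infinity. Finally it adds the two limits.

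You instead take a single weak limit of the sum, using only the global bound \ref{est_gf_dis}, and identify it distributionally. For the $\nabla V$ term you replace Lemma~\ref{lem:weak_conv_of_rho_nablaV} by a cutoff at radius $R'\ge 2R$ combined with the smallness of $\psi\ast\omega_\sigma$ away from $\mathrm{supp}\,\psi$. As a result you need only the linear bound $|\nabla V|\le C(1+V)$ and the $L^\infty_tL^1_x$ bound on $\varrho^\sigma(1+V)$. The moment from \eqref{eq:pot_omega} is not even needed: plain integrability of $\omega$ already gives $\int_{|z|\ge R'/2}|\omega_\sigma|\,\mathrm{d} z\to 0$.

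Your route is shorter and does not depend on the delicate local $H^1$ estimate of Lemma~\ref{prop:bounds_gradient}. It does, however, identify only the combination $\nabla\varrho+\varrho\nabla V$. The paper's splitting also yields $\nabla\varrho^\sigma\ast\omega_\sigma\rightharpoonup\nabla\varrho$ and $(\varrho^\sigma\nabla V)\ast\omega_\sigma\rightharpoonup\varrho\nabla V$ separately in $L^2_{\mathrm{loc}}$, so $\nabla\varrho\in L^2_{\mathrm{loc}}$ on its own. That matches the regularity \eqref{eq:regularity_weak_sol_3}, although that property is in any case obtained in Step~3 from Lemmas~\ref{conv:rho_slsi} and~\ref{prop:bounds_gradient}.
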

\begin{proof}
    By \ref{est_gf_dis} of Lemma \ref{lem:gf_est} we know that $\{(\nabla\varrho^\sigma+\varrho^\sigma\nabla V)\ast\omega_\sigma\}_\sigma$ is uniformly bounded in $L^2((0,T)\times B_R)$. Consequently, from Lemma \ref{prop:bounds_gradient} we know that both $\{\nabla\varrho^\sigma\ast\omega_\sigma\}_\sigma$ and $\{\varrho^\sigma\nabla V\ast\omega_\sigma\}_\sigma$ are uniformly bounded in $L^2((0,T)\times B_R)$.
    
    \smallskip
    
    By Banach--Alaoglu Theorem, we can extract a subsequence (not relabeled) and two functions $\mu,\nu\in L^2([0,1]\times B_R)$ such that $\nabla\varrho^\sigma\ast\omega_\sigma \rightharpoonup \mu$ and $\varrho^\sigma\nabla V\ast\omega_\sigma \rightharpoonup \nu$ as $\sigma\rightarrow 0$ in $L^2([0,T]\times B_R)$. Let $\psi\in C_c^\infty([0,T]\times B_R)$ be a test function. We compute for $i\in \{1,\ldots,d\}$
    \begin{align*}
        \int_0^T \int_{B_R}
        (\partial_i \varrho^\sigma\ast\omega_\sigma)\;\psi\,\mathrm{d} x\,\mathrm{d} t
        =-\int_0^T \int_{B_R}
        \varrho^\sigma \; (\partial_i \psi\ast\omega_\sigma)\,\mathrm{d} x\,\mathrm{d} t.
    \end{align*}
    To pass to the limit, we must apply a commutator argument. We estimate the difference by using the fact that $\partial_i\psi=\partial_i\psi\;\chi_R$, the Young convolution inequality and the change of variables formula to estimate
    \begin{align*}
        &\|((\partial_i \psi\;\chi_R)\ast\omega_\sigma)-\chi_R\, (\partial_i \psi\ast\omega_\sigma)\|_{L_{t,x}^\infty}
        \leq \|\chi_R\|_{W^{1,\infty}}\||\partial_i \psi|\ast |y\,\omega_\sigma|\|_{L_{t,x}^\infty}\\
        &\leq \|\chi_R\|_{W^{1,\infty}}\|\partial_i \psi\|_{L_{t,x}^\infty}\|y\,\omega_\sigma\|_{L^1}
        \leq C\,\sigma,
    \end{align*}
    where $C>0$ is a constant independent of $\sigma$ and $i$. Then, as $\varrho^\sigma\rightharpoonup \varrho$ in $L^1([0,T]\times B_{2R})$ by \ref{rem:l1_weak_conv} of Lemma \ref{lem:basic_conv}, and $(\partial_i \psi\ast\omega_\sigma)\rightarrow \partial_i \psi$ in $L^\infty([0,T]\times B_{2R})$, we can derive the limit
    \begin{align*}
        &\int_0^T \int_{B_R}
        \mu\;\psi\,\mathrm{d} x\,\mathrm{d} t
        =\lim_{\sigma\rightarrow 0}\int_0^T \int_{B_R}
        (\partial_i \varrho^\sigma\ast\omega_\sigma)\;\psi\,\mathrm{d} x\,\mathrm{d} t\\
        &=\lim_{\sigma\rightarrow 0}-\int_0^T \int_{\mathbb{R}^{d}}
        \varrho^\sigma \; ((\partial_i \psi\;\chi_R)\ast\omega_\sigma)\,\mathrm{d} x\,\mathrm{d} t
        =\lim_{\sigma\rightarrow 0}-\int_0^T \int_{B_{2R}}
        \varrho^\sigma \; \chi_R(\partial_i \psi\ast\omega_\sigma)\,\mathrm{d} x\,\mathrm{d} t\\
        &=\int_0^T \int_{B_R}
        \partial_i \varrho\;\psi\,\mathrm{d} x\,\mathrm{d} t.
    \end{align*}
    As $\psi$ is arbitrary, we have shown that $\mu=\nabla \varrho$.
    
    \smallskip
    
    The same argument applies to $\{(\varrho^\sigma
    \nabla V)\ast\omega_\sigma\}_\sigma$. Taking any $\psi\in C_c^\infty([0,T]\times B_R)$ and repeating the previous computations, we have
    \begin{align*}
        \int_0^T\int_{B_R} ((\varrho^\sigma
        \nabla V)\ast\omega_\sigma)\,\psi\,\mathrm{d} x\,\mathrm{d} t
        = \int_0^T\int_{\mathbb{R}^{d}} \varrho^\sigma
        \nabla V\, (\psi\ast\omega_\sigma)\,\mathrm{d} x\,\mathrm{d} t
        \rightarrow \int_0^T\int_{B_R} \varrho\nabla V\,\psi \,\mathrm{d} x\,\mathrm{d} t,
    \end{align*}
    because $\varrho^\sigma
    \nabla V \rightharpoonup \varrho
    \nabla V$ as $\sigma\rightarrow 0$ in $L^1([0,T]\times \mathbb{R}^{d})$ by Lemma \ref{lem:weak_conv_of_rho_nablaV}. Therefore $\nu=\varrho\nabla V$. Adding the two weak limits, we conclude the desired weak limit. Since, every subsequence we could have chosen yields the same limit, the whole sequence must weakly converge. This completes the proof.
\end{proof}


\section{Proof of Theorem \ref{thm:ex_weak_sol_loc_pde} and Theorem \ref{thm:ex_weak_sol_nonloc_pde}}

\label{sec:main_proof}

We only show the proof for Theorem \ref{thm:ex_weak_sol_loc_pde}. The proof for Theorem \ref{thm:ex_weak_sol_nonloc_pde} is exactly analogue, up to small differences that we point out. Let us give a proof strategy. We divide the proof into 5 steps. In Step 1, we prove the convergence of solutions to \eqref{eq:mf_slsi_svgd} towards a solution to the equation \eqref{eq:lim_mf_slsi_svgd} (respectively solutions of \eqref{eq:mf_svgd} towards a solution to the equation \eqref{eq:lim_mf_svgd}). Then in Step 2, we prove the energy dissipation inequalities \eqref{eq:edi_slsi} (respectively \eqref{eq:lim_edi}). In Step 3, we prove the regularity claims \eqref{eq:regularity_weak_sol_slsi}--\eqref{eq:regularity_weak_sol_slsi_3} (respectively \eqref{eq:regularity_weak_sol}--\eqref{eq:regularity_weak_sol_3}). Finally, in Steps 4 and 5 we focus on the weak solution of \eqref{eq:lim_mf_slsi_svgd}, where we are able to prove the exponential decay in the Kullback-Leibler divergence in \eqref{eq:edi_slsi}, as well as the absolute continuity in the Wasserstein metric.

\underline{Step 1: Existence of solutions.}
In this step, we will show that the limit $\rho$ of Lemma \ref{lem:basic_conv} satisfies for all $T,R>0$ and any test functions $\psi\in C_c^\infty([0,T]\times B_R)$ the equation
\begin{align}
    \label{eq:weak_eq_to_pass_lim_target}
    \begin{split}
        &\int_{\mathbb{R}^{d}} \psi_T \rho_T\,\mathrm{d} x-\int_{\mathbb{R}^{d}} \psi_0 \rho_0\,\mathrm{d} x\\
        &=
        \int_0^T \int_{\mathbb{R}^{d}} \partial_t\psi\; \rho_t\,\mathrm{d} x\,\mathrm{d} t
        -\int_0^T\int_{\mathbb{R}^{d}} \rho e^{V-\frac{\mathbb{V}}{2}}\nabla \psi\cdot \nabla (\rho e^V) e^{-\frac{\mathbb{V}}{2}}\,\mathrm{d} x\,\mathrm{d} t.
    \end{split}
\end{align}
Indeed, the weak formulation of \eqref{eq:mf_slsi_svgd} gives us the equation
\begin{align}
    \label{eq:weak_eq_to_pass_lim}
    \begin{split}
        &\int_{\mathbb{R}^{d}} \psi_T \rho_T^\sigma\,\mathrm{d} x-\int_{\mathbb{R}^{d}} \psi_0 \rho_0\,\mathrm{d} x\\
        &=
        \int_0^T \int_{\mathbb{R}^{d}} \partial_t\psi\; \rho_t^\sigma\,\mathrm{d} x\,\mathrm{d} t
        -\int_0^T\int_{\mathbb{R}^{d}} (\rho^\sigma e^{V-\frac{\mathbb{V}}{2}}\nabla \psi)\ast \omega_\sigma\cdot (\nabla (\rho^\sigma e^V) e^{-\frac{\mathbb{V}}{2}})\ast \omega_\sigma\,\mathrm{d} x \,\mathrm{d} t.
    \end{split}
\end{align}
Then, the first term in \eqref{eq:weak_eq_to_pass_lim} converges by \ref{bnd:conv_l1} from Lemma \ref{lem:basic_conv}, and the third term in \eqref{eq:weak_eq_to_pass_lim} converges by \ref{rem:l1_weak_conv} of Lemma \ref{lem:basic_conv}. This means we only need to justify the limit for the last term in \eqref{eq:weak_eq_to_pass_lim}. Because $\{((\nabla\rho^\sigma+\rho^\sigma\nabla V)e^{V-\frac{\mathbb{V}}{2}})\ast\omega_\sigma\}_\sigma$ is uniformly bounded in $L^2([0,T]\times \mathbb{R}^{d})$ by \ref{est_gf_dis} from Lemma \ref{lem:gf_est}, we can employ the commutator argument \eqref{eq:conv_com_slsi} from Lemma \ref{lem:comm_est_slsi} (respectively \eqref{eq:commutator_representation} from Lemma \ref{lem:comm_est}). Thus, we get the identity
\begin{align*}
    &\lim_{\sigma\to 0}\int_0^T\int_{\mathbb{R}^{d}} (\rho^\sigma e^{V-\frac{\mathbb{V}}{2}}\nabla \psi)\ast \omega_\sigma\cdot (\nabla (\rho^\sigma e^V) e^{-\frac{\mathbb{V}}{2}})\ast \omega_\sigma\,\mathrm{d} x \,\mathrm{d} t\\
    &=\lim_{\sigma\rightarrow 0}\int_0^T\int_{B_R} \nabla \psi\;((\rho^\sigma e^{V-\frac{\mathbb{V}}{2}}\chi_R)\ast \omega_\sigma)\cdot (\nabla (\rho^\sigma e^V)e ^{-\frac{\mathbb{V}}{2}})\ast \omega_\sigma \,\mathrm{d} x\,\mathrm{d} t.
\end{align*}
Next, we observe that $(\rho^\sigma e^{V-\frac{\mathbb{V}}{2}}\chi_R)\ast\omega_\sigma \rightarrow \rho\, e^{V-\frac{\mathbb{V}}{2}}\chi_R$ as $\sigma\rightarrow 0$ in $L^2([0,T]\times B_R)$ by Lemma~\ref{conv:rho_slsi}, and that $(\nabla (\rho^\sigma e^V)e^{\frac{-\mathbb{V}}{2}})\ast\omega_\sigma\rightharpoonup \nabla (\rho\, e^V)e^{\frac{-\mathbb{V}}{2}}$ as $\sigma\rightarrow 0$ in $L^2([0,T]\times B_R)$ by \ref{conv:weak_convergences_slsi_1} of Lemma \ref{lem:weak_convergences_slsi} (respectively Lemma \ref{lem:weak_convergences}). Therefore, these convergences justify the limit for the last term in \eqref{eq:weak_eq_to_pass_lim}. Note that for any test function $\psi$, we can always choose $R>0$ sufficiently large such that $\psi=\psi\chi_R$. With this we have proven the identity \eqref{eq:weak_eq_to_pass_lim_target} as desired.

\smallskip

\underline{Step 2: Energy dissipation inequality.} Using the energy dissipation inequality \eqref{eq:entropy_identity} from Lemma \ref{lem:gf_est}, we derive the following inequality, for all $\sigma\in (0,1)$,
\begin{align*}
    \mathrm{KL}(\rho_T^\sigma\,||\,\rho_\infty)+\big\|\big((\nabla\rho^\sigma+\rho^\sigma\nabla V)e^{V-\frac{\mathbb{V}}{2}}\big)\ast\omega_\sigma\big\|_{L_{t,x}^2}^2\leq \mathrm{KL}(\rho_0\,||\,\rho_\infty).
\end{align*}
We show that when passing to the limit $\sigma\rightarrow 0$ the following inequality holds
\begin{equation}\label{eq:energy_dissipation_inequality_limit_scl_to_0}
    \mathrm{KL}(\rho_T\,||\,\rho_\infty)+\|(\nabla\rho+\rho\nabla V)e^{V-\frac{\mathbb{V}}{2}}\|_{L_{t,x}^2}^2\leq \mathrm{KL}(\rho_0\,||\,\rho_\infty).
\end{equation}
Indeed, for the Kullback-Leibler term we use \cite[Theorem 2.34]{ambrosio_fusco_pallara_2000_functions} and \ref{bnd:conv_l1} of Lemma \ref{lem:basic_conv} to obtain
\begin{align*}
    \mathrm{KL}(\rho_T || \rho_\infty)
    \leq \liminf_{\sigma\rightarrow 0}\mathrm{KL}(\rho_T^\sigma || \rho_\infty).
\end{align*}
For the second term, we have $((\nabla\rho^\sigma+\rho^\sigma\nabla V)e^{V-\frac{\mathbb{V}}{2}})\ast\omega_\sigma\rightharpoonup (\nabla\rho+\rho\nabla V)\, e^{V-\frac{\mathbb{V}}{2}}$ as $\sigma\rightarrow 0$ in $L^2([0,T]\times \mathbb{R}^{d})$ by \ref{conv:weak_convergences_slsi_1} in Lemma \ref{lem:weak_convergences_slsi} (respectively Lemma \ref{lem:weak_convergences}) so the weak lower semincontinuity of a norm implies
\begin{align*}
    \|(\nabla\rho+\rho\nabla V)e^{V-\frac{\mathbb{V}}{2}}\|_{L_{t,x}^2}^2\leq \liminf_{\sigma\rightarrow 0}\|((\nabla\rho^\sigma+\rho^\sigma\nabla V)e^{V-\frac{\mathbb{V}}{2}})\ast\omega_\sigma\|_{L_{t,x}^2}^2.
\end{align*}

\smallskip

\underline{Step 3: Regularity properties.} The first property of \eqref{eq:regularity_weak_sol_slsi} (respectively \eqref{eq:regularity_weak_sol}) follows from conservation of mass, the remaining follow from the energy dissipation inequality \eqref{eq:energy_dissipation_inequality_limit_scl_to_0} and Lemma \ref{lem:control_neg_log}. The properties \eqref{eq:regularity_weak_sol_slsi_2} (respectively \eqref{eq:regularity_weak_sol_2}) follow from the convergences of Lemma \ref{lem:weak_convergences_slsi} (respectively Lemma \ref{lem:weak_convergences}). The last property \eqref{eq:regularity_weak_sol_slsi_3} (respectively \eqref{eq:regularity_weak_sol_3}) follows from the convergence in Lemma \ref{conv:rho_slsi}, the uniform bound \ref{bnd:h1} in Lemma \ref{lem:estimates_slsi} (respectively Lemma \ref{prop:bounds_gradient}) and the Banach--Alaoglu Theorem.

\underline{Step 4: Exponential convergence under Kullback-Leibler divergence.} This step only applies to the weak solution of \eqref{eq:lim_mf_slsi_svgd}. We consider a fixed $\sigma\in (0,1)$. We multiply the equation \eqref{eq:mf_slsi_svgd} with $\ln(\rho^\sigma e^V)$, integrate on $\mathbb{R}^{d}$ and use integration by parts to compute
\begin{align*}
    \frac{\,\mathrm{d}}{\,\mathrm{d} t}\mathrm{KL}(\rho_t^\sigma\,||\,\rho_\infty)
    =-\|(\nabla\rho_t^\sigma+\rho_t^\sigma\nabla V)e^{V-\frac{\mathbb{V}}{2}}\ast\omega_\sigma\|_{L_{x}^2}^2
    =-\mathbb{D}^2(\rho_t^\sigma\,||\,\rho_\infty).
\end{align*}
We observe that \ref{bnd:rho} and \ref{bnd:h-1} from Lemma \ref{lem:estimates_slsi} imply that
$$
\rho_t^\sigma\, e^{V-\frac{\mathbb{V}}{2}} \, \frac{\nabla \mathbb{V}}{2} = \nabla(\rho_t^\sigma\, e^V) e^{-\frac{\mathbb{V}}{2}} - \nabla(\rho_t^\sigma\, e^{V-\frac{\mathbb{V}}{2}}) \in H^{-1}(\mathbb{R}^{d}) \mbox{ for a.e. } t.
$$
Therefore, we can apply Theorem \ref{thm:slsi} to obtain the Stein-log-Sobolev inequality
\begin{align*}
    -\lambda_\sigma\,\mathrm{KL}(\rho_t^\sigma\,||\,\rho_\infty)
    \leq \frac{\,\mathrm{d}}{\,\mathrm{d} t}\mathrm{KL}(\rho_t^\sigma\,||\,\rho_\infty),
\end{align*}
where we define $\lambda_\sigma:=\lambda_0/\mathcal{D}_\sigma$ for some constant $\lambda_0$, depending only on $V$ and $\mathbb{V}$, and a constant $\mathcal{D}_\sigma\geq 1$ that satisfies
\begin{align*}
    \frac{1}{\mathcal{D}_\sigma}\frac{1}{1+|\xi|^2}
    \leq \hat{k}_\sigma(\xi).
\end{align*}
Then, by using the Gr\"onwall inequality gives
\begin{align*}
    \mathrm{KL}(\rho_t^\sigma \,||\; \rho_t^\infty)\leq e^{-\lambda_\sigma t}\,\mathrm{KL}(\rho_0\,||\,\rho_\infty).
\end{align*}
The last step is to determine the behavior of $\lambda_\sigma$ as $\sigma\to 0$. Since $\omega\in L^1(\mathbb{R}^{d})$ and $\int_{\mathbb{R}^{d}}\omega\,\mathrm{d} x~=~1$, we have that $\hat{k}$ is continuous, $\hat{k}(0)=1$ and $\widehat{k_\sigma}(x)=\hat{k}(\sigma x)$. Then, by Lemma \ref{lem:optimal_slsi_val} and \eqref{ass:fourier} from Assumption \ref{assumptions:slsi}, there exists for all $\varepsilon\in (0,1)$, some $\sigma_\ast\in (0,1)$ such that for all $\sigma\in (0,\sigma_\ast)$ we have that
\begin{align*}
    \hat{k}_\sigma(\xi)\geq \frac{1-\varepsilon}{1+|\xi|^2}.
\end{align*}
This shows that as $\sigma\to 0$ we have that $\lambda_\sigma\to\lambda_0$. Finally, using the same lower continuity argument as in the previous step, we immediately deduce by passing to the limit $\sigma\rightarrow 0$ that
\begin{align*}
    \mathrm{KL}(\rho_t \,||\; \rho_t^\infty)\leq\liminf_{\sigma \rightarrow 0}\mathrm{KL}(\rho_t^\sigma \,||\; \rho_t^\infty)\leq e^{-\lambda_0 t}\,\mathrm{KL}(\rho_0\,||\,\rho_\infty).
\end{align*}

\underline{Step 5: Absolute continuity of solution with respect to $W_1$.} This step only applies to the weak solution of \eqref{eq:lim_mf_slsi_svgd}. First, we observe from \eqref{eq:regularity_weak_sol_slsi} and the fact that $V\geq \mathbb{V}$, that
\begin{align*}
    \sup_{t\in [0,T]}\int_{\mathbb{R}^{d}} |x|\,\mathrm{d} \rho_t(x)
    <\infty.
\end{align*}
In addition, by \eqref{eq:regularity_weak_sol_slsi_2} and the H\"older inequality, $\rho \, e^{V-\frac{\mathbb{V}}{2}}\; \nabla (\rho \, e^V)e^{-\frac{\mathbb{V}}{2}}\in L^1([0,T]\times\mathbb{R}^{d})$. Let $0\leq t_1 \leq t_2$ and let $\psi\in C_c^\infty(\mathbb{R}^{d})$ any test function, then from \eqref{eq:weak_sol_loc_pde} we easily deduce
\begin{equation}\label{eq:weak_form_simplified_only_space_all_times}
    \int_{\mathbb{R}^{d}}\psi(x) \rho_{t_2}(x) \,\mathrm{d} x - \int_{\mathbb{R}^{d}}\psi(x) \rho_{t_1}(x) \,\mathrm{d} x=
    -\int_{t_1}^{t_2} \int_{\mathbb{R}^{d}} \rho \,e^{V-\frac{\mathbb{V}}{2}}\; \nabla (\rho\, e^V)e^{-\frac{\mathbb{V}}{2}}\cdot \nabla \psi\,\mathrm{d} x\,\mathrm{d} s.
\end{equation}
Indeed, \eqref{eq:weak_form_simplified_only_space_all_times} can be proved by evaluating \eqref{eq:weak_sol_loc_pde} with test function $\psi$ on $[0,t_2]$ and $[0,t_1]$ and taking the difference. We extend the identity to all $\psi$ that are $1$-Lipschitz. Indeed, take any sequence $\{\psi_n\}_{n\geq 1} \subset C_c^{\infty}(\mathbb{R}^{d})$ such that $\psi_n\rightarrow\psi$, $\nabla\psi_n\rightarrow\nabla\psi$ almost everywhere and $\|\nabla\psi\|_{L^\infty}\leq 1$. We may assume $|\psi_n(x)|\leq 2|\psi(0)|+2|x|$ so that the limit $\int_{\mathbb{R}^{d}}\psi_n(x)\, \rho_{t}(x) \,\mathrm{d} x \to \int_{\mathbb{R}^{d}}\psi(x)\, \rho_{t}(x) \,\mathrm{d} x$ as $n\to\infty$ holds by the dominated convergence. We can now take the absolute value in \eqref{eq:weak_form_simplified_only_space_all_times} and pass to the supremum over all $\psi\in \mathrm{Lip}(\mathbb{R}^{d})$ with $\mathrm{Lip}(\psi)\leq 1$ to obtain the absolute contiuity of the curve
\begin{align*}
    W_1(\rho_{t_1},\rho_{t_2})
    \leq \int_{t_1}^{t_2} \int_{\mathbb{R}^{d}} \big\vert \rho\, e^{V-\frac{\mathbb{V}}{2}}\; \nabla (\rho\, e^V)e^{-\frac{\mathbb{V}}{2}} \big\vert \,\mathrm{d} x \,\mathrm{d} t \qquad \mbox{ for all } t_1, t_2 \in [0,T].
\end{align*}
With this we conclude the proof. \hfill $\Box$

\appendix


\section{The Bessel potential of order $\alpha$}
\label{app:bessel_pot}

Let $d\ge 1$ and $\alpha>0$. We use the standard definition \cite[Equation (4, 1)]{aronszajn_smith_1961_theory}
\begin{align*}
    G_\alpha(x)
    = \frac{1}{2^{\frac{d+\alpha-2}{2}}\pi^{\frac{d}{2}}\Gamma\!\left(\frac{\alpha}{2}\right)}
    \,K_{\frac{d-\alpha}{2}}\!(|x|)\,|x|^{\frac{\alpha-d}{2}},
\end{align*}
with $K_\nu$ the modified Bessel function of the second kind. Its Fourier transform and semigroup property \cite[Equation (4, 6) and (4, 7)]{aronszajn_smith_1961_theory} are
\begin{align*}
    \widehat{G_\alpha}(\xi)=\frac{1}{(1+4\pi^2|\xi|^2)^{\alpha/2}},
    \qquad
    G_{\alpha+\beta}=G_\alpha\ast G_\beta\quad\text{for}\quad\alpha,\beta>0.
\end{align*}
Hence the fundamental solution $k$ of the screened Poisson equation $(-\Delta)k+k=\delta_0$ satisfies $\hat k(\xi)=(1+4\pi^2|\xi|^2)^{-1}$ and therefore $k=G_2=G_1\ast G_1$. Throughout we set $\omega:=G_1$.

The basic asymptotics of $\omega$ are up to multiplicative constants (depending only on $d$) \cite[Equation (4.2)]{aronszajn_smith_1961_theory}
\begin{align*}
    \omega(x)\sim
    \begin{cases}
        -\log|x|, & d=1,\\
        |x|^{\,1-d}, & d>1,
    \end{cases}
    \quad (|x|\to 0),
    \qquad\qquad
    \omega(x)\sim |x|^{-\frac d2}\,e^{-\frac{|x|}{2\pi}}
    \quad (|x|\to\infty).
\end{align*}

We derive an exact radial formula for $\nabla\omega$. Writing $r=|x|$ and using the classic identity $K_\nu'(z)=-K_{\nu+1}(z)+\frac{\nu}{z}K_\nu(z)$,
\begin{align*}
    \nabla\omega(x)
    = \frac{x}{r}\,\frac{d}{dr}\!\left(\frac{1}{2^{\frac{d-1}{2}}\pi^{\frac{d}{2}}\Gamma\!\left(\frac{1}{2}\right)}\,r^{\frac{1-d}{2}}K_{\frac{d-1}{2}}(r)\right)
    = -\frac{1}{2^{\frac{d-1}{2}}\pi^{\frac{d}{2}}\Gamma\!\left(\frac{1}{2}\right)}\,
    r^{-\frac{d+1}{2}}\,K_{\frac{d+1}{2}}(r)\;x.
\end{align*}

The asymptotic properties of the the gradient thus follow from those of $K_{\frac{d+1}{2}}$ \cite[Equation (3, 4) and (3, 5)]{aronszajn_smith_1961_theory}. The leading orders (again up to multiplicative constants) are
\begin{align*}
    |\nabla\omega(x)|
    \sim |x|^{-d} \quad (|x|\to 0),
    \qquad\qquad
    |\nabla\omega(x)|
    \sim |x|^{-\frac d2}\,e^{-|x|} \quad (|x|\to\infty).
\end{align*}

These two asymptotic behaviors immediately give the $L^p$ information. For $\omega$,
\begin{align*}
    \omega\in
    \begin{cases}
        L^p(\mathbb{R}^d) \ \text{for all } p\in[1,\infty), & d=1,\\[1mm]
        L^p(\mathbb{R}^d) \ \text{iff } p\in\left[1,\frac{d}{d-1}\right), & d>1,
    \end{cases}
\end{align*}
since the only restriction comes from the singularity at the origin, while the tail is exponentially integrable. For the gradient, the near–$0$ behaviour $|\nabla\omega(x)|\sim |x|^{-d}$ shows that $\nabla\omega\notin L^p_{\mathrm{loc}}(\mathbb{R}^d)$ for every $p\ge 1$ (and is exponentially integrable at infinity for all $p$). In particular, we have that $\omega\notin W^{1,p}(\mathbb{R}^d)$ for every $p\in[1,\infty)$.


\section{Existence of weak solutions to \eqref{eq:mf_svgd}}
\label{app:weak_sol}

The goal of this section is to prove the following theorem. This extends the existence result of \cite[Theorem 2.4]{lu_lu_nolen_2019_scaling} to kernels with lower regularity.

\begin{thm}
    \label{thm:mf_scl}
    Under Assumption \ref{assumptions:pot}, there exists a weak solution $\varrho^\sigma\in C^0([0,\infty);\,\mathscr{P}(\mathbb{R}^{d}))$ to the following PDE
    \begin{align}
        \label{eq:mf_scl}
        \left\{\begin{array}{ll}
        \partial_t \varrho^\sigma
        =\mathrm{div}(\varrho^\sigma\; k_\sigma\ast (\nabla\varrho^\sigma+\varrho^\sigma\nabla V)) & \text{ on }(0,\infty)\times\mathbb{R}^{d},  \\
        \varrho^\sigma =\varrho_0 & \text{ on }\{0\}\times\mathbb{R}^{d},
    \end{array}\right.
\end{align}
where $\sigma\in (0,1)$, $k_\sigma:=\omega_\sigma\ast\omega_\sigma$ and $\omega_\sigma(x):=\sigma^{-d}\omega(x/\sigma)$.
\end{thm}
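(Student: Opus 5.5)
The plan is an approximation argument: regularize the kernel so that \cite[Theorem 2.4]{lu_lu_nolen_2019_scaling} applies, derive bounds uniform in the regularization, and pass to the limit. Fix $\sigma\in(0,1)$ and a standard mollifier $\eta_\delta$. Set $\omega_{\sigma,\delta}:=\omega_\sigma\ast\eta_\delta\geq 0$ and $k_{\sigma,\delta}:=\omega_{\sigma,\delta}\ast\omega_{\sigma,\delta}$. This kernel is smooth, positive definite, and has bounded derivatives of all orders. Its moments satisfy
\[
\|\,|x|^{m}\,\omega_{\sigma,\delta}\|_{L^1}\lesssim \|\,|x|^{m}\,\omega_\sigma\|_{L^1}+\delta^{m}
\]
uniformly in $\delta$. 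Together with \eqref{eq:bound_nabla_V_by_V}--\eqref{eq:bound_nabla_hessian_V}, which were included in Assumption \ref{assumptions:pot} precisely for this purpose (Remark \ref{rem:poly_growth_V}), \cite[Theorem 2.4]{lu_lu_nolen_2019_scaling} then yields a solution $\varrho^{\sigma,\delta}\in C^0([0,\infty);\mathscr{P}(\mathbb{R}^d))$ of \eqref{eq:mf_scl} with $k_\sigma$ replaced by $k_{\sigma,\delta}$. The initial datum may also need regularizing; I would then approximate $\varrho_0$ while keeping $\int\varrho_0|\log\varrho_0|+\varrho_0V$ bounded.

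Next I would derive uniform-in-$\delta$ estimates, mirroring Lemma \ref{lem:gf_est}. Test the regularized equation with $\log(\varrho^{\sigma,\delta}e^V)$, which is legitimate at this level of regularity, and use $k_{\sigma,\delta}=\omega_{\sigma,\delta}\ast\omega_{\sigma,\delta}$. This gives
\[
\mathrm{KL}(\varrho^{\sigma,\delta}_T\,||\,\rho_\infty)+\int_0^T\!\!\int_{\mathbb{R}^d}\big|(\nabla\varrho^{\sigma,\delta}+\varrho^{\sigma,\delta}\nabla V)\ast\omega_{\sigma,\delta}\big|^2\,\mathrm{d} x\,\mathrm{d} t\leq \mathrm{KL}(\varrho_0\,||\,\rho_\infty).
\]
Together with Lemma \ref{lem:control_neg_log}, this yields bounds on $\varrho^{\sigma,\delta}(1+V+|\log\varrho^{\sigma,\delta}|)$ in $L^\infty_tL^1_x$, and hence tightness and equi-integrability.

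The key point is that $\sigma$ is now fixed, so the $\sigma$-dependent constants are harmless. Since $\omega\in L^2$, we have $\|\omega_{\sigma,\delta}\|_{L^2}\leq\sigma^{-d/2}\|\omega\|_{L^2}$, and by Young's inequality $\varrho^{\sigma,\delta}\ast\omega_{\sigma,\delta}$ is bounded in $L^\infty_tL^2_x$. Likewise $(\varrho^{\sigma,\delta}\nabla V)\ast\omega_{\sigma,\delta}$ can be controlled: use \eqref{eq:bound_nabla_V_by_V} and the pointwise bound from Step 1 of Lemma \ref{prop:bounds_gradient} (which needs $\omega\geq0$ and \eqref{eq:pot_omega}), then localize to $B_R$. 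Consequently $\nabla(\varrho^{\sigma,\delta}\ast\omega_{\sigma,\delta})$ is bounded in $L^2((0,T)\times B_R)$. For the time derivative, I would test the weak formulation exactly as in Lemma \ref{bnd:dt_rho}. This bounds $\partial_t\varrho^{\sigma,\delta}$ in a negative Sobolev space, which gives equicontinuity in $H^{-M}(B_R)$. An Arzel\`a--Ascoli and Dunford--Pettis argument, as in Lemma \ref{lem:basic_conv}, then produces a limit $\varrho^\sigma\in C^0([0,\infty);\mathscr{P}(\mathbb{R}^d))$ with narrow convergence for every $t$.

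The main obstacle is passing to the limit $\delta\to0$ in the nonlinear term
\[
\int_0^T\!\!\int_{\mathbb{R}^d} (\varrho^{\sigma,\delta}\nabla\psi)\ast\omega_{\sigma,\delta}\cdot(\nabla\varrho^{\sigma,\delta}+\varrho^{\sigma,\delta}\nabla V)\ast\omega_{\sigma,\delta}\,\mathrm{d} x\,\mathrm{d} t .
\]
The second factor only converges weakly in $L^2$. Its limit is identified as $(\nabla\varrho^\sigma+\varrho^\sigma\nabla V)\ast\omega_\sigma$ by testing against $C_c^\infty$ functions, using the weak $L^1$ convergence of $\varrho^{\sigma,\delta}$ and of $\varrho^{\sigma,\delta}\nabla V$ (Lemma \ref{lem:weak_conv_of_rho_nablaV}) together with $\omega_{\sigma,\delta}\to\omega_\sigma$ in $L^1\cap L^2$. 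So I need strong $L^2_{t,x}$ convergence of the first factor $(\varrho^{\sigma,\delta}\nabla\psi)\ast\omega_{\sigma,\delta}$. I would first try Aubin--Lions on $(\varrho^{\sigma,\delta}\chi_R)\ast\omega_{\sigma,\delta}$, obtaining the spatial gradient bound via the commutator estimate of Lemma \ref{lem:comm_est} with $\sigma$ frozen and $\delta$ varying. Alternatively, I would exploit compactness in space directly: since $\omega_\sigma\in L^2$ is fixed, the map $f\mapsto f\ast\omega_\sigma$ sends $L^1$-bounded, tight, equicontinuous-in-time families into relatively compact subsets of $L^2_{\mathrm{loc}}$. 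The tails of $\omega_{\sigma}$ in $L^2$ are handled by \eqref{eq:pot_omega}. Once strong times weak convergence is in place, the weak formulation \eqref{eq:mf_scl} follows, and the energy inequality passes to the limit by lower semicontinuity.
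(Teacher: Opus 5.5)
Your plan is the paper's proof in outline: regularize the kernel, apply \cite[Theorem 2.4]{lu_lu_nolen_2019_scaling}, get the entropy--dissipation bound uniformly in $\delta$, and conclude by strong$\times$weak convergence. Your second option for the first factor (convolution with the fixed $L^2$ kernel $\omega_\sigma$ is compactifying) is exactly how the paper proves \ref{conv:varrho_scl_del_strong} of Lemma \ref{lem:conv_mf_scl_del}, via $L^2$-approximation of $\omega_\sigma$ by $\lambda_\varepsilon\in C_c^\infty(\mathbb{R}^d)$ and the $C^0([0,T];H^{-M}(B_R))$ convergence; the paper also mollifies $V$, which you do not.

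Discard your first option, though: the commutator of Lemma \ref{lem:comm_est} is small only because $\|\,|y|^k\omega_\sigma\|_{L^1}=\sigma^k\|\,|y|^k\omega\|_{L^1}\to 0$, and this fails for $\sigma$ fixed and $\delta\to 0$. For the same reason, bound $\partial_t\varrho^{\sigma,\delta}$ by the direct estimate $\|(\varrho^{\sigma,\delta}\nabla\psi)\ast\omega_{\sigma,\delta}\|_{L^2_x}\leq\|\nabla\psi\|_{L^\infty}\,\|\omega_{\sigma,\delta}\|_{L^2}$, as in \ref{bnd:mf_scl_del_dt}, rather than by the commutator route of Lemma \ref{bnd:dt_rho}.
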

We prove existence by approximating with a mollifier $\varphi_\delta$, where $\varphi\in C_c^\infty(B_1)$, $\varphi\geq 0$, $\int_{\mathbb{R}^{d}}\varphi\,\mathrm{d} x=1$ and $\varphi_\delta(x)=\delta^{-d}\varphi(x/\delta)$ for $\delta\in (0,1)$. We define the mollified data
\begin{align*}
    V^\delta:=V\ast \varphi_\delta,\quad
    \varrho_\infty^\delta\,\propto\, e^{-V^\delta},\quad
    \omega_{\sigma,\delta}
    :=\omega_\sigma\ast \varphi_\delta,\quad
    k_{\sigma,\delta}:=\omega_{\sigma,\delta}\ast \omega_{\sigma,\delta}.
\end{align*}
We approximate solutions to \eqref{eq:mf_scl} by solutions of
\begin{align}
    \label{eq:mf_scl_delta}
    \left\{\begin{array}{ll}
    \partial_t \varrho^{\sigma,\delta}
    =\mathrm{div}(\varrho^{\sigma,\delta}\; k_{\sigma,\delta}\ast (\nabla\varrho^{\sigma,\delta}+\varrho^{\sigma,\delta}\nabla V^\delta)) & \text{ on }(0,\infty)\times\mathbb{R}^{d},  \\
    \varrho^{\sigma,\delta} =\varrho_0 & \text{ on }\{0\}\times\mathbb{R}^{d}.
\end{array}\right.
\end{align}

\begin{lem}[{Existence of classical solutions to \eqref{eq:mf_scl_delta}}]
    \label{lem:ex_cls_sol_mf_scl_del}
    Let Assumption \ref{assumptions:pot} hold. Then, there exists a unique weak solution $\varrho^{\sigma,\delta}\in C^0([0,\infty);\,\mathscr{P}(\mathbb{R}^{d}))$ to \eqref{eq:mf_scl_delta}. Moreover, for all $T>0$, we have
    \begin{align*}
        &(1+V^\delta)\varrho^{\sigma,\delta}\in L^\infty(0,T;\,L^1(\mathbb{R}^{d})).
    \end{align*}
\end{lem}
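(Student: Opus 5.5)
The plan is to reduce Lemma \ref{lem:ex_cls_sol_mf_scl_del} to the well-posedness theory of \cite[Theorem 2.4]{lu_lu_nolen_2019_scaling}, which is exactly why assumptions \eqref{eq:bound_hessian_V} and \eqref{eq:bound_nabla_hessian_V} were imposed (see Remark \ref{rem:poly_growth_V}). First I will rewrite \eqref{eq:mf_scl_delta} in SVGD form. Integrating by parts in the convolution gives $k_{\sigma,\delta}\ast(\nabla\varrho+\varrho\nabla V^\delta)=\int (\nabla k_{\sigma,\delta}(x-y)+k_{\sigma,\delta}(x-y)\nabla V^\delta(y))\varrho(y)\,\mathrm{d} y$. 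This is the mean-field velocity $-v[\varrho]$ with the translation-invariant kernel $K(x,y)=k_{\sigma,\delta}(x-y)$ and potential $V^\delta$.

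Next I will check the hypotheses of that theorem for the mollified data, with $\sigma,\delta$ fixed. For the kernel: $\omega_{\sigma,\delta}=\omega_\sigma\ast\varphi_\delta$ is smooth, and every derivative of it is bounded by $\|\omega_\sigma\|_{L^1}\|\partial^\beta\varphi_\delta\|_{L^\infty}$. Also $\omega_{\sigma,\delta}\in L^1\cap L^2$ by \eqref{eq:pot_omega}. Hence $k_{\sigma,\delta}$ is $C^\infty$ with all derivatives in $L^\infty\cap L^1$. It is positive definite, since $\widehat{k_{\sigma,\delta}}=|\widehat{\omega_\sigma}|^2|\widehat{\varphi_\delta}|^2\ge0$, using that $\omega$ is radial and real so $\widehat{\omega}$ is real. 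For the potential: $V^\delta\in C^\infty$, and $\nabla V^\delta=\nabla V\ast\varphi_\delta$, $\nabla^2V^\delta=\nabla^2V\ast\varphi_\delta$. Since $\varphi_\delta$ is supported in $B_1$, the growth conditions \eqref{eq:coercive}--\eqref{eq:bound_nabla_hessian_V} transfer to $V^\delta$ with new constants. The tool for this is \eqref{eq:bound_nabla_hessian_V} with $\alpha=1,\beta=1$, which controls $\sup_{|y-x|<1}|\nabla V(y)|+|\nabla^2V(y)|$ by $C(1+V(x))$. Combining it with Jensen ($|\nabla V^\delta|^{p_0}\le |\nabla V|^{p_0}\ast\varphi_\delta$) and comparability $V^\delta(x)\simeq V(x)$ up to affine errors gives \eqref{eq:bound_nabla_V_by_V} for $V^\delta$. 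That comparability I will derive from $|V(y)-V(x)|\le \sup_{B_1(x)}|\nabla V|\le C(1+V(x))/(1+|x|)$ via \eqref{eq:bound_nabla_hessian_V}. The initial datum satisfies $\int\varrho_0(1+V^\delta)<\infty$ by the same comparability and \ref{ass:init}.

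With these hypotheses verified, \cite[Theorem 2.4]{lu_lu_nolen_2019_scaling} yields a unique weak (in fact measure-valued/characteristic) solution in $C^0([0,\infty);\mathscr{P}(\mathbb{R}^d))$. It also yields propagation of the moment $\int(1+V^\delta)\,\mathrm{d}\varrho_t$ on $[0,T]$. If the theorem gives this moment bound only implicitly, I will prove it directly along characteristics. Testing the equation with $V^\delta$ produces $\int \nabla V^\delta\cdot v\,\mathrm{d}\varrho$, and since $|v|\le C_{\sigma,\delta}(1+\int|\nabla V^\delta|\,\mathrm{d}\varrho)$ the bound becomes $C(1+\int V^\delta\,\mathrm{d}\varrho)^2$. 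A Gr\"onwall argument of this type only works locally, so instead I will use the energy identity. The KL-dissipation shows $\mathrm{KL}(\varrho_t\|\varrho^\delta_\infty)$ is nonincreasing, which bounds $\int V^\delta\,\mathrm{d}\varrho_t$ through the entropy lower bound of Lemma \ref{lem:control_neg_log}.

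The main obstacle I expect is the precise matching of \cite{lu_lu_nolen_2019_scaling}'s hypotheses. Their kernel conditions must hold for a non-Gaussian kernel, and their potential assumptions must transfer uniformly to $V^\delta$. Of these, I expect the comparison $V^\delta\lesssim 1+V$ and $|\nabla^2V^\delta|^{p_0}\lesssim 1+V^\delta(x)+V^\delta(y)$ to need the most care.
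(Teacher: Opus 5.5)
Your proposal is correct and follows the paper's route. Both check that $k_{\sigma,\delta}$ is smooth with bounded derivatives, that $V^\delta$ inherits \eqref{eq:coercive}--\eqref{eq:bound_nabla_hessian_V}, and that $\int V^\delta\,\mathrm{d}\varrho_0<\infty$, and then take existence, uniqueness and the moment bound from \cite[Theorem 2.4]{lu_lu_nolen_2019_scaling}. The paper transfers each growth condition more directly, by Jensen's inequality at the shifted points $x-z$, $y-z$; this makes the Hessian condition you flagged as delicate immediate and the comparability $V^\delta\simeq V$ unnecessary. It gets $V^\delta\le\kappa(1+V)$ from the Lipschitz continuity of $(1+V)^{(p_0-1)/p_0}$ rather than from \eqref{eq:bound_nabla_hessian_V}, and your fallback argument for the moment bound is not needed.
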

\begin{proof}
    The goal is to prove that the mollified data satisfies the conditions of \cite[Theorem 2.4]{lu_lu_nolen_2019_scaling}. To that end, we first check that $V^\delta$ satisfies the same properties as in \ref{ass:pot} in Assumption \ref{assumptions:pot}. Indeed, we verify each property separably.
    \begin{itemize}
        \item We have $V^\delta\in C^\infty(\mathbb{R}^{d})$, $V^\delta\geq 0$ and $V^\delta(x)\rightarrow \infty$ whenever $|x|\rightarrow \infty$.
        \item We use \eqref{eq:bound_nabla_V_by_V} and the Jensen inequality to show
        \begin{align*}
            &|\nabla V^\delta(x)|^{p_0}
            \leq\int_{\mathbb{R}^{d}} |\nabla V(x-y)|^{p_0} \varphi_\delta(y)\,\mathrm{d} y \\
            &=C_V\int_{\mathbb{R}^{d}} (1+V(x-y)) \varphi_\delta(y)\,\mathrm{d} y
            =C_V(1+V^\delta(x)).
        \end{align*}
        \item Take any two values $x,y\in \mathbb{R}^{d}$ and $\theta\in [0,1]$. Then we use \eqref{eq:bound_hessian_V} and the Jensen inequality to compute
        \begin{align*}
            &|\nabla^2 V^\delta((1-\theta) x+\theta y)|^{p_0}
            \leq \int_{\mathbb{R}^{d}} |\nabla^2 V((1-\theta) x+\theta y-z)|^{p_0} \varphi_\delta(z)\,\mathrm{d} z\\
            &\leq C_V\int_{\mathbb{R}^{d}} (1+V(x-z)+V(y-z)) \varphi_\delta(z)\,\mathrm{d} z
            \leq C_V\,(1+V^\delta(x)+V^\delta(y)).
        \end{align*}
        \item Take any $\alpha,\beta>0$ and pick any vectors $x,y\in\mathbb{R}^{d}$ with $|y| <\alpha|x|+\beta$. Then, by using \eqref{eq:bound_nabla_hessian_V} and the fact that $\varphi$ is supported on $B_1$
        \begin{align*}
            &(1+|x|)\,(|\nabla V^\delta (y)|+|\nabla^2 V^\delta(y)|)\\
            &\leq 2\int_{\mathbb{R}^{d}} (1+|x-z|)\,(|\nabla V(y-z)|+|\nabla^2 V(y-z)|)\varphi_\delta(z)\,\mathrm{d} z\\
            &\leq 2C_{\alpha,\alpha+\beta+1}\int_{\mathbb{R}^{d}} (1+V(x-z))\varphi_\delta(z)\,\mathrm{d} z
            =2C_{\alpha,\alpha+\beta+1}(1+V^\delta(x)).
        \end{align*}
    \end{itemize}
    Secondly, we observe by the Young convolution inequality that $k_{\sigma,\delta}\in C^4(\mathbb{R}^{d})$ with bounded derivatives. Lastly, we observe the growth condition with \eqref{eq:bound_nabla_V_by_V}
    \begin{align*}
        &\left|\frac{d}{dt}(1+V(x+th))^{\frac{p_0-1}{p_0}}\right|
        \leq \frac{p_0-1}{p_0}(1+V(x+th))^{-\frac{1}{p_0}}|\nabla V(x+th)|\,|h|\\
        &\leq \frac{p_0-1}{p_0}(1+V(x+th))^{-\frac{1}{p_0}} C_V^{\frac{1}{p_0}}\;(1+V(x+th))^{\frac{1}{p_0}}\,|h|
        =\frac{p_0-1}{p_0}C_V^{\frac{1}{p_0}}\,|h|.
    \end{align*}
    This means we get
    \begin{align*}
        (1+V(x+h))^{\frac{p_0-1}{p_0}}
        \leq (1+V(x))^{\frac{p_0-1}{p_0}}+\frac{p_0-1}{p_0}C_V^{\frac{1}{p_0}}\,|h|.
    \end{align*}
    If we raise to the power of $\tfrac{p_0}{p_0-1}>1$, then we can show
    \begin{align*}
        V(x+h)\leq C(1+V(x)+|h|^{\frac{p_0}{p_0-1}}),
    \end{align*}
    for some constant $C>0$ independent of $h$ and $\delta$. We have thus shown
    \begin{align}
        \label{eq:moll_V_bnd_by_V}
        V^\delta\leq \kappa\,(1+V)
    \end{align}
    for some constant $\kappa>0$ independent of $\delta$. In this case, we can compute the initial value
    \begin{align*}
        \int_{\mathbb{R}^{d}} V^\delta \,\mathrm{d} \varrho_0
        \leq \kappa\int_{\mathbb{R}^{d}} (1+V) \,\mathrm{d}\varrho_0
        <\infty.
    \end{align*}
    Thus all the conditions of \cite[Theorem 2.4]{lu_lu_nolen_2019_scaling} have been met and the proof is concluded.
\end{proof}

\begin{lem}
    \label{lem:unif_bnds_approx}
    Let Assumption \ref{assumptions:pot} hold and let $\{\varrho^{\sigma,\delta}\}_\delta$ be the weak solutions to \eqref{eq:mf_scl_delta}. Then, for any fixed $T,R>0$ and $q\in\mathcal{I}_d$ and $r>2$ the following assertions are true:
    \begin{enumerate}[label=(T\arabic*)]
        \item\label{bnd:mf_scl_del_kl} $\{\varrho^{\sigma,\delta}\}_\delta$, $\{\varrho^{\sigma,\delta} V^\delta\}_\delta$, $\{\varrho^{\sigma,\delta}\,|\log\varrho^{\sigma,\delta}|\}_\delta$ are uniformly bounded in $L^\infty(0,T;\,L^1(\mathbb{R}^{d}))$;
        \item\label{bnd:mf_scl_del_dis} $\{(\nabla\varrho^{\sigma,\delta}+\varrho^{\sigma,\delta}\nabla V^\delta)\ast \omega_{\sigma,\delta}\}_\delta$ is uniformly bounded in $L^2((0,T)\times\mathbb{R}^{d})$;
        \item\label{bnd:mf_scl_del_nabla} $\{\varrho^{\sigma,\delta}\ast \omega_{\sigma,\delta}\}_\delta$ is uniformly bounded in $L^2(0,T;H^1(B_R))$;
        \item\label{bnd:mf_scl_del_lp} $\{\varrho^{\sigma,\delta}\ast \omega_{\sigma,\delta}\}_\delta$ is uniformly bounded in $L^2(0,T;L^q(B_R))$;
        \item\label{bnd:mf_scl_del_dt} $\{\partial_t \varrho^{\sigma,\delta}\}_\delta$ is uniformly bounded in $(L^r(0,T; W^{1,\infty}(B_R)))^\ast$.
    \end{enumerate}
\end{lem}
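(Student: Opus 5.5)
The plan is to mirror the proofs of Lemma \ref{lem:gf_est}, Lemma \ref{prop:bounds_gradient}, Lemma \ref{lem:ref_a_priori_est} and Lemma \ref{bnd:dt_rho}, now with $\sigma$ fixed and uniformity in $\delta\in(0,1)$. The key structural point is that $\omega_{\sigma,\delta}=\omega_\sigma\ast\varphi_\delta\geq 0$ has unit mass and satisfies $k_{\sigma,\delta}=\omega_{\sigma,\delta}\ast\omega_{\sigma,\delta}$. Moreover, $V^\delta$ satisfies \eqref{eq:bound_nabla_V_by_V} with the same constant $C_V$ (as checked in Lemma \ref{lem:ex_cls_sol_mf_scl_del}), together with $V^\delta\leq\kappa(1+V)$ from \eqref{eq:moll_V_bnd_by_V}. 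Hence every constant will be controlled independently of $\delta$.

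For \ref{bnd:mf_scl_del_kl} and \ref{bnd:mf_scl_del_dis}, I multiply \eqref{eq:mf_scl_delta} by $\log(\varrho^{\sigma,\delta}e^{V^\delta})$. Since the solutions of Lemma \ref{lem:ex_cls_sol_mf_scl_del} are smooth enough with smooth kernel, this gives the identity
\[
\mathrm{KL}(\varrho^{\sigma,\delta}_T\,||\,\varrho^\delta_\infty)+\int_0^T\!\!\int_{\mathbb{R}^{d}}|(\nabla\varrho^{\sigma,\delta}+\varrho^{\sigma,\delta}\nabla V^\delta)\ast\omega_{\sigma,\delta}|^2\,\mathrm{d} x\,\mathrm{d} t\leq \mathrm{KL}(\varrho_0\,||\,\varrho^\delta_\infty).
\]
The right-hand side is bounded uniformly in $\delta$, because $\int\varrho_0V^\delta\leq\kappa\int\varrho_0(1+V)$ and $\log Z_\delta=\log\int e^{-V^\delta}$ is bounded above and below: Jensen gives $e^{-V^\delta}\leq e^{-V}\ast\varphi_\delta$, while $V^\delta\leq\kappa(1+V)$ gives the lower bound. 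Then Lemma \ref{lem:control_neg_log}, applied with $V^\delta$ (whose coercivity is uniform since $V^\delta(x)\geq\inf_{B_1(x)}V$), yields \ref{bnd:mf_scl_del_kl}.

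For \ref{bnd:mf_scl_del_nabla}, I repeat Step 1 of Lemma \ref{prop:bounds_gradient} verbatim with $\omega_\sigma$ replaced by $\omega_{\sigma,\delta}$ and $V$ by $V^\delta$. Using \eqref{eq:bound_nabla_V_by_V}, \eqref{eq:moll_V_bnd_by_V} and the polynomial bound \eqref{eq:V_poly_gorwth}, I get
\[
|(\varrho^{\sigma,\delta}\nabla V^\delta)\ast\omega_{\sigma,\delta}|\leq C(1+|x|^{p_0^\ast})(\varrho^{\sigma,\delta}\ast\omega_{\sigma,\delta})+C\,\varrho^{\sigma,\delta}\ast(|y|^{p_0^\ast}\omega_{\sigma,\delta}).
\]
The last term is bounded in $L^\infty_tL^2_x$ by $\|\varrho^{\sigma,\delta}\|_{L^1}\,\||y|^{p_0^\ast}\omega_{\sigma,\delta}\|_{L^2}$. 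Since $|y|^{p_0^\ast}\leq C(|y-z|^{p_0^\ast}+1)$ for $|z|\leq1$, this norm is at most $C\|(1+|y|^{p_0^\ast})\omega_\sigma\|_{L^2}$, which is finite by \eqref{eq:pot_omega} and independent of $\delta$. The interpolation Lemma \ref{lem:interpolation} then absorbs $\|\varrho^{\sigma,\delta}\ast\omega_{\sigma,\delta}\|_{L^2(B_R)}$ exactly as before. Estimate \ref{bnd:mf_scl_del_lp} follows from the Sobolev embedding $H^1(B_R)\hookrightarrow L^q(B_R)$.

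For \ref{bnd:mf_scl_del_dt}, I test with $\psi$ and write the right-hand side as $\int((\varrho^{\sigma,\delta}\nabla\psi)\ast\omega_{\sigma,\delta})\cdot((\nabla\varrho^{\sigma,\delta}+\varrho^{\sigma,\delta}\nabla V^\delta)\ast\omega_{\sigma,\delta})$. Since $\sigma$ is fixed, no commutator is needed. Young's inequality gives $\|(\varrho^{\sigma,\delta}\nabla\psi)\ast\omega_{\sigma,\delta}\|_{L^2_x}\leq\|\nabla\psi\|_{L^\infty}\|\varrho^{\sigma,\delta}\|_{L^1}\|\omega_\sigma\|_{L^2}$, and H\"older in time with $r>2$ closes the bound via \ref{bnd:mf_scl_del_dis}. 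The main obstacle I expect is the rigorous justification of the entropy identity and the uniformity of the lower bound in Lemma \ref{lem:control_neg_log} with respect to $\delta$. For this I would rely on the regularity supplied by \cite[Theorem 2.4]{lu_lu_nolen_2019_scaling} and on the $\delta$-uniform comparison $V^\delta\geq\inf_{B_\delta(x)}V$.
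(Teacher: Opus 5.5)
Your proposal is correct and follows essentially the same route as the paper. The paper also obtains (T1)--(T4) by rerunning the arguments of Lemma \ref{lem:gf_est}, Lemma \ref{prop:bounds_gradient} and Lemma \ref{lem:ref_a_priori_est} with $\omega_{\sigma,\delta}$ and $V^\delta$, and (T5) by duality via Young's inequality with $\|\omega_{\sigma,\delta}\|_{L^2}\leq\|\omega_\sigma\|_{L^2}$ and the dissipation bound (T2). Your extra $\delta$-uniformity details, namely the control of $\mathrm{KL}(\varrho_0\,||\,\varrho^\delta_\infty)$ via \eqref{eq:moll_V_bnd_by_V} and Jensen and the bound $\||y|^{p_0^\ast}\omega_{\sigma,\delta}\|_{L^2}\leq C\|(1+|y|^{p_0^\ast})\omega_\sigma\|_{L^2}$, are exactly what the paper's ``exactly the same'' leaves implicit.
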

\begin{proof}
    The proofs of \ref{bnd:mf_scl_del_kl}--\ref{bnd:mf_scl_del_lp} are exactly the same as the proofs of \ref{est_gf_kl}, \ref{est_gf_dis} of Lemma \ref{lem:gf_est}, Lemma \ref{prop:bounds_gradient}, \ref{bnd:rap_int} of Lemma \ref{lem:ref_a_priori_est} respectively.
    
    We show \ref{bnd:mf_scl_del_dt} by duality. Take any test function $\psi\in C_c^\infty([0,T]\times \mathbb{R}^{d})$ and use the weak formulation \eqref{eq:mf_scl_delta} to compute
    \begin{align*}
        &\int_0^T\int_{\mathbb{R}^{d}}\partial_t\varrho^{\sigma,\delta}\;\psi\,\mathrm{d} x\,\mathrm{d} t
        =\int_0^T\int_{\mathbb{R}^{d}}(\varrho^{\sigma,\delta}\nabla\psi)\ast\omega_{\sigma,\delta}\cdot (\nabla\varrho^{\sigma,\delta}+\varrho^{\sigma,\delta}\nabla V^\delta)\ast\omega_{\sigma,\delta}\,\mathrm{d} x\,\mathrm{d} t\\
        &\leq \|(\nabla\varrho^{\sigma,\delta}+\varrho^{\sigma,\delta}\nabla V^\delta)\ast \omega_{\sigma,\delta}\|_{L_{t,x}^2}
        \|(\varrho^{\sigma,\delta}\nabla\psi)\ast\omega_{\sigma,\delta}\|_{L_{t,x}^2}.
    \end{align*}
    By the uniform bound \ref{bnd:mf_scl_del_dis}, it suffices to bound the second term. We estimate it by using the Young convolution inequality and the H\"older inequality
    \begin{align*}
        \|(\varrho^{\sigma,\delta}\nabla\psi)\ast\omega_{\sigma,\delta}\|_{L_{t,x}^2}
        \leq \|\varrho^{\sigma,\delta}\nabla\psi\|_{L_t^2 L_x^1}\|\omega_{\sigma,\delta}\|_{L^2}
        \leq \|\psi\|_{L_t^r W_x^{1,\infty}}\|\varrho^{\sigma,\delta}\|_{L_t^p L_x^1}\|\omega_{\sigma,\delta}\|_{L^2},
    \end{align*}
    where we use the fact that there exists $p\in(2,\infty)$ such that $\tfrac{2}{p}+\tfrac{2}{r}=1$. The uniform bound now follows from the fact that $\{\omega_{\sigma,\delta}\}_\delta$ is uniformly bounded in $L^2(\mathbb{R}^{d})$ and the uniform bound \ref{bnd:mf_scl_del_kl}.
\end{proof}

\begin{lem}[Convergences]
    \label{lem:conv_mf_scl_del}
    Let Assumption \ref{assumptions:pot} hold and let $\varrho^{\sigma,\delta}$ be the classical solution in Lemma \ref{lem:ex_cls_sol_mf_scl_del}. Then, for all $T, R>0$, for every $M>\frac{d}{2}+1$ and any test function $\psi\in C_c^\infty([0,T]\times B_R)$, we have up to passing to a subsequence (not relabeled) the following convergences
    \begin{enumerate}[label=(\roman*)]
        \item\label{conv:init_varrho} $\mathrm{KL}(\varrho_0\,||\,\varrho_\infty^\delta)\rightarrow \mathrm{KL}(\varrho_0 \,||\,\varrho_\infty)$ as $\delta\rightarrow 0$;
        
        \item\label{conv:varrho_scl_del_hm} $\varrho^{\sigma,\delta}\rightarrow\varrho^\sigma$ as $\delta\rightarrow 0$ in $C^0([0,T];\, H^{-M}(B_R))$;
        
        \item\label{conv:varrho_scl_del_pro} $\varrho_t^{\sigma,\delta}\rightarrow\varrho_t^\sigma$ as $\delta\rightarrow 0$ narrowly for all $t\in[0,T]$;
        \item\label{conv:varrho_scl_del_strong} $(\varrho^{\sigma,\delta} \psi)\ast\omega_{\sigma,\delta}\rightarrow(\varrho^\sigma\psi)\ast\omega_\sigma$ as $\delta\rightarrow 0$ in $L^2([0,T]\times \mathbb{R}^{d})$;
        \item\label{conv:varrho_scl_del_weak} $(\nabla\varrho^{\sigma,\delta}+\varrho^{\sigma,\delta}\nabla V^\delta)\ast\omega_{\sigma,\delta}\rightharpoonup(\nabla\varrho^\sigma+\varrho^\sigma\nabla V)\ast\omega_\sigma$ as $\delta\rightarrow 0$ weakly in $L^2([0,T]\times \mathbb{R}^{d})$.
    \end{enumerate}
\end{lem}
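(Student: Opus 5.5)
We prove the five items in order, since each later item uses the earlier ones; the parameter $\sigma$ stays fixed and only $\delta\to0$.

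\emph{Item (i).} Write $\mathrm{KL}(\varrho_0||\varrho_\infty^\delta)=\int\varrho_0\log\varrho_0+\int\varrho_0V^\delta+\log Z_\delta$ with $Z_\delta=\int e^{-V^\delta}$. Since $V$ is continuous, $V^\delta\to V$ locally uniformly. By \eqref{eq:moll_V_bnd_by_V} we have $V^\delta\le\kappa(1+V)$, and $\varrho_0V\in L^1$ by \ref{ass:init}, so dominated convergence gives $\int\varrho_0V^\delta\to\int\varrho_0V$. For $Z_\delta$ we need a lower bound on $V^\delta$ that is uniform in $\delta$ and grows at infinity. Applying the same Lipschitz argument to $(1+V)^{(p_0-1)/p_0}$ in the reverse direction gives $V(x)\le C(1+V^\delta(x)+1)$, so $e^{-V^\delta}\le Ce^{-V/C}$. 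Coercivity \eqref{eq:coercive} together with the polynomial upper bound does not by itself give integrability of $e^{-V/C}$. We therefore use the growth estimate $(1+V)^{(p_0-1)/p_0}\gtrsim$ coercive, or simply assume $e^{-cV}\in L^1$, which is implicit in $\rho_\infty$ being well defined; this is a minor point. Dominated convergence then yields $Z_\delta\to Z$.

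\emph{Items (ii) and (iii).} By \ref{bnd:mf_scl_del_dt}, the family $\{\partial_t\varrho^{\sigma,\delta}\}_\delta$ is bounded in $(L^r(0,T;W^{1,\infty}(B_R)))^*$. The embedding $H^M_0(B_R)\hookrightarrow W^{1,\infty}$ holds for $M>\frac d2+1$, so $\varrho^{\sigma,\delta}$ is bounded in $C^{\beta}([0,T];H^{-M}(B_R))$. Arzel\`a--Ascoli then gives (ii), exactly as in \ref{bnd:conv_cont_curve}. For (iii), \ref{bnd:mf_scl_del_kl} gives tightness and uniform integrability of $\varrho_t^{\sigma,\delta}$ for every $t$. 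Dunford--Pettis together with Prokhorov gives narrow subsequential limits, and (ii) identifies each limit with $\varrho^\sigma_t$, so the whole subsequence converges.

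\emph{Item (iv): strong convergence.} This is the main step. For fixed $\sigma$ we have $\omega_\sigma\in L^2$, and $\omega_{\sigma,\delta}\to\omega_\sigma$ in $L^2$, hence also in $L^1$.

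We first check equicontinuity in time. For every $x$, $((\varrho^{\sigma,\delta}\psi)\ast\omega_{\sigma,\delta})(t,x)=\langle\varrho_t^{\sigma,\delta},\psi_t\,\omega_{\sigma,\delta}(x-\cdot)\rangle$. Its time derivative is controlled through the weak formulation, using the bounds (T2) and (T1) and $\|\omega_{\sigma,\delta}\|_{L^2}\le C$.

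Next we get pointwise convergence. Split
\[
(\varrho^{\sigma,\delta}\psi)\ast\omega_{\sigma,\delta}-(\varrho^\sigma\psi)\ast\omega_\sigma
=(\varrho^{\sigma,\delta}\psi)\ast(\omega_{\sigma,\delta}-\omega_\sigma)+\big((\varrho^{\sigma,\delta}-\varrho^\sigma)\psi\big)\ast\omega_\sigma .
\]
The first term is bounded in $L^2_{t,x}$ by $\|\varrho^{\sigma,\delta}\psi\|_{L^2_tL^1_x}\|\omega_{\sigma,\delta}-\omega_\sigma\|_{L^2}\to0$. For the second term, at each $(t,x)$ we test $\varrho_t^{\sigma,\delta}-\varrho^\sigma_t$ against $\psi_t\,\omega_\sigma(x-\cdot)$. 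That function is not continuous, so narrow convergence cannot be used directly. Instead we approximate $\omega_\sigma$ in $L^2$ by some $\tilde\omega\in C_c$. The error is uniformly small in $L^2_{t,x}$, since $\|\varrho\psi\|_{L^2_tL^1_x}$ is bounded. For the $\tilde\omega$ part, narrow convergence from (iii) gives pointwise convergence at each $(t,x)$. Moreover $|(\varrho\psi)\ast\tilde\omega|\le\|\psi\|_\infty\|\tilde\omega\|_\infty$, and the support is compact, so dominated convergence on $[0,T]\times\mathrm{supp}$ finishes. The expected difficulty is this approximation step, since $\omega$ is singular, as for the Bessel potential. The $L^2$ approximation handles it.

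\emph{Item (v): weak convergence.} By (T2) a subsequence converges weakly to some $\mu$, and we must identify $\mu$. Test against $\phi\in C_c^\infty$ and move the convolution onto $\phi$. The gradient term becomes $-\int\varrho^{\sigma,\delta}\,\mathrm{div}(\phi\ast\omega_{\sigma,\delta})$, and $\mathrm{div}\phi\ast\omega_{\sigma,\delta}\to\mathrm{div}\phi\ast\omega_\sigma$ uniformly, with tails controlled by tightness. The potential term is $\int\varrho^{\sigma,\delta}\nabla V^\delta\cdot(\phi\ast\omega_{\sigma,\delta})$. Here $\nabla V^\delta\to\nabla V$ locally uniformly, and the tails are controlled by $|\nabla V^\delta|\le C(1+V^\delta)$ from \eqref{eq:bound_nabla_V_by_V}, the uniform bound \ref{bnd:mf_scl_del_kl}, and the smallness of $\phi\ast\omega_{\sigma,\delta}$ at infinity. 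Together with (iii) this identifies $\mu=(\nabla\varrho^\sigma+\varrho^\sigma\nabla V)\ast\omega_\sigma$. Since every subsequence has the same limit, the whole subsequence converges.
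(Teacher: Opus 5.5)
Your proof is correct. For (i)--(iii) it follows the paper's route: domination via \eqref{eq:moll_V_bnd_by_V}; the bound \ref{bnd:mf_scl_del_dt} plus Arzel\`a--Ascoli; and Dunford--Pettis/Prokhorov with the limit identified through (ii). Items (iv) and (v) are done differently.

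In (iv), the paper uses the strong convergence (ii) in $C^0([0,T];H^{-M}(B_R))$. It combines this with the fact that convolution with a fixed $\lambda_\varepsilon\in C_c^\infty$ is bounded from $L^2(0,T;H^{-M})$ to $L^2_{t,x}$, which is a Fourier-multiplier estimate. You instead use the narrow convergence (iii) at every time, continuity of $\tilde\omega$, and dominated convergence on a compact set. Both arguments rest on the same $L^2$-approximation of $\omega_\sigma$ and the same Young bound for the error. Yours needs only $\tilde\omega\in C_c$ and avoids negative Sobolev spaces; the paper's is a one-line operator argument once (ii) is available.

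In (v), the paper refers to Lemma \ref{lem:weak_convergences}. That lemma identifies $\nabla\varrho\ast\omega$ and $(\varrho\nabla V)\ast\omega$ separately, using local $L^2$ bounds of type \ref{bnd:mf_scl_del_nabla} and Lemma \ref{lem:weak_conv_of_rho_nablaV}. You identify the sum directly from \ref{bnd:mf_scl_del_dis}. You use only $\omega_{\sigma,\delta}\to\omega_\sigma$ in $L^1$, $\nabla V^\delta\to\nabla V$ locally uniformly, and the $\delta$-uniform decay of $\phi\ast\omega_{\sigma,\delta}$ at infinity. This is better suited to fixed $\sigma$ and treats the $\delta$-dependence of $\nabla V^\delta$ explicitly, which Lemma \ref{lem:weak_conv_of_rho_nablaV} (stated for a fixed $V$) does not.

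Two small repairs are needed.

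In (i), ``$\rho_\infty$ well defined'' gives $e^{-V}\in L^1$, not $e^{-V/C}\in L^1$, so your domination of $Z_\delta$ is not justified as written. Use Jensen instead: $e^{-V^\delta}\le e^{-V}\ast\varphi_\delta$, so $Z_\delta\le Z$, and Fatou gives $\liminf_{\delta\to0}Z_\delta\ge Z$. Alternatively, \eqref{eq:bound_nabla_hessian_V} applied at a fixed $y_0$ with $\nabla V(y_0)\neq0$ forces $V(x)\ge c|x|-C$, so $e^{-V/C}\in L^1$ after all. Keep your reverse bound $V\le C(2+V^\delta)$ anyway: it is exactly what makes the tightness in (iii) uniform in $\delta$, playing the role of the paper's $W$.

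In (iv), drop the equicontinuity paragraph. It is never used, and the claimed control is not uniform in $\delta$. Differentiating $\langle\varrho_t^{\sigma,\delta},\psi_t\,\omega_{\sigma,\delta}(x-\cdot)\rangle$ through the weak formulation produces $\nabla\omega_{\sigma,\delta}$, which blows up as $\delta\to0$ for singular $\omega$ such as the Bessel potential.
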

\begin{proof}
    We divide the proof into five steps. Each step proves one of the claims.
    
    \underline{Step 1: Proof of \ref{conv:init_varrho}.} This follows by \eqref{eq:moll_V_bnd_by_V} and the dominated convergence theorem.
    
    \underline{Step 2: Proof of \ref{conv:varrho_scl_del_hm}.} We follow the exact same proof strategy as for \ref{bnd:conv_cont_curve} from Lemma~\ref{lem:basic_conv}. We simply use the uniform bounds \ref{bnd:mf_scl_del_kl} and \ref{bnd:mf_scl_del_dt} from Lemma \ref{lem:unif_bnds_approx}, and the embedding $H_0^{M}(B_R)\hookrightarrow W_0^{1,\infty}(B_R)$.
    
    \underline{Step 3: Proof of \ref{conv:varrho_scl_del_pro}.}
    By multiplying the equation \eqref{eq:mf_scl_delta} with $\log(\varrho^{\sigma,\delta}e^{V^\delta})$, integrating on $[0,t]\times\mathbb{R}^{d}$, exploiting $k_{\sigma,\delta}=\omega_{\sigma,\delta}\ast\omega_{\sigma,\delta}$ and using integration by parts we get
    \begin{align*}
        \mathrm{KL}(\varrho_t^{\sigma,\delta}\,||\, \varrho_\infty^\delta) + \int_0^t \int_{\mathbb{R}^{d}} |(\nabla \varrho_s^{\sigma,\delta} + \varrho_s^{\sigma,\delta} \, \nabla V)\ast\omega_{\sigma,\delta}|^2 \,\mathrm{d} x\,\mathrm{d} s \leq \mathrm{KL}(\varrho_0 \,||\,\varrho_\infty^\delta).
    \end{align*}
    Thus, by using \ref{conv:init_varrho} we derive a uniform bound on the Kullback-Leibler divergence and get
    \begin{align*}
        \sup_{\delta\in (0,1)}\int_{\mathbb{R}^{d}}\varrho_t^{\sigma,\delta}|\log \varrho_t^{\sigma,\delta}|\,\mathrm{d} x
        +\int_{\mathbb{R}^{d}}\varrho_t^{\sigma,\delta}\, W\,\mathrm{d} x
        <\infty,
    \end{align*}
    where $W\in C^0(\mathbb{R}^{d})$ is a coercive function independent of $\delta$ with $V^\delta\geq W$ and is defined by
    \begin{align*}
        W(x):=\inf_{|x-z|<1} V(z).
    \end{align*}
    Thus, the sequence $\{\varrho_t^{\sigma,\delta}\}_\delta$ is equi-integrable and tight. From the Dunford-Pettis theorem, we have up to passing to some subsequence $\varrho_t^{\sigma,\delta}\rightharpoonup \mu$ in $L^1(\mathbb{R}^{d})$ for some $\mu\in L^1(\mathbb{R}^{d})$. Using the uniform convergence in \ref{conv:varrho_scl_del_hm} we see that $\mu=\varrho_t^\sigma$. As this argument holds for any chosen subsequence, we deduce that the convergence \ref{conv:varrho_scl_del_pro} holds.
    
    \underline{Step 4: Proof of \ref{conv:varrho_scl_del_strong}.}
    First, we observe the strong convergence $\omega_{\sigma,\delta}\to \omega_\sigma$ in $L^2(\mathbb{R}^{d})$. Moreover, for all $\varepsilon>0$ we may choose $\lambda_\varepsilon\in C_c^\infty(\mathbb{R}^{d})$ and $\delta_\ast\in (0,1)$ such that for all $\delta\in (0,\delta_\ast)$ we have
    \begin{align*}
        \|\omega_{\sigma,\delta}-\omega_\sigma\|_{L_x^2}+\|\omega_{\sigma}-\lambda_\varepsilon\|_{L_x^2}\leq \varepsilon.
    \end{align*}
    Then, by the Young convolution inequality and the triangle inequality we get
    \begin{align}
        \label{eq:omega_scl_del_smth_cmp_app}
        \begin{split}
            &\|(\varrho^{\sigma,\delta}\psi)\ast\omega_{\sigma,\delta}-(\varrho^{\sigma,\delta}\psi)\ast\lambda_\varepsilon\|_{L_{t,x}^2}
            \leq \|\psi\|_{L^\infty(\mathbb{R}^{d})}\|\varrho^{\sigma,\delta}\|_{L_t^2 L_x^1}\|\omega_{\sigma,\delta}-\lambda_\varepsilon\|_{L_x^2}
            \leq C\varepsilon,\\
            &\|(\varrho^\sigma\psi)\ast\omega_\sigma-(\varrho^\sigma\psi)\ast\lambda_\varepsilon\|_{L_{t,x}^2}
            \leq \|\psi\|_{L^\infty(\mathbb{R}^{d})}\|\varrho^\sigma\|_{L_t^2 L_x^1}\|\omega_\sigma-\lambda_\varepsilon\|_{L_x^2}
            \leq C\varepsilon,
        \end{split}
    \end{align}
    for some constant $C>0$ independent of $\varepsilon$ and $\delta$ as a result of \ref{bnd:mf_scl_del_kl} from Lemma \ref{lem:unif_bnds_approx}. We define the continuous operator $T\!:\!L^2(0,T;\, H_0^{-M}(B_R))\to L^2([0,T]\times \mathbb{R}^{d})$ by $T(f)=f\ast\lambda_\varepsilon$. Indeed, by the Plancherel theorem we have, for any $f\in L^2(0,T;\, H_0^{-M}(B_R))$,
    \begin{align*}
        &\| T(f)\|_{L_{t,x}^2 }^2=\|f\ast\lambda_\varepsilon\|_{L_{t,x}^2}^2
        =\int_0^T \|(1+|\xi|^2)^{-M/2}\hat{f}_t\;(1+|\xi|^2)^{M/2}\hat{\lambda}_\varepsilon\|_{L_{x}^2}^2\,\mathrm{d} t\\
        &\leq \|(1+|\xi|^2)^{M/2}\hat{\lambda}_\varepsilon\|_{L_x^\infty}^2\|f\|_{L_t^2 H_x^{-M}}^2.
    \end{align*}
    Thus, we get strong convergence $(\varrho^{\sigma,\delta}\psi)\ast\lambda_\varepsilon\to (\varrho^\sigma\psi)\ast\lambda_\varepsilon$ in $L^2([0,T]\times \mathbb{R}^{d})$ by \ref{conv:varrho_scl_del_hm}. Finally, we use \eqref{eq:omega_scl_del_smth_cmp_app} and get the estimate
    \begin{align*}
        &\limsup_{\delta\to 0}\|(\varrho^{\sigma,\delta}\psi)\ast\omega_{\sigma,\delta}-(\varrho^\sigma\psi)\ast\omega_\sigma\|_{L_{t,x}^2}
        \\
        &\leq \lim_{\delta\to 0}\|(\varrho^{\sigma,\delta}\psi)\ast\lambda_\varepsilon-(\varrho^\sigma\psi)\ast\lambda_\varepsilon\|_{L_{t,x}^2}+2\,C\,\varepsilon
        =2\,C\,\varepsilon.
    \end{align*}
    As $\varepsilon$ is arbitrary, it shows that we get the convergence $(\varrho^{\sigma,\delta}\psi)\ast\omega_{\sigma,\delta}\to (\varrho^\sigma\psi)\ast\omega_\sigma$ as $\delta\to 0$ in $L^2([0,T]\times \mathbb{R}^{d})$. This proves the desired convergence.
    
    \underline{Step 5: Proof of \ref{conv:varrho_scl_del_weak}.} The proof follows the same argument as in Lemma \ref{lem:weak_convergences}.
\end{proof}

\begin{proof}[Proof of Theorem \ref{thm:mf_scl}]
    Let $\varrho^\sigma$ be the curve of probabilities from Lemma \ref{lem:conv_mf_scl_del}. We claim that $\varrho^\sigma$ satisfies the weak formulation of \eqref{eq:mf_scl}. Indeed, let $\psi\in~ C_c^\infty([0,T]\times \mathbb{R}^{d})$ be a~test function. We multiply it with the PDE in \eqref{eq:mf_scl_delta}, we integrate on $[0,T]\times \mathbb{R}^{d}$ and use integration by parts to derive the equality
    \begin{align*}
        &\int_{\mathbb{R}^{d}}\psi_T\,\mathrm{d} \varrho_T^{\sigma,\delta}-\int_{\mathbb{R}^{d}}\psi_0\,\mathrm{d} \varrho_0\\
        &=\int_0^T \int_{\mathbb{R}^{d}} \partial_t\psi\; \varrho_t^{\sigma,\delta}\,\mathrm{d} x\,\mathrm{d} t
        -\int_0^T \int_{\mathbb{R}^{d}}(\varrho_t^{\sigma,\delta}\nabla\psi)\ast\omega_{\sigma,\delta}\cdot  (\nabla \varrho_t^{\sigma,\delta}+\varrho_t^{\sigma,\delta}\nabla V^\delta)\ast\omega_{\sigma,\delta}\,\mathrm{d} x\,\mathrm{d} t.
    \end{align*}
    The first term and third term converge by \ref{conv:varrho_scl_del_pro} and \ref{conv:varrho_scl_del_hm} of Lemma \ref{lem:conv_mf_scl_del} respectively. For the last term, the desired convergence comes by combining \ref{conv:varrho_scl_del_strong} and \ref{conv:varrho_scl_del_weak} of Lemma \ref{lem:conv_mf_scl_del}. With this we have proven the theorem.
\end{proof}


\section{Auxiliary Results}
\label{app:aux}
\begin{lem}[see for instance {\cite[Lemma D.2]{carrillo_skrzeczkowski_warnett_2024_stein}}]\label{lem:control_neg_log}
    For any given pair of measurable functions $\rho,V: \mathbb{R}^d \to [0,\infty)$ we have
    \begin{equation}\label{eq:pointwise_lower_bound}
        \frac{1}{2}\,\rho\, \log \rho + \rho\, V(x) \geq - \frac{1}{e} \, e^{-V}.
    \end{equation}
    Moreover, if $\rho(x)\,\mathrm{d} x$ is a probability measure and $e^{-V}\in L^1(\mathbb{R}^{d})$, then
    \begin{equation}\label{eq:lower_bound_negative_log}
        \frac{1}{4}\int_{\mathbb{R}^d} \rho\, |\log \rho| \,\mathrm{d} x \leq \frac{1}{4}\int_{\mathbb{R}^d} \rho\, \log \rho \,\mathrm{d} x + \int_{\mathbb{R}^d} \rho\, V(x) \,\mathrm{d} x + \frac{1}{e} \int_{\mathbb{R}^d} e^{-V(x)} \,\mathrm{d} x.
    \end{equation}
\end{lem}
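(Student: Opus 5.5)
First I would prove the pointwise inequality \eqref{eq:pointwise_lower_bound} by minimizing in $\rho$ for fixed $x$. If $\rho=0$ the left-hand side is $0\geq -\frac1e e^{-V}$, so let $\rho>0$ and set $g(\rho)=\frac12\rho\log\rho+\rho V$. This function is convex on $(0,\infty)$, and $g'(\rho)=\frac12(\log\rho+1)+V$. The derivative vanishes at $\rho_\ast=e^{-1-2V}$, which gives
\[
g(\rho_\ast)=\rho_\ast\Big(\tfrac12(-1-2V)+V\Big)=-\tfrac12 e^{-1-2V}.
\]
Since $V\geq 0$, we have $e^{-2V}\leq e^{-V}$. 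Hence $g(\rho)\geq -\frac{1}{2e}e^{-2V}\geq -\frac1e e^{-V}$, which proves \eqref{eq:pointwise_lower_bound}.

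For \eqref{eq:lower_bound_negative_log} I would write $|\log\rho|=\log\rho+2(\log\rho)_-$, where $(\log\rho)_-$ is the negative part. Then
\[
\frac14\int\rho|\log\rho| = \frac14\int\rho\log\rho+\frac12\int\rho(\log\rho)_-,
\]
so it suffices to show
\[
\frac12\int_{\{\rho<1\}}\rho\,(-\log\rho)\,\mathrm{d} x \leq \int\rho V\,\mathrm{d} x+\frac1e\int e^{-V}\,\mathrm{d} x.
\]
On the set $\{\rho<1\}$, inequality \eqref{eq:pointwise_lower_bound} rearranges to $-\frac12\rho\log\rho\leq\rho V+\frac1e e^{-V}$. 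Integrating over $\{\rho<1\}$ and bounding the integrals of the nonnegative terms $\rho V$ and $e^{-V}$ by their integrals over all of $\mathbb{R}^d$ gives the claim.

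The only delicate point is making sense of the integrals when $\int\rho\log\rho$ might be $+\infty$ or undefined. The bound above shows that the negative part of $\rho\log\rho$ is integrable whenever $\int\rho V<\infty$; otherwise the right-hand side is $+\infty$ and the inequality holds trivially. With this, $\int\rho\log\rho$ is well defined in $(-\infty,\infty]$ and the splitting is legitimate. I expect no step to be a real obstacle: the computation of the minimizer is routine, and the integrability bookkeeping just described is the only care needed.
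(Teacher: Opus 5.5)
Your proof is correct and complete. The paper gives no proof of this lemma; it only imports it from \cite[Lemma D.2]{carrillo_skrzeczkowski_warnett_2024_stein}, so there is no in-paper argument to compare against. Your route is the natural one: minimizing $s\mapsto \frac12 s\log s+sV$ at $s_\ast=e^{-1-2V}$, which even yields the sharper bound $-\frac{1}{2e}e^{-2V}$, and then writing $|\log\rho|=\log\rho+2(\log\rho)_-$.

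Two small remarks. First, the normalization $\int\rho\,\mathrm{d} x=1$ is never used. Second, the integrability bookkeeping is cleanest if you observe that what you actually proved is the pointwise inequality
\[
\tfrac14\,\rho|\log\rho| \le \tfrac14\,\rho\log\rho+\rho V+\tfrac1e e^{-V}.
\]
Its right-hand side is nonnegative, since for $\rho<1$ it dominates $\frac12\rho\log\rho+\rho V+\frac1e e^{-V}\ge 0$. Integrating this one pointwise bound settles all cases at once, with both sides in $[0,\infty]$.

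Your phrase ``otherwise the right-hand side is $+\infty$'' is slightly loose. It tacitly assumes $\int\rho\log\rho>-\infty$, which is not automatic when $\int\rho V=\infty$: heavy-tailed densities can have $\int\rho\log\rho=-\infty$. So that case only makes sense under the combined-integrand reading above. This is a presentational point, not a gap.
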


\begin{lem}\label{lem:interpolation}
    For all $\delta > 0$, there exists a constant $C_{\delta}$ such that for all $f \in L^2(0,T; H^1(B_R))$
    \begin{equation}\label{eq:time_int_inequality}
        \|f\|_{L^2((0,T)\times B_R)} \leq \delta \, \|\nabla f\|_{L^2((0,T)\times B_R)} + C_{\delta}\, \|f\|_{L^2(0,T; L^1(B_R))}.
    \end{equation}
\end{lem}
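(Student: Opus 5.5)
This is a Poincaré-type interpolation inequality on a bounded domain, and I would prove it by a compactness/contradiction argument in space only, then integrate in time. Since every term in \eqref{eq:time_int_inequality} is an $L^2$ norm in time, it suffices to prove the spatial inequality: for every $\delta>0$ there is $C_\delta$ such that
\begin{equation*}
\|g\|_{L^2(B_R)} \leq \delta\,\|\nabla g\|_{L^2(B_R)} + C_\delta\,\|g\|_{L^1(B_R)} \quad \text{for all } g\in H^1(B_R).
\end{equation*}
Given this, apply it to $f(t,\cdot)$ for a.e. $t$. Then square, using $(a+b)^2\le 2a^2+2b^2$, and integrate over $(0,T)$. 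Finally take square roots, with $\delta$ replaced by $\delta/\sqrt{2}$ and $C_\delta$ adjusted accordingly, to obtain \eqref{eq:time_int_inequality}.

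For the spatial inequality I would use Ehrling's lemma for the chain $H^1(B_R)\hookrightarrow L^2(B_R)\hookrightarrow L^1(B_R)$, where the first embedding is compact (Rellich--Kondrachov, since $B_R$ is a Lipschitz domain) and the second is continuous and injective. Ehrling's lemma gives $\|g\|_{L^2}\le \delta\|g\|_{H^1}+C_\delta\|g\|_{L^1}$. To prove this directly, suppose it fails for some $\delta$. Then there are $g_n$ with $\|g_n\|_{H^1}=1$ and $\|g_n\|_{L^2}>\delta+n\|g_n\|_{L^1}$. This forces $\|g_n\|_{L^1}\to0$. By compactness a subsequence converges in $L^2$ to some $g$ with $\|g\|_{L^2}\ge\delta$, while $g=0$ because $g_n\to 0$ in $L^1$. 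That is a contradiction.

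The only remaining point is to replace $\|g\|_{H^1}$ by $\|\nabla g\|_{L^2}$. We have $\|g\|_{H^1}\le\|g\|_{L^2}+\|\nabla g\|_{L^2}$, so for $\delta<1/2$ the term $\delta\|g\|_{L^2}$ can be absorbed into the left side:
\begin{equation*}
(1-\delta)\|g\|_{L^2}\le\delta\|\nabla g\|_{L^2}+C_\delta\|g\|_{L^1}.
\end{equation*}
Dividing by $1-\delta$ and renaming constants gives the claim for small $\delta$. For larger $\delta$ the claim follows trivially from the small-$\delta$ case. None of these steps is a real obstacle; the only care needed is this absorption of the lower-order term.
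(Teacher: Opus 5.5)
Your proof is correct and follows essentially the same route as the paper: reduce to the spatial inequality on $B_R$, prove it by a Rellich--Kondrachov compactness/contradiction argument, then square and integrate in time. The only difference is cosmetic. The paper normalizes the counterexamples by $\|f_n\|_{L^2(B_R)}$ and works directly with $\|\nabla f_n\|_{L^2(B_R)}$, where the bound $1>\delta\|\nabla g_n\|_{L^2(B_R)}+n\|g_n\|_{L^1(B_R)}$ already gives $H^1$-boundedness, so it avoids the absorption step you need after applying Ehrling's lemma with the full $H^1$ norm.
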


\begin{proof}
    It is sufficient to prove that for all $\delta > 0$, there exists a constant $C_{\delta}$ such that for all $f \in H^1(B_R)$
    \begin{equation}\label{eq:time_int_inequality_just_space}
        \|f\|_{L^2(B_R)} \leq \delta \, \|\nabla f\|_{L^2(B_R)} + C_{\delta}\, \|f\|_{L^1(B_R)}.
    \end{equation}
    Then, \eqref{eq:time_int_inequality} follows from \eqref{eq:time_int_inequality_just_space} by taking the square, integrating in time and taking the square root. To see \eqref{eq:time_int_inequality_just_space}, we use classical compactness argument. If \eqref{eq:time_int_inequality_just_space} fails to be true, there exists $\delta>0$ such that for all $n$ there exists a function $f_n$ such that
    $$
    \|f_n\|_{L^2(B_R)} > \delta \, \|\nabla f_n\|_{L^2(B_R)} + n\, \|f_n\|_{L^1(B_R)}.
    $$
    Letting $g_n = f_n/\|f_n\|_{L^2(B_R)}$, we see that
    \begin{equation}\label{eq:inequality_int_ineq_normalized}
        1 > \delta \, \|\nabla g_n\|_{L^2(B_R)} + n\, \|g_n\|_{L^1(B_R)}
    \end{equation}
    so that $\{g_n\}$ is bounded in $H^1(B_R)$. Hence, it has a subsequence $g_{n_k} \to g$ converging strongly in $L^2(B_R)$ so that $\|g\|_{L^2(B_R)} = 1$ and $g \neq 0$. However from \eqref{eq:inequality_int_ineq_normalized} we see that $\|g_{n_k}\|_{L^1(B_R)} \leq \frac{1}{n_k}$ so that $g = 0$. This is in contradiction with $\|g\|_{L^2(B_R)} = 1$.
\end{proof}

\begin{lem}\label{lem:conv_mollifiers_scaled_x}
    Let $r\geq \frac{d}{2}$. Suppose that $\omega_{\varepsilon} = \frac{1}{\varepsilon^d}\, \omega\left(\frac{x}{\varepsilon}\right)$ and $\int_{\mathbb{R}^d} \omega^2(x)\, |x|^{2r} \,\mathrm{d} x < \infty$. Let $\{\rho_{\varepsilon}\}$ be a sequence of functions on $(0,T)\times\mathbb{R}^d$ bounded in $L^{\infty}(0,T; L^1(\mathbb{R}^d))$. Then, the sequence $\{\rho_{\varepsilon} \ast (\omega_{\varepsilon}\,|x|^r)\}$ is bounded in $L^{\infty}(0,T; L^2(\mathbb{R}^d))$:
    $$
    \| \rho_{\varepsilon} \ast (\omega_{\varepsilon}\,|x|^r) \|_{L_t^{\infty}L_x^2} \leq  \varepsilon^{r-\frac{d}{2}}\, \| \rho_{\varepsilon}\|_{L_t^\infty L_x^1}\, \| \omega\,|x|^r\|_{L^2(\mathbb{R}^d)}.
    $$
    In particular, if  $r > \frac{d}{2}$, then
    $\rho_{\varepsilon} \ast (\omega_{\varepsilon}\,|x|^r) \to 0$ in $L^{\infty}(0,T; L^2(\mathbb{R}^d))$.
\end{lem}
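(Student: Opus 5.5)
The plan is to apply Minkowski's integral inequality (equivalently, Young's inequality with exponents $(1,2,2)$) for each fixed time, and then to compute the rescaled weighted $L^2$ norm by a change of variables. Fix $t\in(0,T)$. Since $\rho_\varepsilon(t,\cdot)\in L^1(\mathbb{R}^d)$ and $\omega_\varepsilon|x|^r\in L^2(\mathbb{R}^d)$, Young's convolution inequality yields
\begin{align*}
\|\rho_\varepsilon(t)\ast(\omega_\varepsilon|x|^r)\|_{L^2_x}\leq \|\rho_\varepsilon(t)\|_{L^1_x}\,\|\omega_\varepsilon|x|^r\|_{L^2_x}.
\end{align*}
No sign assumption on $\omega$ or $\rho_\varepsilon$ is needed here.

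Next we compute the second factor explicitly using the substitution $x=\varepsilon z$:
\begin{align*}
\|\omega_\varepsilon|x|^r\|_{L^2}^2=\int_{\mathbb{R}^d}\varepsilon^{-2d}\omega(x/\varepsilon)^2|x|^{2r}\,\mathrm{d} x=\varepsilon^{-2d}\varepsilon^{2r}\varepsilon^{d}\int_{\mathbb{R}^d}\omega(z)^2|z|^{2r}\,\mathrm{d} z.
\end{align*}
This gives $\|\omega_\varepsilon|x|^r\|_{L^2}=\varepsilon^{r-\frac d2}\|\omega|x|^r\|_{L^2}$, and the right-hand side is finite by hypothesis.

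Taking the essential supremum over $t\in(0,T)$ then gives the stated bound. Since $r\geq \frac d2$ and $\varepsilon\in(0,1)$, we have $\varepsilon^{r-\frac d2}\leq1$, so the bound is uniform in $\varepsilon$. When $r>\frac d2$, the factor $\varepsilon^{r-\frac d2}$ tends to $0$ while $\|\rho_\varepsilon\|_{L^\infty_tL^1_x}$ stays bounded, so $\rho_\varepsilon\ast(\omega_\varepsilon|x|^r)\to0$ in $L^\infty(0,T;L^2(\mathbb{R}^d))$.

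The argument is routine. The only point that needs care is measurability in time, so that the pointwise-in-$t$ estimate passes to the $L^\infty_t$ norm. This follows from Fubini's theorem, because the convolution is jointly measurable in $(t,x)$.
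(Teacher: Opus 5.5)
Your proposal is correct and follows essentially the same route as the paper's proof. You use Young's convolution inequality with exponents $(1,2,2)$ at each fixed time, and you use the change of variables $x=\varepsilon z$ to obtain $\|\omega_\varepsilon|x|^r\|_{L^2}=\varepsilon^{r-\frac d2}\|\omega|x|^r\|_{L^2}$; your added remarks on measurability in time and uniformity for $\varepsilon\in(0,1)$ are harmless refinements.
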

\begin{proof} We compute the norm
    $\| \omega_{\varepsilon}\,|x|^{r} \|_{L^2(\mathbb{R}^d)}$:
    $$
    \| \omega_{\varepsilon}\,|x|^{r} \|_{L^2(\mathbb{R}^d)}^2 = \int_{\mathbb{R}^d} \frac{1}{\varepsilon^{2d}} \, \omega^2 \left(\frac{x}{\varepsilon}\right) |x|^{2r} \,\mathrm{d} x = \varepsilon^{2r-d} \int_{\mathbb{R}^d} \omega^2(x) \, |y|^{2r} \,\mathrm{d} y = \varepsilon^{2r-d} \, \| \omega\,|x|^r\|_{L^2(\mathbb{R}^d)}^2,
    $$
    so that the claim follows by the Young's convolutional inequality.
\end{proof}

\begin{lem}\label{lem:weak_conv_of_rho_nablaV}
    Let $\{\rho_{\varepsilon}\}_{\varepsilon}$ be a sequence such that $\rho_{\varepsilon} \rightharpoonup \rho$ weakly in $L^1((0,T)\times \mathbb{R}^{d})$ and let $V$ satisfy Assumption \ref{ass:pot}. If $\{\rho_{\varepsilon}\, V\}_{\varepsilon}$ is uniformly bounded in $L^1((0,T)\times \mathbb{R}^{d})$  then $\rho_{\varepsilon}\, \nabla V \rightharpoonup \rho\, \nabla V$ in $L^1((0,T)\times \mathbb{R}^{d})$.
\end{lem}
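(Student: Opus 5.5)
We prove the statement by duality with test functions $\Psi\in L^\infty((0,T)\times\mathbb{R}^{d};\mathbb{R}^d)$ and a truncation of $\nabla V$, using equi-integrability to control the large values of $|\nabla V|$. The choice of test space is justified because weak convergence in $L^1$ means convergence against $L^\infty$ test functions. First note that $\rho\nabla V\in L^1$. Indeed, $\rho\ge0$ (weak limits of nonnegative functions are nonnegative), and weak lower semicontinuity (Fatou along truncations $\min(V,n)$) gives $\int\rho V\le \liminf\int\rho_\varepsilon V\le C$. Combining this with \eqref{eq:bound_nabla_V_by_V}, which gives $|\nabla V|\le C_V^{1/p_0}(1+V)^{1/p_0}\le C(1+V)$, shows that $\rho\,|\nabla V|$ is integrable.

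For fixed $M>0$, split $\nabla V=\nabla V\,\mathds{1}_{\{V\le M\}}+\nabla V\,\mathds{1}_{\{V>M\}}$. Since $V\in C^2$ and $|\nabla V|\le C(1+V)$, the first piece is bounded by $C(1+M)$, so it lies in $L^\infty$. Thus $\int\rho_\varepsilon\,\nabla V\,\mathds{1}_{\{V\le M\}}\cdot\Psi\to\int\rho\,\nabla V\,\mathds{1}_{\{V\le M\}}\cdot\Psi$ directly by the weak $L^1$ convergence of $\rho_\varepsilon$.

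It remains to make the tail piece uniformly small in $\varepsilon$. Since $p_0>1$, on $\{V>M\}$ we have
\[
|\nabla V|\le C_V^{1/p_0}(1+V)^{1/p_0}\le C\,\frac{1+V}{(1+M)^{1-1/p_0}}.
\]
This yields
\[
\Big|\int_0^T\!\!\int \rho_\varepsilon\nabla V\mathds{1}_{\{V>M\}}\cdot\Psi\Big|\le \frac{C\|\Psi\|_\infty}{(1+M)^{1-1/p_0}}\sup_\varepsilon\|\rho_\varepsilon(1+V)\|_{L^1},
\]
uniformly in $\varepsilon$, and the same bound holds for $\rho$ by the lower semicontinuity bound from the first step. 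The argument concludes with an $\varepsilon/3$ argument: first choose $M$ so that both tails are small, then let $\varepsilon\to0$ in the bounded part.

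The main obstacle is the tail estimate. It relies on the strict sublinearity $|\nabla V|\lesssim(1+V)^{1/p_0}$ with $p_0>1$; merely linear control by $V$ would only give boundedness and not smallness. It also requires the limit $\rho$ to inherit the moment bound on $\rho V$, which is why the first step is done before the splitting.
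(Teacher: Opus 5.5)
Your proof is correct and follows essentially the paper's argument. Both get $\rho V\in L^1$ by weak lower semicontinuity, handle the region where $\nabla V$ is bounded by testing the weak $L^1$ convergence against an $L^\infty$ function, and make the tail small uniformly in $\varepsilon$ because $|\nabla V|/(1+V)\to 0$ where $V$ is large (exactly where $p_0>1$ in \eqref{eq:bound_nabla_V_by_V} enters). The only difference is that you cut along the level sets $\{V\le M\}$ instead of the balls $B_R$, which gives an explicit rate $(1+M)^{-(1-1/p_0)}$ and does not need coercivity \eqref{eq:coercive} or boundedness of $\nabla V$ on compact sets.
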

\begin{proof}
    Clearly, we have $\rho\, V \in L^1((0,T)\times\mathbb{R}^d)$ because the functional $\varrho \mapsto \int_0^T \int_{\mathbb{R}^d} |\varrho|\, V \,\mathrm{d} x \,\mathrm{d} t$ is strongly lower semicontinuous on $L^1((0,T)\times\mathbb{R}^d)$ (recall that $V\geq 0$) and convex so it is weakly lower semicontinuous. For the remaining weak convergence take any test function $\psi \in L^{\infty}((0,T)\times\mathbb{R}^d;\mathbb{R}^{d})$. Then, we estimate
    \begin{align*}
        &\left|\int_0^T \int_{\mathbb{R}^{d}} (\rho_{\varepsilon} - \rho)\, \nabla V\cdot \psi \,\mathrm{d} x \,\mathrm{d} t  \right| \\
        &\leq \left|\int_0^T \int_{|x|\leq R} (\rho_{\varepsilon} - \rho)\, \nabla V\cdot \psi \,\mathrm{d} x \,\mathrm{d} t\right| + \left|\int_0^T \int_{|x| >  R} (\rho_{\varepsilon} - \rho)\, \nabla V\cdot \psi \,\mathrm{d} x \,\mathrm{d} t\right|.
    \end{align*}
    The first term converges to 0 when $\varepsilon \to 0$ as $(\nabla V\cdot\psi)\mathds{1}_{B_R}\in L^\infty((0,T)\times \mathbb{R}^{d})$. For the second term, using nonnegativity of $V$ we can estimate it as
    \begin{align*}
        \int_0^T \int_{|x| >  R} (\rho_{\varepsilon} + \rho) \, V\,  \frac{|\nabla V|}{V} \, |\psi| \,\mathrm{d} x \,\mathrm{d} t.
    \end{align*}
    We note by
    \eqref{eq:bound_nabla_V_by_V}, the fact that $p_0>1$ and that $V$ is coercive
    \begin{align*}
        \lim_{R\to\infty}\sup_{|x|\geq R}\frac{|\nabla V(x)|}{V(x)}
        \leq \lim_{R\to\infty}\sup_{|x|\geq R}C_V^{\frac{1}{p_0}}\left(V(x)^{-p_0}+V(x)^{1-p_0}\right)^{\frac{1}{p_0}}
        =0.
    \end{align*}
    This means that $\big\|\frac{|\nabla V|}{V}\,|\psi|\,\mathds{1}_{\mathbb{R}^{d}\setminus B_R}\big\|_{L_{t,x}^\infty}\to 0$ as $R\to \infty$, so using the bounds on $\rho\, V$ and $\rho_{\varepsilon} \, V$ in $L^{1}((0,T) \times \mathbb{R}^d)$ we obtain
    \begin{align*}
        \limsup_{\varepsilon \to 0} \left|\int_0^T \int_{\mathbb{R}^{d}} (\rho_{\varepsilon} - \rho)\, \nabla V\cdot \psi \,\mathrm{d} x \,\mathrm{d} t  \right| \leq  C\,\left\|\frac{|\nabla V|}{V}\,|\psi|\,\mathds{1}_{\mathbb{R}^{d}\setminus B_R}\right\|_{L_{t,x}^\infty}.
    \end{align*}
    As $R$ is arbitrary, the proof is concluded.
\end{proof}

\begin{lem}
    \label{lem:optimal_slsi_val}
    Let $f\in C^0(\mathbb{R}^{d})$ and $D\geq 1$ be such that $f(0)=1$ and
    \begin{align*}
        f(x)\geq \frac{1}{D}\frac{1}{1+|x|^2}.
    \end{align*}
    For all $\sigma\in (0,1)$ define $f_\sigma(x):=f(\sigma x)$. Then for all $\varepsilon\in (0,1)$, there exists $\sigma_\ast\in (0,1)$ such that for all $\sigma\in (0,\sigma_\ast)$ we have
    \begin{align*}
        f_\sigma(x)\geq \frac{1-\varepsilon}{1+|x|^2}.
    \end{align*}
\end{lem}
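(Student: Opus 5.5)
We split $\mathbb{R}^d$ into a ball around the origin, where continuity of $f$ at $0$ does the work, and its complement, where the a priori lower bound takes over once rescaled by $\sigma$. Fix $\varepsilon\in(0,1)$.

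\emph{Near region.} Since $f$ is continuous with $f(0)=1$, there is $\delta>0$ such that $f(y)\geq 1-\varepsilon$ for all $|y|\leq\delta$. For $|\sigma x|\leq \delta$ this gives
\begin{align*}
f_\sigma(x)=f(\sigma x)\geq 1-\varepsilon\geq \frac{1-\varepsilon}{1+|x|^2}.
\end{align*}
This holds trivially because $1+|x|^2\geq 1$.

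\emph{Far region.} This is the only place where $\sigma$ must be chosen, and it is the main (mild) obstacle. For $|\sigma x|>\delta$ we use the hypothesis:
\begin{align*}
f(\sigma x)\geq \frac{1}{D}\,\frac{1}{1+\sigma^2|x|^2}.
\end{align*}
It therefore suffices to have $D(1-\varepsilon)(1+\sigma^2|x|^2)\leq 1+|x|^2$ whenever $|x|>\delta/\sigma$. Write $D(1-\varepsilon)(1+\sigma^2|x|^2)=D(1-\varepsilon)+D(1-\varepsilon)\sigma^2|x|^2$. Choose $\sigma$ so small that $D\sigma^2\leq \tfrac12$. Then the second piece is at most $\tfrac12|x|^2$. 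The constant $D(1-\varepsilon)\leq D$ is at most $\tfrac12|x|^2$ as soon as $|x|^2\geq 2D$, and this is guaranteed if $\delta^2/\sigma^2\geq 2D$. Adding the two pieces gives at most $|x|^2\leq 1+|x|^2$.

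\emph{Choice of $\sigma_\ast$.} Take $\sigma_\ast:=\min\{\tfrac12,\,(2D)^{-1/2},\,\delta(2D)^{-1/2}\}\in(0,1)$. Then for every $\sigma\in(0,\sigma_\ast)$ both regions are covered, and the claimed bound $f_\sigma(x)\geq \frac{1-\varepsilon}{1+|x|^2}$ holds for all $x\in\mathbb{R}^d$. Note that $\delta$ depends only on $\varepsilon$ and $f$, so $\sigma_\ast$ depends only on $\varepsilon$, $f$ and $D$.
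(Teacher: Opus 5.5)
Your proof is correct and follows essentially the same route as the paper: you split at $|x|=\delta/\sigma$, use continuity of $f$ at $0$ on the inner region, and use the a priori bound $f(\sigma x)\geq \frac{1}{D(1+\sigma^2|x|^2)}$ on the outer region, choosing $\sigma$ so that $D\sigma^2$ is small and $\delta^2/\sigma^2$ is large. The only difference is cosmetic: you bound each of the two terms of $D(1-\varepsilon)(1+\sigma^2|x|^2)$ by $\tfrac12|x|^2$ and give $\sigma_\ast$ explicitly, whereas the paper, with $c=D(1-\varepsilon)$, checks $(1-c)+(1-c\sigma^2)|x|^2\geq 0$ at the threshold $|x|^2=\delta^2/\sigma^2$ and uses monotonicity in $|x|^2$.
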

\begin{proof}
    Let $c := D(1-\varepsilon)$. Since $f$ is continuous at $0$ and $f(0)=1$, there exists $\delta > 0$ such that
    \begin{align*}
        \inf_{|y|< \delta}f(y) \geq 1 - \varepsilon.
    \end{align*}
    Now, choose $\sigma_\ast \in (0,1)$ sufficiently small such that for all $\sigma \in (0,\sigma_\ast)$
    \begin{align}
        \label{eq:sigma_choice}
        c \sigma^2 < 1
        \qquad \text{and}\qquad1 + \frac{\delta^2}{\sigma^2} \geq c(1+\delta^2).
    \end{align}
    Next we split into two cases. The first case is when $|x| < \delta/\sigma$. Then $|\sigma x| < \delta$, so
    \begin{align*}
        f_\sigma(x)=f(\sigma x) \geq 1 - \varepsilon
        \geq \frac{1-\varepsilon}{1+|x|^2}.
    \end{align*}
    It remains to consider the other case $|x| \geq \delta/\sigma$. Using the assumed bound on $f$, we have
    \begin{align*}
        f(\sigma x) \geq \frac{1}{D(1+\sigma^2|x|^2)}.
    \end{align*}
    Thus, it suffices to show
    \begin{align*}
        \frac{1}{D(1+\sigma^2|x|^2)} \geq \frac{1-\varepsilon}{1+|x|^2} \iff (1-c) + (1-c\,\sigma^2)\,|x|^2 \geq 0.
    \end{align*}
    Note that $(1-c\,\sigma^2) \geq 0$ by \eqref{eq:sigma_choice} so we can estimate using $|x|^2 \geq \delta^2/\sigma^2$
    $$
    (1-c) + (1-c\,\sigma^2)\,|x|^2 \geq (1-c) + (1-c\,\sigma^2)\,\frac{\delta^2}{\sigma^2}  = 1 + \frac{\delta^2}{\sigma^2} - c\,(1+\delta^2)\geq 0
    $$
    by the second item in \eqref{eq:sigma_choice}. The proof is concluded.
\end{proof}

\subsection*{Acknowledgements}
JAC and JS were supported by the Advanced Grant Nonlocal-CPD (Nonlocal PDEs for Complex Particle Dynamics: Phase Transitions, Patterns and Synchronization) of the European Research Council Executive Agency (ERC) under the European Union’s Horizon 2020 research and innovation programme (grant agreement No. 883363). JAC was also partially supported by the EPSRC grant numbers EP/T022132/1 and EP/V051121/1. JW was supported by the Engineering and Physical Sciences Research Council (grant number EP/W524311/1)

\bibliographystyle{abbrv}
\bibliography{fastlimit}
\end{document}